\newtheorem{theorem}{Theorem}[section]
\newtheorem{resonance-theorem}[theorem]{Resonance Theorem}
\newtheorem{corollary}[theorem]{Corollary}
\newtheorem{lemma}[theorem]{Lemma}
\newtheorem{proposition}[theorem]{Proposition}
\newtheorem{remark}[theorem]{Remark}
\newcommand{\cri}{{\rm \mathsf{cr}}}
\newcommand{\ind}{{\rm ind}}
\newcommand{\nullity}{{\rm null}}
\newcommand{\nulli}{{\rm null}}
\newcommand{\cg}{\gamma}
\newcommand{\malpha}{\ol{\alpha}}
\newcommand{\ol}{\overline}
\newcommand{\R}{\mathbb{R}}
\newcommand{\beq}{\begin{equation}}
\newcommand{\eeq}{\end{equation}}
\newcommand{\beqn}{\begin{equation*}}
\newcommand{\eeqn}{\end{equation*}}
\newcommand{\group}{S}
\title{Resonance for loop homology of spheres}
\author{Nancy Hingston}
\address{Department of Mathematics and Statistics, College of New Jersey,
Ewing, New Jersey 08628, USA}
\email{hingston@tcnj.edu}
\author{Hans-Bert Rademacher}
\address{Mathematisches Institut, Universit{\"a}t Leipzig,
04081 Leipzig, Germany}
\email{rademacher@math.uni-leipzig.de}
\date{2011-05-04, revised: 2012-04-04}
\begin{document}
\begin{abstract}
A Riemannian or Finsler metric on a compact manifold $M$ gives rise to a
length function on the free loop space $\Lambda M$, whose critical points are
the closed geodesics in the given metric. If $X$ is a homology class on
$\Lambda M$, the \textquotedblleft minimax\textquotedblright\ critical level
$\cri(X)$ is a critical value. Let $M$ be a sphere of dimension
$>2$, and fix a metric $g$ and a coefficient field $G$. We prove that the limit
as $\deg(X)$ goes to infinity of $\cri(X)/\deg(X)$ exists. We call this limit
$\overline{\alpha}=\overline{\alpha}(M,g,G)$ the \textit{global mean
frequency} of $M.$ As a consequence we derive resonance statements
for closed geodesics on spheres; in particular either all homology on
$\Lambda$ of sufficiently high degreee lies hanging on closed geodesics of
mean frequency  (length/average index) $\ \overline{\alpha}$, or there is a
sequence of infinitely many closed geodesics whose mean frequencies converge
to $\overline{\alpha}$. \ The proof uses the Chas-Sullivan product and results
of Goresky-Hingston~\cite{GH}.
\end{abstract}
\keywords{closed geodesics, average index, mean frequency,
Chas-Sullivan product, free loop space, critical value of
homology classes}
\subjclass[2000]{53C22;58E10}
\thanks{Part of this work was carried out during the
inspiring
joint workshop on Geodesics sponsored by the American Institute of Mathematics
(AIM) and the Chern Institute of Mathematics (CIM),August 23-27, 2010,
at the Chern Institute of Mathematics, Nankai University, Tianjin China.
The first named author is grateful fot the support of the Institute for Advanced Study,
Princeton.
The second named author acknowledges the support by the
Deutsche Forschungsgemeinschaft (SPP 1154).\\
We are grateful to the referee for his suggestions}
\maketitle
\tableofcontents
\section{Introduction and Statement of Results}
\label{sec:introduction} 
Fix a Riemannian metric $g$ or a Finsler metric $f$
on a compact manifold $M.$ Hence the corresponding norm is given by $\Vert
v\Vert ^{2}=g(v,v)$ resp. $\Vert v\Vert =f(v).$ As common notation we use
the letter $g$ also for a Finsler metric. 
Let $\Lambda $ be the space of $H^{1}$maps $\gamma :S^{1}=\mathbb{R}/\mathbb{%
Z}\rightarrow M$, and let $F:\Lambda \rightarrow \mathbb{R}$ be the square
root of the energy function: 
\begin{equation*}
F(\gamma )=\left( \int_{0}^{1}\Vert \gamma ^{\prime }(t)\Vert ^{2}\right)
^{1/2}
\end{equation*}%
Then $F$ and the length function $l(\gamma )=\int_{0}^{1}\Vert \gamma
^{\prime }(t)\Vert \,dt$ agree on loops that are parameterized proportional
to arclength. Fix a coefficient group $G$. The \emph{critical level} of a
homology class $\ X\in H_{\ast }(\Lambda ;G)$ is defined to be%
\begin{eqnarray*}
\cri(X) &=&\cri_{g,G}(X)=\inf \left\{ a\in \mathbb{R}:\ X\text{
is supported on }\Lambda ^{\leq a}\right\} \\
&=&\inf \left\{ a\in \mathbb{R}:\ X\text{ is in the image of }H_{\ast }(%
\text{ }\Lambda ^{\leq a})\right\} ,
\end{eqnarray*}%
where $\Lambda ^{\leq a}\subset \Lambda $ is the space of loops $\gamma $
with $F(\gamma )$ $\leq a.$\ The critical level of a cohomology class $x\in
H^{\ast }(\Lambda )$ is defined to be 
\begin{eqnarray*}
\cri (x) &=&\cri_{g,G}(x)=\sup \left\{ a\in \mathbb{R}:\ x\text{
is supported on }\Lambda ^{\geq a}\right\} \\
&=&\sup \left\{ a\in \mathbb{R}:\ x\text{ is in the image of }H^{\ast
}(\Lambda ,\text{ }\Lambda ^{<a})\right\} .
\end{eqnarray*}
These are critical values of the function $F$ (unless $x=0\in H^{\ast
}(\Lambda )$, in which case $\cri(x)=\infty $). Since the critical
points of $F$ are closed geodesics on $M$, it follows that for each
nontrivial homology or cohomology class $X$, there is a closed geodesic of
length ${\cri}(X)$. 
\begin{theorem}
\label{thm:resonance} \,\textrm{\textsc{(Resonance Theorem)}} \ A Riemannian
or Finsler metric $g$ on $S^n, n>2$, and a field $G$ determine a \emph{%
global mean frequency } $\overline{\alpha}=:\overline{\alpha}_{g,G}>0$ with
the property that 
\begin{equation*}
\deg(X)-\overline{\alpha}\,\cri(X)
\end{equation*}
is bounded as $X$ ranges over all nontrivial homology and cohomology classes
on $\Lambda$ with coefficients in $G$.\newline
Thus the countably infinite set of points $\left({\cri}%
(X),\deg(X)\right)$ in the $(\ell,d)-$plane lies in bounded distance from
the line $d=\overline{\alpha}\ell.$
\end{theorem}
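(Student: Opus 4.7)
The plan is to exploit the algebraic structure carried by the loop space homology and cohomology of $S^n$ via the Chas--Sullivan product $\bullet$ on $H_\ast(\Lambda)$ and the Goresky--Hingston dual product on $H^\ast(\Lambda,\Lambda^{0})$, together with the compatibility of these products with the critical-level function $\cri$. The first ingredient I would record is the pair of key inequalities: for Chas--Sullivan,
\begin{equation*}
\deg(X\bullet Y)=\deg(X)+\deg(Y)-n,\qquad \cri(X\bullet Y)\le\cri(X)+\cri(Y),
\end{equation*}
and a dual \emph{super}additive inequality $\cri(x\circledast y)\ge\cri(x)+\cri(y)$ for the Goresky--Hingston cohomology product $\circledast$, with the corresponding additive degree formula. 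Both statements are essentially contained in~\cite{GH}.

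Second, I would invoke the explicit description of $H_\ast(\Lambda S^n;G)$ and $H^\ast(\Lambda S^n;G)$ to single out a distinguished class $\Theta$ of largest mean frequency, whose Chas--Sullivan powers $\Theta^{\bullet k}$ generate the loop homology modulo a finitely generated submodule; dually, a cohomology generator $\widehat\Theta$ plays the same role on the $H^\ast$ side. Applied to the sequence $a_k:=\cri(\Theta^{\bullet k})$, the subadditivity above together with Fekete's lemma gives existence of
\begin{equation*}
\overline{\alpha}^{\,-1}:=\lim_{k\to\infty}\cri(\Theta^{\bullet k})/k,
\end{equation*}
and since $\deg(\Theta^{\bullet k})$ is linear in $k$, this translates into convergence of $\deg/\cri$ along powers. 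Running the parallel superadditive argument on cohomology produces the same reciprocal limit, giving the global mean frequency $\overline{\alpha}>0$ claimed in the statement.

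The decisive third step is to upgrade ratio-convergence to the stronger assertion that $\deg(X)-\overline{\alpha}\,\cri(X)$ is uniformly bounded. Fekete alone only yields $a_k=k\overline{\alpha}^{\,-1}+o(k)$; to get $O(1)$ error one must combine the sub--additive estimate on homology with the super--additive estimate on cohomology, using the $H_\ast$--$H^\ast$ pairing (evaluation of a cohomology class on a homology class with matching total degree) to transfer the $\widehat\Theta$--bounds over to the homological side. This yields a two-sided sandwich forcing $a_k-k\overline{\alpha}^{\,-1}$ to be bounded. The finitely many residual generators contribute only an additive constant, and boundedness for all nontrivial $X$ follows.

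The main obstacle, as I see it, is precisely this last step. Existence of $\overline{\alpha}$ is essentially soft and classical once the products are in hand, but proving bounded distance---rather than mere convergence of the ratio---requires the sub-- and super--additive inequalities to be simultaneously tight on $\Theta$ and $\widehat\Theta$ up to a constant. This is where the special structure of $H^\ast(\Lambda S^n)$ for $n>2$ (a single asymptotically dominant polynomial generator on each of the homological and cohomological sides) must be used in an essential way; without it, the pinch argument would collapse.
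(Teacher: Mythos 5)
Your first two steps track the paper closely: the sub/superadditivity of $\cri$ under the products $\bullet$ and $\circledast$, the identification of the polynomial generators $\Theta\in H_{3n-2}(\Lambda)$ and $\omega\in H^{n-1}(\Lambda,\Lambda^0)$, Fekete's lemma applied to $\cri(\Theta^{\bullet m})$ and $\cri(\omega^{\circledast m})$, and the Kronecker-pairing duality $x\circledast\omega^{\circledast m}(Y\bullet\Theta^{\bullet m})=x(Y)$ which, via the Duality Lemma, makes $\cri(Y\bullet\Theta^{\bullet m})=\cri(y\circledast\omega^{\circledast m})$. All of that is correct and is exactly Lemmas~\ref{lem:A}, \ref{lem:B}, \ref{lem:powers}, \ref{lem:duality} of the paper.

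The gap is in your third step. Combining the subadditive estimate on the $\Theta$-side with the superadditive estimate on the $\omega$-side via the pairing gives you the sandwich
\begin{equation*}
\cri(\omega^{\circledast m})\;\le\;\cri(Y\bullet\Theta^{\bullet m})\;\le\;\cri(Y)+\cri(\Theta^{\bullet m}),
\end{equation*}
but this does \emph{not} pinch to $O(1)$: the lower bound grows like $m\,\mu^-$ and the upper bound like $m\,\mu^+$, where $\mu^\mp$ are the Fekete limits for $\omega$ and $\Theta$ respectively, and a priori $\mu^-<\mu^+$. The duality and the single-generator structure of $H_\ast(\Lambda S^n)$ by themselves only prove the \emph{Interval Theorem}~\ref{thm:interval}, i.e.\ that $\cri(X)/\deg(X)$ accumulates in $[\mu^-/2(n-1),\,\mu^+/2(n-1)]$. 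You say the inequalities "must be simultaneously tight up to a constant," but you give no mechanism forcing that. The paper's mechanism is the BV operator $\Delta$ coming from the $S^1$-action, together with the coefficient computation in Lemma~\ref{lem:C}: $\Delta(W\bullet\Theta^{\bullet m})=(2m+1)\Theta^{\bullet m}$ for $n$ odd (resp.\ $(mk+1)\Theta^{\bullet m}$ for $n$ even). Since $\Delta$ does not increase critical levels (Equation~\ref{eq:Delta}) and the coefficient cannot vanish mod~$p$ for two consecutive $m$, one gets $\cri(\Theta^{\bullet(m-1)})\le\cri(\omega^{\circledast(m+1)})$ for all $m$, which in one stroke forces $\mu^+\le\mu^-$ (hence equality, $\overline\mu$) and yields the two-sided $O(1)$ bound $(m-2)\overline\mu\le\cri(X)\le(m+2)\overline\mu+\cri(Y)$. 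Without that $\Delta$-bridge, linking the $\omega$-chain to the $\Theta$-chain at the level of critical values, the argument stops at the Interval Theorem.
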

We will derive an explicit bound for $\left\vert \deg (X)-\overline{\alpha }%
\,\cri(X)\right\vert $ in Section~\ref{sec:proof}. If the metric $g$
carries only finitely many closed geodesics, then for $\deg (X)$
sufficiently large, each homology class $X$ satisfies: 
\begin{equation*}
|\deg (X)-\overline{\alpha }\,\cri(X)|\leq n\,
\end{equation*}%
by Lemma~\ref{lem:X}. 
If $\gamma$ is a closed geodesic, then so is each of its \textit{iterates} $%
\gamma^{m}$: $\gamma^{m}(t)=\gamma(mt)$. The \textit{index} of $\gamma$ is
the index of the hessian of the functional $F$ on the free loop space, and
the \textit{average index }is 
$\alpha_{\gamma}=\lim_{m\rightarrow\infty}\ind(\gamma^{m})/m$. 
\begin{theorem}
\label{thm:density} \textrm{\textsc{(Density Theorem) }}\thinspace\ Let $%
\overline{\alpha }_{g,\mathbb{Q}}$ \ be the rational global mean frequncy \
of a Riemannian or Finsler metric $g$ on $S^{n}$, $n>2$. For any $%
\varepsilon >0$ we have the following estimate (compare Rademacher~\cite[%
Thm.1.2]{Rad95}) for the sum of inverted average indices $\alpha _{\gamma }$
of geodesics on $(S^{n},g):$ 
\begin{equation*}
\underset{\gamma }{{\textstyle\sum }}\,\frac{1}{\alpha _{\gamma }}\geq
\left\{ 
\begin{array}{ccc}
\frac{1}{n-1} & ; & n\mbox{ \rm odd } \\ 
\frac{1}{2(n-1)} & ; & n\mbox{ \rm  even}%
\end{array}%
\right.
\end{equation*}%
where we sum over a maximal set of prime, geometrically distinct closed
geodesics $\gamma $ whose mean frequency $\overline{\alpha }_{\gamma
}=:\alpha _{\gamma }/\ell (\gamma )$ satisfies: $\overline{\alpha }_{\gamma
}\in \left( \overline{\alpha }_{g,\mathbb{Q}}-\varepsilon ,\overline{\alpha }%
_{g,\mathbb{Q}}+\varepsilon \right) .$
\end{theorem}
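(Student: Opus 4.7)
The plan is to combine the Resonance Theorem with Morse--Bott theory on $\Lambda S^n$ and the explicit rational cohomology of the free loop space, following the method of Rademacher~\cite{Rad95}.

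Fix a large $L > 0$.  By Theorem~\ref{thm:resonance}, every basis element $X$ of $H_*(\Lambda S^n;\mathbb{Q})$ satisfies $|\deg(X) - \overline{\alpha}_{g,\mathbb{Q}}\,\cri(X)| \leq K$ for a fixed constant $K = K(g,\mathbb{Q})$; hence the number of basis classes with $\cri(X) \leq L$ is at least $B(\overline{\alpha}_{g,\mathbb{Q}} L) - O(1)$, where $B(D) := \sum_{k \leq D} \dim H_k(\Lambda S^n;\mathbb{Q})$.  A direct computation from the Vigu\'e-Poirrier--Sullivan minimal model yields $B(D)/D \to 2/(n-1)$ for $n$ odd and $B(D)/D \to 1/(n-1)$ for $n$ even; in each parity the nonzero Betti numbers occur in exactly two arithmetic progressions of common difference $n-1$ (respectively $2(n-1)$).

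On the Morse-theoretic side, each such basis class is supported by some critical orbit $S^1 \cdot \gamma^m$ with $m\ell(\gamma) \leq L$, contributing only to cohomological degrees in the interval $[\ind(\gamma^m),\, \ind(\gamma^m) + \nullity(\gamma^m) + 1]$.  The Bott iteration formula gives $|\ind(\gamma^m) - m\alpha_\gamma| \leq n-1$ and $\nullity(\gamma^m) \leq 2(n-1)$, so combined with the Resonance bound the support condition $|\deg - \overline{\alpha}_{g,\mathbb{Q}}\cri| \leq K$ forces $|\overline{\alpha}_\gamma - \overline{\alpha}_{g,\mathbb{Q}}| < \varepsilon$ once $m\ell(\gamma)$ is large enough.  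Using that the local Morse--Bott contribution of a nondegenerate $S^1$-orbit has dimension $2$ (the top and bottom generators of the orbit), one arrives at the counting inequality
\begin{equation*}
B(\overline{\alpha}_{g,\mathbb{Q}} L) - O(1) \;\leq\; 2 \sum_{\gamma:\, |\overline{\alpha}_\gamma - \overline{\alpha}_{g,\mathbb{Q}}| < \varepsilon} \lfloor L/\ell(\gamma) \rfloor .
\end{equation*}

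Dividing by $L$, letting $L \to \infty$, and substituting $1/\ell(\gamma) = \overline{\alpha}_\gamma/\alpha_\gamma$ with $\overline{\alpha}_\gamma \to \overline{\alpha}_{g,\mathbb{Q}}$ recovers the claimed lower bound on $\sum_\gamma 1/\alpha_\gamma$, the parity dependence being precisely the factor of two difference in Betti growth rates.  The main obstacle will be maintaining the per-iterate bound of $2$ when degenerate iterates are present --- the nullity can saturate at $2(n-1)$ on, for example, the round sphere.  The standard resolution is to perturb to a nearby bumpy Finsler metric $g_\delta$ for which all iterates are nondegenerate, deduce the inequality there, and pass to the limit using continuity of the average indices $\alpha_\gamma$ and of $\overline{\alpha}_{g,\mathbb{Q}}$ under variation of the metric.
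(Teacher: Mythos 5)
Your proposal gets the right answer and correctly identifies all the numerical ingredients (the Betti number growth rates $2/(n-1)$ for $n$ odd, $1/(n-1)$ for $n$ even; the factor of $2$ from the top and bottom of an $S^1$-orbit; the Bott estimate), but it uses a genuinely different counting strategy than the paper, and that strategy has a gap at the perturbation step.

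You count \emph{basis homology classes} with $\cri(X)\leq L$, bound them below by $B(\overline\alpha L)-O(1)$ via the Resonance Theorem, and above by $2\sum_\gamma\lfloor L/\ell(\gamma)\rfloor$ via a Morse--Bott inequality. The dimension-$2$ bound for each orbit is only valid when the critical orbit is nondegenerate, so you pass to a bumpy metric and try to take a limit. This is where the argument breaks. When $g_\delta\to g$, several geometrically distinct prime geodesics $\gamma'_1,\dots,\gamma'_k$ of $g_\delta$ may converge to a single degenerate prime geodesic $\gamma$ of $g$; by continuity of the average index (property (i) in Section~\ref{sec:continuity}), $\sum_i 1/\alpha_{\gamma'_i}\to k/\alpha_\gamma$. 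Thus the bumpy inequality $\sum_{\gamma'}1/\alpha_{\gamma'}\geq c-o(1)$ only yields $\sum_\gamma m_\gamma/\alpha_\gamma\geq c$ in the limit, with unknown multiplicities $m_\gamma\geq 1$. This is an overestimate of the quantity you want to bound from below, so the inequality does not transfer to $g$. (There is also no a priori uniform bound on $m_\gamma$ or on the number of new geodesics of $g_\delta$ entering the $\varepsilon$-window.)

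The paper's proof circumvents this by counting a different quantity: $N(d)$, the number of \emph{distinct critical levels} of rational classes of degree $\leq d$, \emph{in the original metric $g$}. This is bounded above by $\sum_\gamma\#\{m:\ind(\gamma^m)\leq d\}\approx\sum_\gamma d/\alpha_\gamma$ regardless of degeneracy, because each isolated orbit $S^1\cdot\gamma^m$ contributes at most one level, not "at most dimension 2 of local homology." The bumpy approximation is used only to establish, for a fixed class $X$, the existence of a geodesic $\gamma_X$ in $g$ with $\ind(\gamma_X)\leq d\leq\ind+\nulli(\gamma_X)+1$; no global sum is transferred through the limit. The lower bound on $N(d)$ then does not come from raw Betti-number growth but from the loop product structure: for $n$ even, $\cri(W\bullet\Theta^{\bullet m})=\cri(\omega^{\circledast(m+1)})$ is strictly increasing by the Duality Lemma~\ref{lem:duality} and Equation~\ref{eq:XY}; for $n$ odd, $\sigma=A\bullet\Delta$ has degree $-(n-1)$ and $\cri(\sigma X)<\cri(X)$ strictly when the relevant critical sets are isolated. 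These give $N(d)\gtrsim d/(n-1)$ (odd) or $d/(2(n-1))$ (even) without any perturbation. To repair your approach along your lines, you would need a uniform bound on the local homology rank of an isolated degenerate iterate, or a way to rule out the multiplicity blow-up; the paper's switch from counting classes to counting levels, together with the product-theoretic strictness of critical levels, is precisely what removes the need for either.
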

For each closed geodesic $\gamma$, 
\begin{equation*}
\lim_{d\to \infty}
\frac{\#\{m\ge 1\,;\, \ind(\gamma^m)\le d\}}{d}=\frac{1}{\alpha_{\gamma}}.
\end{equation*}
Thus $1/\alpha_{\gamma}$ represents the density, in units of geometrically
distinct closed geodesics per degree, of all the iterates of $\gamma$. \
Since $\gamma$ and all its iterates have the same mean frequency, the sum in
the \textsc{Density Theorem }~\ref{thm:density} represents the total density, in these units,
of all the iterates of all the closed geodesics whose mean frequency lies in
the given interval. 
\begin{corollary}
\label{cor:pinch} Let $n$ be odd. Let $g$ be a Riemannian metric on the $n$%
-sphere $S^{n}$ whose sectional curvature $K$ satisfies 
\begin{equation*}
\frac{1}{4}<K\leq1
\end{equation*}
or let $g$ be a non-reversible Finsler metric with reversibility $\lambda>1$
(cf.~\cite{Rad04}) whose flag curvature $K$ satisfies: 
\begin{equation*}
\left( 1-\frac{1}{\lambda+1}\right) ^{2}<K\leq1.
\end{equation*}
Then at least one of the following holds:
\begin{itemize}
\item[(i)] There are at least two closed geodesics $\gamma$ with mean
frequency $\overline{\alpha}_{\gamma}$ equal to $\overline{\alpha}_{g,%
\mathbb{Q}}.$
\item[(ii)] There is a sequence of prime closed geodesics $\{\gamma_{j}\},$
with mean frequencies $\overline{\alpha}_{j}\neq\overline {\alpha}_{g,%
\mathbb{Q}}$ satisfying 
\begin{equation*}
\underset{j\rightarrow\infty}{\lim}\overline{\alpha}_{j}= \overline{\alpha}%
_{g,\mathbb{Q}}.
\end{equation*}
\end{itemize}
\end{corollary}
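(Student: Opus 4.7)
The plan is to argue by contradiction using the Density Theorem~\ref{thm:density} together with a classical lower bound on the average index that the curvature pinching hypothesis provides. Suppose that neither (i) nor (ii) holds. Then the failure of (ii) produces an $\varepsilon>0$ such that no prime closed geodesic $\gamma$ satisfies $\overline{\alpha}_{\gamma}\in(\overline{\alpha}_{g,\mathbb{Q}}-\varepsilon,\overline{\alpha}_{g,\mathbb{Q}}+\varepsilon)\setminus\{\overline{\alpha}_{g,\mathbb{Q}}\}$, while the failure of (i) forces there to be at most one prime closed geodesic $\gamma_0$ with mean frequency exactly $\overline{\alpha}_{g,\mathbb{Q}}$. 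Applying Theorem~\ref{thm:density} with $G=\mathbb{Q}$ and this $\varepsilon$, the sum on the left-hand side collapses to at most the single term $1/\alpha_{\gamma_0}$; since the right-hand side $1/(n-1)$ is strictly positive for odd $n$, such a $\gamma_0$ must exist, and it satisfies $\alpha_{\gamma_0}\le n-1$.

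The next step is to contradict $\alpha_{\gamma_0}\le n-1$ using the pinching. For a Riemannian metric with $1/4<K\le 1$ on $S^n$, Klingenberg's injectivity radius theorem gives $\ell(\gamma)\ge 2\pi/\sqrt{K_{\max}}$ for every prime closed geodesic $\gamma$, while Rauch comparison places the first conjugate point along $\gamma$ at distance at most $\pi/\sqrt{K_{\min}}$, with multiplicity $n-1$ for the comparison sphere. Counting conjugate points in the iterate $\gamma^m$ via the Morse index theorem, dividing by $m$, and letting $m\to\infty$ yields
\begin{equation*}
\alpha_{\gamma}\;\ge\;(n-1)\,\frac{\ell(\gamma)\sqrt{K_{\min}}}{\pi}\;\ge\;2(n-1)\sqrt{K_{\min}/K_{\max}}\;>\;n-1,
\end{equation*}
the last strict inequality coming from the pinching $K_{\min}/K_{\max}>1/4$. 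In the Finsler case the same scheme applies, now combining Rauch comparison for the flag curvature with the injectivity radius estimate for non-reversible Finsler metrics of reversibility $\lambda$ from~\cite{Rad04}; the pinching constant $(1-1/(\lambda+1))^{2}$ is chosen precisely so that this calculation still produces $\alpha_{\gamma}>n-1$ for every prime closed geodesic. In either case $\alpha_{\gamma_0}>n-1$, contradicting the conclusion of the first step, so (i) or (ii) must hold.

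The main obstacle, and the place where the precise pinching constants in the hypothesis enter, is the index estimate $\alpha_{\gamma}>n-1$: in the Finsler setting one must carefully balance the lower flag curvature bound against the Rademacher injectivity estimate so that the strict inequality survives with no wasted slack. Once that estimate is in hand, the rest of the argument is a short and direct contradiction with the Density Theorem.
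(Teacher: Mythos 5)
Your proposal is correct and follows essentially the same route as the paper: show that the pinching forces every prime closed geodesic to have average index $\alpha_\gamma > n-1$, and then observe that if (i) and (ii) both failed the Density Theorem~\ref{thm:density} (applied with a suitable small $\varepsilon$) would force some $\alpha_{\gamma_0}\le n-1$, a contradiction. The only difference is cosmetic: the paper simply cites \cite{Rad04} for the length lower bound $\ell(\gamma)\ge\pi(1+\lambda^{-1})$ and \cite{Rad07} for $\alpha_\gamma>n-1$, whereas you re-derive the index bound from Klingenberg plus Sturm/Rauch comparison, using the estimate $\overline{\alpha}_\gamma\ge (n-1)\sqrt{K_{\min}}/\pi$ that is also Lemma~\ref{lem:estimate} of the paper. (Your phrase ``with multiplicity $n-1$'' is slightly loose — the first conjugate point in $M$ need not itself have full multiplicity — but the count is correct once one applies Sturm comparison in each of the $n-1$ directions separately, which is exactly how Lemma~\ref{lem:estimate} is proved.)
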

Using a Killing field $V$ on the standard sphere one can define an 
one parameter family $f_{\varepsilon}$, $\varepsilon\in \lbrack0,1)$ of
Finsler metrics of constant flag curvature on the sphere $S^{n}$ with the
following properties: For $\varepsilon=0$ the metric is the standard
Riemannian metric. For $\varepsilon\in(0,1)$ the metric is a non-reversible
Finsler metric of reversibility $\lambda=(1+\varepsilon)/(1-\varepsilon)$.
The geodesic flow is the composition of the geodesic flow of the standard
metric and the flow of the Killing field $\varepsilon V.$ For $\varepsilon$
irrational there are only finitely many closed geodesics, all of which are
nondegenerate. These metrics were first introduced by Katok, cf.~\cite{Zi}, 
\cite[thm.5]{Rad95}. The number of geometrically distinct distinct closed
geodesics is\ $n$ if $n$ is even and $n+1$ if $n$ is odd. In the following
Corollary we present properties of metrics nearby a Katok metric on $S^n:$ 
\begin{corollary}
\label{cor:Katok} Let $n$ be odd. \ Let $S^{n}$ carry a Katok metric $g_{0}$
of constant flag curvature $1$ and let $U\subset\Lambda$ be a neighborhood
of the set of closed geodesics on $\left(S^{n},g_{0}\right).$ Let $N\in%
\mathbb{Z}.$ There is a neighborhood $\mathcal{W}$ of $g_{0}$ in the space
of metrics so that for every $g\in\mathcal{W},$ at least one of the
following is true:
\begin{itemize}
\item[(i)] \ There are at least two closed geodesics $\gamma \subset U$ with
mean frequency $\overline{\alpha}_{\gamma}$ equal to $\overline{\alpha}_{g,%
\mathbb{Q}}.$
\item[(ii)] \ There is a sequence of prime closed geodesics $\{\gamma
_{j}\}, $ with mean frequencies 
$\overline{\alpha }_{j}\neq \overline{\alpha}_{g,\mathbb{Q}}$ satisfying 
\begin{equation*}
\underset{j\rightarrow \infty }{\lim }\overline{\alpha }_{j}=\overline{%
\alpha }_{g,\mathbb{Q}}.
\end{equation*}
\item[(iii)] \ There are at least $N$ closed geodesics with mean frequency
equal to $\overline{\alpha}_{g,\mathbb{Q}}.$
\end{itemize}
\end{corollary}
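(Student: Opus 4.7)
The plan is to argue by contradiction, combining Corollary~\ref{cor:pinch} with a compactness argument that exploits the rigidity of the Katok metric. Suppose no such neighborhood $\mathcal{W}$ exists; then a sequence $g_k\to g_0$ in the $C^\infty$-topology violates all three alternatives (i), (ii), (iii) for every $g_k$. Since $g_0$ has constant flag curvature $1$ and finite reversibility $\lambda_0=(1+\varepsilon)/(1-\varepsilon)$, the inequality $(1-1/(\lambda_0+1))^{2}<1$ is strict, so by continuous dependence of flag curvature and reversibility on $g$ in $C^{2}$, the pinching hypothesis of Corollary~\ref{cor:pinch} holds for $g_k$ with $k$ large. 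Alternative (ii) of Corollary~\ref{cor:pinch} coincides verbatim with (ii) of the present statement and is excluded by assumption, so (i) of Corollary~\ref{cor:pinch} holds: each $g_k$ admits at least two geometrically distinct prime closed geodesics with mean frequency equal to $\overline{\alpha}_{g_k,\mathbb{Q}}$.

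By the assumed failure of (iii), the number of such primes is at most $N-1$; by the assumed failure of (i), at most one of them lies in $U$. Hence there is a prime $\gamma_k$ of $g_k$ with $\overline{\alpha}_{\gamma_k}=\overline{\alpha}_{g_k,\mathbb{Q}}$ and $\gamma_k\notin U$. The plan is then to bound the lengths $\ell(\gamma_k)$ uniformly so that, along a subsequence, $\gamma_k\to\gamma_\infty$ in $H^{1}$ where $\gamma_\infty$ is a closed geodesic of $g_0$; since $U$ is an open neighborhood of the closed geodesics of $g_0$, this forces $\gamma_k\in U$ for large $k$, contradicting $\gamma_k\notin U$.

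The uniform length bound is the crux. Failure of (ii) provides $\epsilon_k>0$ such that no prime of $g_k$ has mean frequency in $(\overline{\alpha}_{g_k,\mathbb{Q}}-\epsilon_k,\overline{\alpha}_{g_k,\mathbb{Q}}+\epsilon_k)$ outside the distinguished value, so applying the Density Theorem~\ref{thm:density} to $g_k$ with $\varepsilon=\epsilon_k$ gives a density sum $\sum 1/\alpha_\gamma\geq 1/(n-1)$ over at most $N-1$ terms; hence some prime in the set satisfies $\alpha_\gamma\leq(n-1)(N-1)$. Combined with the convergence $\overline{\alpha}_{g_k,\mathbb{Q}}\to\overline{\alpha}_{g_0,\mathbb{Q}}>0$ (which I would derive from the definition of the global mean frequency via continuity of $\cri(X)$ in $g$ and the bounded-distance statement of the Resonance Theorem~\ref{thm:resonance}) this yields a length bound on \emph{some} prime in the special set. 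The main obstacle is that this prime may fail to be $\gamma_k$. To close the argument one observes that any prime of $g_k$ in the special set that lies in $U$ is, for small perturbation, close in $H^{1}$ to one of the finitely many Katok primes and therefore has $\alpha_\gamma$ bounded by a constant depending only on $g_0$; subtracting this one (bounded) contribution from the density sum should show that the primes outside $U$ together still account for a positive fraction of $1/(n-1)$, forcing one of them to have bounded $\alpha_\gamma$ and thus bounded length. Controlling this ``absorption'' by the single inside-$U$ prime is the main technical step.
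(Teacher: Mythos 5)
Your approach --- arguing by contradiction with a sequence $g_k\to g_0$ that violates all three alternatives, then extracting a limiting closed geodesic of $g_0$ lying outside $U$ --- is a genuinely different route from the paper's. The paper argues forwards: it picks $D\geq N(2n-2)$, then (using the positive lower curvature bound on a $C^2$-neighborhood of $g_0$ together with Lemma~\ref{lem:estimate}) finds $L$ and $\mathcal{W}_0\ni g_0$ so that every closed geodesic of length $>L$ for $g\in\mathcal{W}_0$ has $\alpha_\gamma\geq D$, and then shrinks to $\mathcal{W}\subset\mathcal{W}_0$ so that every closed geodesic of length $\leq L$ for $g\in\mathcal{W}$ lies in $U$ (using that $g_0$ has only finitely many closed geodesics below any fixed energy). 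One then runs the Density Theorem~\ref{thm:density} exactly as in the proof of Corollary~\ref{cor:pinch}: if (ii) and (iii) fail, the density sum $\geq 1/(n-1)$ is over $\leq N-1$ resonant primes, those outside $U$ contribute $\leq (N-1)/D$ by construction, and the choice of $D$ forces the remainder to be carried by at least two primes in $U$. Your contradiction framing ends up fighting the same quantitative battle, but indirectly, and at the cost of having to re-establish the uniform estimates along the sequence rather than building them into the choice of $\mathcal{W}$.

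The gap you flag is real, and your proposed repair is aimed in the wrong direction. You say that the resonant prime inside $U$ ``has $\alpha_\gamma$ bounded by a constant depending only on $g_0$.'' An upper bound on $\alpha_\gamma$ is of no use here --- it only enlarges $1/\alpha_\gamma$ and could let that single term consume the entire $1/(n-1)$ budget, leaving nothing for the outside-$U$ primes. What is actually needed is a \emph{lower} bound on $\alpha_\gamma$ strictly above $n-1$, uniform over $g_k$ for $k$ large, so that the inside-$U$ prime contributes at most $\tfrac{1}{(n-1)(1+c)}$ for a fixed $c>0$. This uniform lower bound does hold: the strict pinching satisfied by $g_0$ persists under $C^2$-perturbation, and as in the proof of Corollary~\ref{cor:pinch} (via~\cite{Rad04} and~\cite{Rad07}) it yields $\alpha_\gamma\geq (n-1)(1+c)$ for every closed geodesic of $g_k$, $k$ large. (The continuity of $(g,\gamma)\mapsto \alpha_\gamma$, recorded as property (i) in Section~\ref{sec:continuity}, gives the same thing via closeness to a Katok prime.) With that in hand your subtraction works: the outside-$U$ resonant primes carry density $\geq \tfrac{c}{(n-1)(1+c)}$ across $\leq N-2$ terms, one of them has $\alpha_\gamma\leq (N-2)(n-1)(1+c)/c$, and since $\overline{\alpha}_{g_k,\mathbb{Q}}\to\overline{\alpha}_{g_0,\mathbb{Q}}>0$ (this is Lemma~\ref{lem:continuity}; you do not need to rederive it) its length is uniformly bounded. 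Note also that the density estimate bounds \emph{some} outside-$U$ resonant prime, not the particular $\gamma_k$ you first selected; you must replace $\gamma_k$ by that one before invoking compactness. With these two adjustments your argument closes, but the paper's direct construction of $\mathcal{W}$ is noticeably leaner.
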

In the Katok metrics the set of prime closed geodesics is compact, and all
closed geodesics $\gamma$ have the same mean frequency
$\overline{\alpha}_{\gamma}=\overline{\alpha}_{g_{0},\mathbb{Q}}$. 
Thus the Katok metrics are 
\textit{perfectly resonant metrics}. Corollary~\ref{cor:Katok} shows that it
is difficult to pry the mean frequencies apart! But we will also prove the
following:
\begin{theorem}
\textrm{\textsc{(Open mapping theorem)}} \label{thm:open_mapping_theorem}
Let $M$ be a compact manifold. Let $\mathcal{G}=\mathcal{G}^{r}(M)$ be the
set of $C^{r} $ Riemannian or Finsler metrics on $M$, with $r\geq 2$ for a
Riemannian metric and $r\ge 4$ for a Finsler metric. Let 
\begin{align*}
\mathcal{G}_{j} =\{(g,\gamma_{1},\gamma_{2},..,\gamma_{j}):g\in \mathcal{G}%
\text{ and } \text{each }&\gamma_{i}\text{ is a geodesic} \\
& \text{in the metric }g\} \subset\mathcal{G}\times\Lambda^{j}
\end{align*}
The map 
\begin{equation*}
\Phi_{j}:\mathcal{G}_{j}\longrightarrow \mathbb{R}^{j}
\end{equation*}
by $\Phi_{j}(g,\gamma_{1},\gamma_{2},..,\gamma_{j})=(\overline {\alpha}_{1},%
\overline{\alpha}_{2},...\overline{\alpha}_{j})$, where $\overline{\alpha}%
_{i}$ is the mean frequency of the geodesic $\gamma_{i}$ in the metric $g$,
is an open mapping at each point $(g,\gamma_{1},\gamma_{2},..,\gamma_{j})$
where the $\gamma_{i}$ are geometrically distinct and of positive length.
That is, at each such point the image of any open set containing the point $%
(g,\gamma_{1},\gamma_{2},..,\gamma_{j})$ contains a neighborhood of $%
\Phi_{j}(g,\gamma_{1},\gamma_{2},..,\gamma_{j}).$
\end{theorem}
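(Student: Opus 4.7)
The plan is to construct, at a point $(g,\gamma_1,\ldots,\gamma_j)\in\mathcal{G}_j$ satisfying the hypotheses, a smooth $j$-parameter family of metric deformations that moves the $j$ mean frequencies independently; openness of $\Phi_j$ then follows from the inverse function theorem.

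\emph{Localization.} Because the $\gamma_i$ are geometrically distinct and of positive length, their images in $M$ are distinct: two closed geodesics of positive length that agreed along a nontrivial arc would coincide by uniqueness of the geodesic ODE, so any two of the images intersect only in a discrete set. Hence for each $i$ one can choose an open arc $A_i$ of $\gamma_i$ that is traversed only once by $\gamma_i$ and whose closure misses the images of all $\gamma_k$ with $k\neq i$, together with a thin tubular neighborhood $U_i$ of $A_i$ in $M$, such that the $U_i$ are pairwise disjoint and $U_i\cap\mathrm{image}(\gamma_k)=\emptyset$ for every $k\neq i$. I would then introduce Fermi coordinates along $\gamma_i$ on each $U_i$.

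\emph{Tailored perturbations.} For each $i$ I would pick a compactly supported metric deformation $h_i$ on $U_i$ that vanishes to second order along $\gamma_i$ (in the Riemannian case, a symmetric $2$-tensor with $h_i=0$ and $\nabla h_i=0$ on $\gamma_i$; the Finsler case is analogous, with the extra derivative requirements built into the $r\ge 4$ hypothesis). For $s=(s_1,\ldots,s_j)$ small, $g_s:=g+s_1h_1+\cdots+s_jh_j$ is again a metric in $\mathcal{G}$. Because $h_k$ is supported in $U_k$, hence away from $\gamma_i$ for $k\neq i$, and because $h_i$ vanishes to second order on $\gamma_i$, the metric $g_s$ agrees with $g$ to first order along every $\gamma_i$. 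Consequently each $\gamma_i$ remains a geodesic of $g_s$ with its original parametrization, and the length $\ell(\gamma_i)$ is unchanged. On the other hand, the second normal derivatives of $h_i$ along $\gamma_i$ can be prescribed freely, and they determine the change in the curvature operator $R(\,\cdot\,,\dot\gamma_i)\dot\gamma_i$ over the arc $A_i$. I would choose $h_i$ so that this induced curvature change is a nonnegative bump times the identity on the normal bundle along $A_i$; then the Morse index form on each iterate $\gamma_i^m$ decreases under $s_i>0$, so $\ind(\gamma_i^m)$ is weakly increasing, and from the expression of the average index as a sum of rotation numbers of the Poincar\'e map (via Bott's iteration formula) one reads off
\begin{equation*}
\frac{\partial\,\alpha_{\gamma_i}(g_s)}{\partial s_i}\bigg|_{s=0}>0.
\end{equation*}
Since $\ell(\gamma_i)$ is unaltered and $h_k$ is supported away from $\gamma_i$ for $k\neq i$, one then has $(\partial/\partial s_i)\overline\alpha_{\gamma_i}(g_s)|_{s=0}\neq 0$ while $(\partial/\partial s_k)\overline\alpha_{\gamma_i}(g_s)|_{s=0}=0$ for $k\neq i$.

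\emph{Openness and main obstacle.} The map $s\mapsto\Phi_j(g_s,\gamma_1,\ldots,\gamma_j)=(\overline\alpha_{\gamma_1}(g_s),\ldots,\overline\alpha_{\gamma_j}(g_s))$ thus has diagonal, invertible Jacobian at $s=0$ and is therefore a local diffeomorphism $\mathbb{R}^j\to\mathbb{R}^j$. The curve $s\mapsto(g_s,\gamma_1,\ldots,\gamma_j)$ lies in $\mathcal{G}_j$ and sits inside any prescribed neighborhood of $(g,\gamma_1,\ldots,\gamma_j)$ for $s$ small, so the image of that neighborhood under $\Phi_j$ contains a neighborhood of $\Phi_j(g,\gamma_1,\ldots,\gamma_j)$. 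The main obstacle in this plan is the nondegeneracy claim $(\partial/\partial s_i)\alpha_{\gamma_i}(g_s)|_{s=0}\neq 0$: one must verify that a well-chosen second-order normal perturbation of the metric actually produces a strictly nonzero first variation of the average index, which requires combining the monotonicity of the Morse index form under curvature increases with the formula expressing the average index in terms of the rotation numbers of the Poincar\'e map of $\gamma_i$. Everything else is a routine combination of Fermi coordinates, disjointness of supports, and the inverse function theorem.
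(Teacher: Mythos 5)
Your overall framework—localize each $\gamma_i$ in a tubular neighborhood $U_i$ disjoint from the other geodesics, perturb each $U_i$ independently so that the Jacobian of $s\mapsto\Phi_j$ is diagonal—matches the paper's strategy, and you have correctly located the crux. But the claim you need, $\partial\alpha_{\gamma_i}(g_s)/\partial s_i|_{s=0}>0$ under a localized curvature increase, is \emph{false} in general, and this is exactly the difficulty the paper does not paper over. The average index is the integral of a $\mathbb{Z}$-valued intersection number $\Lambda(z)$ over the unit circle (Bott); a small curvature increase moves the associated $+$-curve in the symplectic group but need not cross any of the cycles $B_z$. Concretely, if the Poincar\'e map of $\gamma_i$ is hyperbolic (no eigenvalues on the unit circle), then $\alpha_{\gamma_i}$ is an integer and locally constant, so your proposed Jacobian entry is $0$ no matter how you choose $h_i$. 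Monotonicity of the index form under curvature increase gives only weak monotonicity of $\alpha$, not strict.

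The paper's proof (Section~\ref{sec:perturbation}) handles this by a dichotomy rather than a derivative. Lemma~\ref{lem:eins} and Lemma~\ref{lem:zwei} produce a $+$-curve $P(s)$ of Poincar\'e maps under a curvature-increasing perturbation (your step). Lemma~\ref{lem:drei} then says: either $\alpha_{\gamma_i}$ is strictly increasing on some $[0,\varepsilon)$, or $P(s)$ becomes hyperbolic for $s\in(0,\varepsilon)$. In the first case you are done as you hoped. In the second case $\alpha_{\gamma_i}$ is locked (locally constant), and the paper switches to a second, independent knob: Lemma~\ref{lem:drei3} constructs a localized perturbation that changes only the \emph{length} of $\gamma_i$ while keeping parallel transport and hence the Poincar\'e map (and hence the now-locally-constant average index) fixed. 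Since $\overline\alpha_{\gamma_i}=\alpha_{\gamma_i}/\ell(\gamma_i)$, this still moves $\overline\alpha_{\gamma_i}$ monotonically. So the Perturbation Theorem~\ref{theorem:perturbation} delivers, for each $\gamma_i$ and each sign, a one- or two-parameter family supported in $U_i$ along which $\overline\alpha_{\gamma_i}$ strictly increases (resp.\ decreases) while the other $\overline\alpha_{\gamma_k}$ are untouched; openness of $\Phi_j$ follows from these $j$ independent monotone paths rather than from the inverse function theorem. Your write-up should replace the nondegenerate-Jacobian argument with this two-case construction, or else prove the strict-increase claim only after ruling out the hyperbolic case and provide a length-perturbation fallback for that case.
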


This theorem supports our intuition that the mean frequencies of
geometrically distinct closed geodesics can be perturbed independently. The
argument of Brian White in \cite{W} \ shows that the space $\mathcal{G}_{j}$
is a Banach manifold, and that the projection $\mathcal{G}_{j}\rightarrow 
\mathcal{G}$ is a smooth map of Fredholm index $0$.


\begin{lemma}
\label{lem:continuity} \textrm{\textsc{(Continuity) }} Fix a field $G$. Let $%
M=S^{n}$ and $r\geq 2$. \ The \ map $\mathcal{G}\rightarrow \mathbb{R}$
given by $g\mapsto \overline{\alpha }_{g,G}$ is continuous.
\end{lemma}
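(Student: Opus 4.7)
The plan is to exploit the fact that the critical level $\cri_{g,G}(X)$ scales in a controlled way under pointwise comparison of the metrics, and then to pass this estimate through the limit defining $\overline{\alpha}_{g,G}$.

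First I would establish a uniform perturbation estimate for $\cri$. Suppose two metrics $g,g'$ on $S^n$ satisfy
\[
(1-\varepsilon)\|v\|_{g} \ \le\ \|v\|_{g'} \ \le\ (1+\varepsilon)\|v\|_{g}
\]
pointwise for every tangent vector $v$. Then the square-root-of-energy functionals obey $(1-\varepsilon) F_{g}(\gamma) \le F_{g'}(\gamma) \le (1+\varepsilon) F_{g}(\gamma)$ for every $\gamma \in \Lambda$, so for each $a\ge 0$
\[
\Lambda^{\le a/(1+\varepsilon)}_{g} \ \subset\ \Lambda^{\le a}_{g'} \ \subset\ \Lambda^{\le a/(1-\varepsilon)}_{g}.
\]
From the definition of critical level this yields, for every nonzero $X\in H_*(\Lambda;G)$,
\[
(1-\varepsilon)\,\cri_{g,G}(X)\ \le\ \cri_{g',G}(X)\ \le\ (1+\varepsilon)\,\cri_{g,G}(X).
\]
Because the $C^r$ topology with $r\ge 2$ controls norms pointwise (and similarly for Finsler functions written in terms of $\|v\|=f(v)$, where no reversibility is used), any prescribed $\varepsilon>0$ is realised on a $C^r$-neighborhood $\mathcal{W}$ of $g$ in $\mathcal{G}$.

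Second, I would pass to the limit. By the Resonance Theorem~\ref{thm:resonance}, the limit $\overline{\alpha}_{g,G} = \lim_{\deg(X)\to\infty}\deg(X)/\cri_{g,G}(X)$ exists and is positive, and the analogous limit exists for every $g'\in\mathcal{W}$. Pick any sequence $(X_{j})$ of nontrivial homology classes on $\Lambda$ with $\deg(X_{j})\to\infty$; note also that the Resonance Theorem forces $\cri_{g,G}(X_{j})\to\infty$. The perturbation estimate above gives
\[
\frac{\deg(X_{j})}{\cri_{g',G}(X_{j})} \ \in\ \left[\frac{1}{1+\varepsilon},\ \frac{1}{1-\varepsilon}\right]\cdot \frac{\deg(X_{j})}{\cri_{g,G}(X_{j})}
\]
for every $j$. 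Letting $j\to\infty$ yields
\[
\overline{\alpha}_{g',G}\ \in\ \left[\frac{1}{1+\varepsilon},\ \frac{1}{1-\varepsilon}\right]\cdot \overline{\alpha}_{g,G},
\]
and letting $\varepsilon\to 0$ gives the continuity of $g\mapsto \overline{\alpha}_{g,G}$.

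The main obstacle is less substantive than organizational. The essential input is the Resonance Theorem, which guarantees that the limit defining $\overline{\alpha}$ exists for every metric in the neighborhood, so that the two-sided squeeze above can be taken. The only mildly delicate point is that the pointwise norm comparison in step one is genuinely uniform over all of $\Lambda$ — it has to be, since it is declared on $TS^n$ — and that the argument makes no use of symmetry of $g$, so the Finsler case (reversible or not) is handled by the same inequalities once everything is written in terms of $\|v\|=f(v)$. Notably, the explicit bound $B_{g}$ from Section~\ref{sec:proof} is not needed to be uniform on $\mathcal{W}$; the argument uses only the pointwise inequality between the two critical levels, which is clean.
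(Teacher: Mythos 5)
Your proof is correct, but it takes a genuinely different route than the paper. The paper's argument runs through the Powers Lemma~\ref{lem:powers}: for each fixed $m$, the quantities $\cri_g(\omega^{\circledast m})/m$ and $\cri_g(\Theta^{\bullet m})/m$ are continuous in $g$ (the paper asserts that $\cri_g(X)$ depends continuously on $g$ for each fixed class $X$, without proof), and the Powers Lemma sandwiches $2(n-1)/\overline{\alpha}_{g,G}$ between these two for \emph{every} $m$. Since for fixed $g$ the lower and upper terms converge to the sandwiched quantity as $m\to\infty$, a standard squeeze argument (which does not require uniformity in $g$) gives continuity. Your argument instead establishes a \emph{uniform, two-sided multiplicative} estimate $(1-\varepsilon)\cri_{g,G}(X)\le\cri_{g',G}(X)\le(1+\varepsilon)\cri_{g,G}(X)$ valid for all $X$ simultaneously when $g'$ is pointwise $\varepsilon$-close to $g$, and then passes directly to the limit $\overline{\alpha}_{g,G}=\lim_{\deg X\to\infty}\deg(X)/\cri_{g,G}(X)$ supplied by the Resonance Theorem. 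Your route has the virtue of being elementary and self-contained — it avoids the Powers Lemma entirely, and the uniformity of your $\cri$-estimate is strictly stronger than the pointwise-in-$X$ continuity the paper invokes (and, incidentally, supplies a proof of that claim). The paper's route is more economical given the machinery already in hand, since the Powers Lemma inequalities are already available from the proof of the Resonance Theorem. One small notational caution: the abstract phrases $\overline{\alpha}$ as $\lim\cri(X)/\deg(X)$, but the statement of Theorem~\ref{thm:resonance} (and the usage throughout, e.g.\ $\overline{\alpha}=2(n-1)/\overline{\mu}$) makes clear that $\overline{\alpha}_{g,G}=\lim\deg(X)/\cri(X)$, which is what you correctly use.
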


If a metric $g$ on a manifold $M$ has global mean frequency $\overline{%
\alpha }$, the scaled metric $s^{2}g$ (where all lengths are scaled by a
factor $s$) will have mean frequency $\overline{\alpha }/s$. \ But we do not
know how to perturb $\overline{\alpha }$ keeping the volume fixed, or
keeping the mean frequency of a closed geodesic fixed. \ 

In contrast to the Katok metrics, \ the standard ellipsoid metrics are
highly nonresonant. \ The standard $n$-dimensional ellipsoid depends on $n+1$
parameters, and has $(n^{2}+n)/2$ "short" closed geodesics, namely the
intersection of the ellipsoid with any of the coordinate planes. \ \ We
prove the following:

\begin{theorem}
\label{thm:ellipsoid} \textrm{\textsc{(Ellipsoid Theorem)}}

\begin{itemize}
\item[(a)] There is a nonempty open set of parameters $(a_{0},a_{1},a_{2})$
for which the mean frequencies of the three short closed geodesics on the $2$%
-dimensional ellipsoid $M\left(a_0,a_1,a_2\right)$ are distinct.

\item[(b)] For $n\geq 3$ there is a nonempty open set of parameters $%
(a_{0},a_{1,}...a_{n})$ for which there are at least $(n+1)/2$ different
mean frequencies among the short closed geodesics on the $n$-dimensional
ellipsoid $M\left( a_{0},a_{1},\ldots ,a_{n}\right) \,.$%
\end{itemize}
\end{theorem}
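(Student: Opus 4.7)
The plan is to establish both parts by exhibiting a single ellipsoid at which the required mean frequencies are mutually distinct, and then invoking the Continuity Lemma~\ref{lem:continuity} to upgrade this to an open set of parameters. Since the assignment $(a_0,\dots,a_n)\mapsto (\overline{\alpha}_\gamma)_\gamma$ from positive parameters to the tuple of mean frequencies of the short closed geodesics is continuous, the set of parameters on which a prescribed finite collection of these frequencies is pairwise distinct is open. It therefore suffices to produce a single parameter value with the required number of distinct frequencies.

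For part (a) I would proceed by two stages of symmetry breaking, starting from the round sphere $(1,1,1)$ where all three short geodesics are great circles with the same mean frequency. First consider the ellipsoid of revolution $g_\epsilon=(1+\epsilon,1,1)$ for small $\epsilon\ne 0$. The two meridians in the $(x_0,x_1)$- and $(x_0,x_2)$-planes are interchanged by the $O(2)$-rotation about the $x_0$-axis, hence share a common mean frequency $\overline{\alpha}_m(\epsilon)$, while the equator in the $(x_1,x_2)$-plane has mean frequency $\overline{\alpha}_e(\epsilon)$. A direct computation with the Jacobi equation along each geodesic -- made explicit by the integrability of the ellipsoid geodesic flow in elliptic coordinates -- shows $\overline{\alpha}_m(\epsilon)\ne\overline{\alpha}_e(\epsilon)$ for small nonzero $\epsilon$. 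Next perturb to $g_{\epsilon,\delta}=(1+\epsilon,1,1+\delta)$ with $\delta\ne 0$: the two meridian-ellipses now have different axes, the rotational symmetry is broken, and a first-order expansion in $\delta$ of $\overline{\alpha}_{01}-\overline{\alpha}_{02}$ has a leading coefficient -- encoding the effect of the out-of-plane axis on the normal Jacobi operator along the $(x_0,x_1)$-geodesic -- that is nonzero. The three mean frequencies thus become pairwise distinct.

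For part (b) I would exploit the same idea. Consider axis-perturbations of the round sphere of the form $a_j=1+b_j\epsilon$. By the residual $S_{n-1}$ symmetry that permutes the axes other than $x_0$ and $x_j$, the first-order expansion $\overline{\alpha}_{0j}(\epsilon)=\overline{\alpha}_0+\epsilon L_j(\mathbf{b})+O(\epsilon^2)$ of the mean frequency of the $(x_0,x_j)$-ellipse takes the form $L_j(\mathbf{b})=A\,b_0+B\,b_j+C\sum_{k\ne 0,j}b_k$ for universal constants $A,B,C$ depending only on $n$. Provided $B\ne C$ -- a single scalar inequality to be checked by one computation on the round sphere -- the $n$ values $L_1,\ldots,L_n$ are linear in the $b_j$ with slope $B-C$, hence pairwise distinct for any choice of $b_1<b_2<\cdots<b_n$. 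This produces $n\ge \lceil(n+1)/2\rceil$ distinct mean frequencies among the short geodesics on the ellipsoid $(1,1+b_1\epsilon,\ldots,1+b_n\epsilon)$ for small $\epsilon\ne 0$, and continuity again yields an open set of such parameters.

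The main obstacle is the non-vanishing of the first-order separation coefficients: $\overline{\alpha}_m-\overline{\alpha}_e$ and $\partial_\delta(\overline{\alpha}_{01}-\overline{\alpha}_{02})$ in part (a), and the scalar $B-C$ in part (b). Each of these reduces to computing the first-order change in the average index of a perturbed great circle on $S^n$, which can be handled either by a perturbative analysis of the Jacobi equation (expanding around the round metric and invoking the known eigenstructure of the Jacobi operator along a great circle), or by invoking the closed-form Poincar\'e-map formulas for coordinate geodesics arising from the integrability of Jacobi's geodesic flow on the ellipsoid. This is the one genuinely computational step; the remainder of the argument is structural, relying only on symmetry and Lemma~\ref{lem:continuity}.
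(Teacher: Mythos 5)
Your high-level strategy --- exhibit one parameter point where the required frequencies are distinct, then invoke continuity of $g\mapsto\overline{\alpha}_\gamma$ to get an open set --- is the same skeleton the paper uses, but your proof leaves a genuine gap exactly where the paper does its real work, and you acknowledge this yourself. In both parts you reduce the problem to the non-vanishing of first-order perturbation coefficients of the mean frequency at (or near) the round sphere: $\overline{\alpha}_m-\overline{\alpha}_e\ne 0$, $\partial_\delta(\overline{\alpha}_{01}-\overline{\alpha}_{02})\ne 0$, and $B\ne C$. None of these is established, and they cannot simply be waved off as ``one computation on the round sphere'': the round sphere is a maximally degenerate point (every geodesic is closed, every Poincar\'e map is the identity), and it is not even clear from the results used in the paper that the mean frequency is \emph{differentiable} in the metric there --- only continuity of $(g,\gamma)\mapsto\alpha_{\gamma,g}$ is available (property (i) in Section~\ref{sec:continuity}). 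A first-order expansion is therefore not a bookkeeping detail but a substantive new claim requiring its own justification.

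The paper avoids this entirely, and it is worth seeing why. For part (a) it proves strict two-sided estimates on the Gauss curvature along each short ellipse (Proposition after Lemma~\ref{lem:holonomy}), converts them into strict mean-frequency inequalities via the surface curvature-comparison Lemma~\ref{lem:kappa} (which is precisely where equality cases on a discrete set are handled so that $\le$ becomes $<$), obtaining $\overline{\alpha}_1<\overline{\alpha}_3$ for \emph{all} $a_0<a_1<a_2$. It then pins down $\overline{\alpha}_2$ by noting that at the two degenerate boundary spheroids it collides with $\overline{\alpha}_1$ and $\overline{\alpha}_3$ respectively (Equations~\ref{eq:equalaxis1}, \ref{eq:equalaxis2}); an intermediate-value argument along a path of ellipsoids joining the two spheroids then gives an open nonempty interval of parameters with all three distinct, with no derivative ever computed. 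For part (b) the paper does not perturb the round sphere at all: it constructs an explicit, highly anisotropic family $E(\mu,\lambda)$ with $a_{2i}=\mu^i$, $a_{2i+1}=\lambda\mu^i$, decomposes each mean frequency as a sum of two-dimensional contributions via the parallel-frame splitting in Lemma~\ref{lem:holonomy}, and derives disjoint numerical intervals for the $\overline{\alpha}_i$ when $\lambda$ is close to $1$. The moral is that the paper trades the differentiability you need (and don't have) for explicit inequalities that hold far from the round metric, where no degeneracy intervenes.

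If you wanted to rescue your plan for part (b), the cleanest route would be to replace the first-order expansion at the round sphere by the paper's Lemma~\ref{lem:holonomy} together with Lemma~\ref{lem:kappa}: pick axes $a_0<a_1<\cdots<a_n$ with sufficiently spread-out ratios so that the sum $\sum_{k\ne i,j}1/a_k$ (which controls the $(x_i,x_j)$ mean frequency up to a bounded ratio) already separates the geodesics you want. That is essentially what the choice $a_{2i}=\mu^i$, $a_{2i+1}=\lambda\mu^i$ accomplishes. For part (a), the intermediate-value argument is the key idea that lets you avoid computing any derivative; your two-stage symmetry-breaking is pointed in the right direction but still terminates in an unverified nondegeneracy claim rather than a soft topological argument.
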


The Density Theorem may not be optimal. If we could prove a resonance
theorem for equivariant homology, we could improve the minimum density in
Theorem~\ref{thm:density} to 
\begin{equation*}
\underset{\gamma }{{\textstyle\sum }}\,\frac{1}{\alpha _{\gamma }}\geq \frac{%
1}{2}
\end{equation*}%
or, in the nondegenerate case, to 
\begin{equation*}
\underset{\gamma }{{\textstyle\sum }}\,\frac{1}{\alpha _{\gamma }}\geq
\left\{ 
\begin{array}{ccc}
\frac{n}{2(n-1)} & ; & \text{ \ \ }n\text{ even} \\ 
\frac{n+1}{2(n-1)} & ; & \text{ \ \ }n\text{ odd.}%
\end{array}%
\right.
\end{equation*}%
where in each case the sum is over closed geodesics whose mean frequency
lies in a given $\varepsilon $-neighborhood of $\overline{\alpha }$. The
minimum density $\frac{1}{2}$ was obtained by Rademacher, see~\cite{Rad95}, but
the sum there was taken in general over closed geodesics whose mean
frequency lies in a given $\varepsilon $-neighborhood of an interval. With
these improved densities, the number of resonant closed geodesics ($2$) in
Corollary 1.3 (i) and Corollary 1.4 (i) would be replaced by $(n-1)/2$ and,
in the nondegenerate case, by $(n+1)/2$. This last estimate is within a
factor of $2$ of the optimal number, as the odd-dimensional Katok metrics
have as few as $n+1$ closed geodesics. Using the {\em Common Index Jump Theorem}
of Long and Zhu~\cite{LZ} and recent results of Wang~\cite{WW}
one might do even better. 
However products play a crucial role in our proof; 
at present there is not a theory
incorporating equivariant homology/cohomology and the products 
$\circledast$ and $\bullet$ (cf.~Equation~\ref{eq:products})
we have used to prove the Resonance Theorem~\ref{thm:resonance}.
There are related
products on equivariant homology and cohomology 
(see~\cite[p.19]{CS}, and\cite[17.4, p.156]{GH}), but they vanish for spheres,
see Proposition~\ref{pro:vanishing}.

Using a version of the Fadell-Rabinowitz index~\cite{FR} and following
similar results by Ekeland-Hofer~\cite{EH} for periodic orbits of of convex
Hamiltonian energy hypersurfaces in $\mathbb{R}^{2n}$ Rademacher studied in ~%
\cite{Rad95} limits closely related to 
\begin{equation*}
\lim_{m\rightarrow \infty }\frac{\cri(z\cup \eta ^{\cup m})%
}{m}
\end{equation*}%
for a sequence of equivariant cohomology classes $z\cup \eta ^{\cup m}\in
H_{S\mathbb{O}(2)}^{\ast }\left( \Lambda ,\Lambda ^{0};\mathbb{Q}\right) $
where $\eta $ is a nonnilpotent element for the cup product $\cup $ in
equivariant cohomology. It is not known if there is a metric for which these
limits do not exist. However if 
\begin{equation}
\limsup_{m \to\infty }\frac{\cri\left(z\cup \eta
^{\cup m}\right)}{m}\neq \liminf_{n\to \infty } 
\frac{\cri \left(z\cup \eta ^{\cup m}\right)}{m}  \label{eq:limsup}
\end{equation}%
for a fixed metric $g$ on $M$, there would be infinitely many closed
geodesics on $M$ for any metric in a neighborhood of $g,$ 
cf.~\cite[Cor.6.5]{Rad95}. 
In Section~\ref{sec:critical} we will look at analogous 
limits for the loop products, and prove
an analogous theorem. Proposition~\ref{pro:muX}, Corollary~\ref{cor:muX}, and the 
Theorem~\ref{thm:interval}
({\em Interval Theorem}) should be compared to 
Ekeland-Hofer~\cite{EH} and Rademacher~\cite{Rad95}.
If one could extend the resonance theorem to equivariant
homology/cohomology, this would imply that the index interval discussed in~%
\cite{Rad95} is always a point, and that Inequality~\ref{eq:limsup} never
holds.

\smallskip

\textsc{Organization of the paper }

\smallskip

After discussing the Resonance Theorem in Section~\ref{sec:discussion}
we present general facts about critical
levels, duality, and coefficients in Section~\ref{sec:critical}. 
Section~\ref{sec:mean} is concerned with
limits of the type given in Equation~\ref{eq:limsup} for the loop homology and loop
cohomology products. In Section~\ref{sec:proof} we prove the \emph{Resonance
Theorem}~\ref{thm:resonance}. 

The proof depends on the results of Section~\ref{sec:critical} and Section~%
\ref{sec:mean}, and upon three Lemmas~\ref{lem:A}, ~\ref{lem:B}, ~\ref{lem:C}
regarding the loop homology and cohomology of $\Lambda S^{n}$. In Section~%
\ref{sec:proof_Lemma_C} we explain how Lemma~\ref{lem:A} and Lemma~\ref%
{lem:B} follow from the results of~\cite{GH} and prove Lemma~\ref{lem:C}. We
also compute the Chas-Sullivan product on $H_{\ast }\left( \Lambda (S^{n});%
\mathbb{Z}_{2}\right) $ for $n>2$ even.

In Section~\ref{sec:continuity} we prove the Continuity and Density
Theorems, Corollary~\ref{cor:pinch} and Corollary~\ref{cor:Katok}. 
Section~\ref{sec:perturbation} contains the proof of the Open Mapping
Theorem~\ref{thm:open_mapping_theorem}. The main step is the Perturbation
Theorem~\ref{theorem:perturbation}; Lemma~\ref{lem:drei} on +-curves in the
symplectic group may be of independent interest. In Section~\ref%
{sec:ellipsoid} we prove the Ellipsoid Theorem. Appendix A (Section~\ref{sec:more}) contains proofs of several results on closed geodesics that should be familiar to experts. 
In Appendix B (Section~\ref{sec:appendixB}) we show 
that the string brackets on
the equivariant homology and cohomology of spheres are trivial. 
\section{Discussion of the Resonance Theorem~\ref{thm:resonance}}
\label{sec:discussion}

\smallskip

{\sc Average index and mean frequency of a closed geodesic:}

\smallskip
Fix a metric $g$ on $M$, with $M$ simply connected, and let $G$ be
a field. According to a theorem of Gromov~\cite[thm.7.3]{Gr}, \cite[thm.5.10]{Pa},
the points $\left(\cri(X),\deg (X)\right)$, $X\in H_{\ast
}(\Lambda M)$, lie between two lines: \ There are positive constants $C_{1}$
and $C_{2}$ such that, for any $X\in H_{\ast }(\Lambda M)$, 
\begin{equation*}
C_{1}\,\cri(X)<\deg (X)<C_{2}\,\cri(X)
\end{equation*}%
The Resonance Theorem~\ref{thm:resonance} 
states if $M$ is a sphere, there is a positive real number $%
\overline{\alpha }=\overline{\alpha }_{g,G}$ and a constant $h$ so that 
\begin{equation*}
\overline{\alpha }\,\cri(X)-h<\deg (X)<\overline{\alpha }\,\cri(X)+h.
\end{equation*}

Let $M$ be a compact Riemannian or Finsler manifold of dimension $n$. If $%
\gamma \in \Lambda M$, we denote by $\gamma ^{m}$ the $m^{th}$ iterate of $%
\gamma :\gamma ^{m}(t)=\gamma (mt)$. A closed geodesic that is not an $%
m^{th} $ iterate for any $m>1$ is called \textit{prime.} Closed geodesics
are considered \textit{geometrically distinct }if they have different images
in $M$. (For a non-reversible Finsler metric we also consider closed
geodesics distinct if they have opposite orientations.) If $\gamma $\ is a
closed geodesic on $M$ with length $L>0$, then $\gamma ^{m}$ is a closed
geodesic of length $mL.$ Bott \cite{Bo} proved that the \textit{average index%
} 
\begin{equation*}
\alpha _{\gamma }=\lim_{m\rightarrow \infty }\frac{\mathrm{ind}(\gamma ^{m})%
}{m}
\end{equation*}%
exists. Therefore so does the \textit{mean frequency} $\overline{\alpha }%
_{\gamma }$:%
\begin{equation*}
\overline{\alpha }_{\gamma }=\underset{m\rightarrow \infty }{\lim }\frac{%
\mathrm{ind}(\gamma ^{m})}{l(\gamma ^{m})}=\frac{\alpha _{\gamma }}{L}\,.
\end{equation*}%
The mean frequency can be thought of as the number of conjugate points per
unit length. It can be estimated using the sectional or flag curvature,
cf. Lemma~\ref{lem:estimate}.  If 
\begin{equation*}
\delta^2\le K\le\Delta^2
\end{equation*}%
then for every closed geodesic $\gamma $ on $M$, 
\begin{equation*}
\frac{\delta (n-1)}{\pi }\le\overline{\alpha }_{\gamma }
\le\frac{\Delta(n-1)}{\pi }.
\end{equation*}
Note also that $\overline{\alpha }_{\gamma }$ does not depend on the field,
and that \ \ 
\begin{equation*}
\overline{\alpha }_{\gamma }=\overline{\alpha }_{\gamma ^{m}}
\end{equation*}%
for $m>1$.
\\
\smallskip\\
{\sc Statement of the Resonance Theorem~\ref{thm:resonance}
 in terms of the spectral sequence:}

\smallskip
Fix a metric $g$ on $M$, and let $G$ be a field. Let $0=\ell _{0}<\ell
_{1}<\ldots$ be a sequence of real numbers each of which is a regular value, or
a nondegenerate critical value, of $F$. \ If the critical values are
isolated it will make sense to assume that there is a unique critical value
in each $(\ell _{i},\ell _{i+1}]$. \ The filtration $\{\Lambda ^{\leq \ell
_{i}}\}$ induces a spectral sequence converging to $H_{\ast }(\Lambda ).$ \
\ Each page of the homology spectral sequence is bigraded by the index set $%
\{i\}$ of the sequence $\{\ell _{i}\}$, and the whole numbers $d$ . The
units of $\ell _{i}$ are \textit{length} (the same units as critical
values), and the units of $d$ are \textit{degree} (the same units as index)
. It is convenient to think of this as a first quadrant spectral sequence in
the $(\ell ,d)$-plane indexed by $\{(\ell _{i},d)\}$. $\ $Each page of the
spectral sequence is the direct sum of its $(\ell _{i},d)$ terms. \ The $%
(\ell _{i},d)$ term of the $\mathcal{E}^{1}$ page is given by 
\begin{equation*}
H_{d}(\Lambda ^{\leq \ell _{i}},\Lambda ^{\leq \ell _{i-1}};G).
\end{equation*}%
The $k^{th}$ page is obtained from the previous page by "cancelling" terms
by the differential $D_{k}$. \ The differential $D_{k}$, $k\geq 1$ has
degree $(-k,-1)$ (so that, roughly speaking, 
$D_{k}:H_{d}(\Lambda ^{\leq\ell _{i}},
\Lambda ^{\leq \ell _{i-1}};G)
\rightarrow 
H_{d-1}(\Lambda ^{\leq \ell _{i-k}},
\Lambda ^{\leq \ell _{i-(k+1)}};G)$). 
The homology version of
the Resonance Theorem~\ref{thm:resonance} 
states that if $M$ is a sphere, in the $\mathcal{E}%
^{\infty }$ page of the spectral sequence all nontrivial entries lie within
a bounded distance of a line 
\begin{equation*}
d=\overline{\alpha }\ell .
\end{equation*}

There is also a spectral sequence converging to $H^{\ast}(\Lambda)$ , and a
similar statement for cohomology:\ The $(\ell_{i},d)$ term of the $\mathcal{E%
}^{1}$ page is given by 
\begin{equation*}
H^{d}(\Lambda^{\leq\ell_{i}},\Lambda^{\leq\ell_{i-1}};G).
\end{equation*}
and the differential $D_{k}$, $k\geq1$ has degree $(k,1)$. \ If $M$ is a
sphere, in the $\mathcal{E}^{\infty}$ page of the spectral sequence all
nontrivial entries lie within a bounded distance of the line 
\begin{equation*}
d=\overline{\alpha}\ell.
\end{equation*}

\bigskip

If all closed geodesics are isolated, and if each $(\ell _{i-1},\ell _{i}]$
contains exactly one critical value $L_{i}$, the $\mathcal{E}^{1}$ page of
the spectral sequence is naturally a direct sum of the "contributions" of
all the closed geodesics \ $\gamma $. \ \ A closed geodesic $\gamma $ of
length $L_{i}$ contributes its \textit{local homology} 
\begin{equation}
\label{eq:local_homology}
H_{\ast }(\Lambda ^{<L_{i}}\cup \group\cdot \gamma ,\Lambda ^{<L_{i}})
\end{equation}
\ to $H_{\ast }(\Lambda ^{\leq \ell _{i}},\Lambda ^{\leq \ell _{i-1}})$,
where $\group\cdot \gamma $ is the orbit of $\gamma $ under the group $%
\group$, with $\group = \mathbb{O}(2)$ for a Riemannian metric and $%
\group = S\mathbb{O}(2)=S^1$ for an non-reversible Finsler metric. \ \ If $%
X\in H_{\ast }(\Lambda ^{<L_{i}}\cup \group\cdot \gamma ,\Lambda
^{<L_{i}})$ is nontrivial, then we conclude from Lemma~\ref%
{lem:resonant_iterates} on the \emph{resonant iterates:} 
\begin{equation*}
\left|\deg X-\overline{\alpha }_{\gamma }\cri (X) \right|\leq n\,.
\end{equation*}%
Thus the contributions to the $\mathcal{E}^{1}$ page of the homology
spectral sequence (or, by the same argument, the cohomology spectral
sequence) from the iterates \ of a single prime closed geodesic $\gamma $ of
mean frequency $\overline{\alpha }_{\gamma }$ lie at most a vertical
distance $n$ from the line%
\begin{equation*}
d=\overline{\alpha }_{\gamma }\ell .
\end{equation*}

\smallskip

{\sc Examples:}

\smallskip

The resonance theorem is obvious in the following three cases:

\begin{enumerate}
\item A hypothetical metric with only one closed geodesic. In this case it
follows from the above that in the $\mathcal{E}^{1}$ page of the homology
spectral sequence, all nontrivial terms lie at a bounded distance from the
line $d=\overline{\alpha}\ell.$ Because (over a field) the $\mathcal{E}%
^{\infty}$ page is obtained by "cancelling" terms $x$ and $y$ if $Dx=y$, on
the $\mathcal{E}^{\infty}$ page all nontrivial terms again lie at a bounded
distance from the same line.

\item The round metric of constant curvature $K$. \ In this case all
geodesics are closed with (prime) length $2\pi/\sqrt{K}$ . \ The index of
the $m^{th}$ iterate of every closed goedesic is $(2m-1)(n-1)$ and therefore
as in case (1), in the $\mathcal{E}^{1}$ page of the homology spectral
sequence all nontrivial terms lie at a bounded distance from the line 
\begin{equation*}
d=\frac{\sqrt{K}(n-1)}{\pi}\ell \,,
\end{equation*}
and this property persists to $\mathcal{E}^{\infty}.$

\item The Katok metrics on $S^n,$ compare Corollary~ \ref{cor:Katok}. In
these examples the closed geodesics have different lengths and different
average indices but they all have the same mean frequency since the flag
curvatures are constant! Thus in this case also the theorem is clear from
the $\mathcal{E}^{1}$ page.
\end{enumerate}

In all other cases we find the Resonance Theorem surprising.

Consider for example the $n$-dimensional ellipsoid%
\begin{equation*}
M=M\left( a_{0},a_{1},\ldots a_{n}\right) =\left\{ (x_{0},x_{1},\ldots
,x_{n})\in \mathbb{R}^{n+1}\,\,:\,\,\frac{x_{0}^{2}}{a_{0}^{2}}+\frac{%
x_{1}^{2}}{a_{1}^{2}}+...+\frac{x_{n}^{2}}{a_{n}^{2}}=1\right\}
\end{equation*}%
where $a_{0}<a_{1}<...<a_{n}$.
The ellipsoid carries 
$\binom{n+1}{2}=(n^{2}+n)/2$ \emph{short} closed geodesics, namely the
intersection of the ellipsoid with any of the coordinate planes. However
Morse proved that in the limit as all $a_{i}\rightarrow 1$, the length of
the next shortest prime closed geodesic goes to infinity, cf.~\cite[Lem.3.4.7%
]{Kl82}. The geodesic flow on the ellipsoid is integrable and there are
invariant $2$-dimensional tori in the unit tangent bundle. Periodic flow
lines are dense, i.e. the unit tangent vectors of closed geodesics form a
dense subset of the unit tangent bundle. In particular there are infinitely
many closed geodesics, cf.~\cite[Sec.3.5]{Kl82}. It would be very
interesting to know a formula for the mean frequencies of the $(n^{2}+n)/2$
short closed geodesics in terms of the $(n+1)$ parameters $a_{j}$! \ Our
intuition is that for a generic ellipsoid, the mean frequencies of the \emph{%
short} closed geodesics are all different. This would mean that the $%
\mathcal{E}^{1}$ page of the spectral sequence \ has nontrivial entries
roughly evenly spaced along $\binom{n+1}{2}$ different lines through the
origin in the $(\ell ,d)$ plane. \ But in the $\mathcal{E}^{\infty }$ page
at most one of these lines remains; all high iterates of all but at most one
of the $\binom{n+1}{2}$ short geodesics must be \emph{killed off} before the 
$\mathcal{E}^{\infty }$ page. This would mean that the geodesics that
dominate at low energy are irrelevant at high energy. While we have not had
much success computing the mean frequencies on an ellipsoid, the
{\em Ellipsoid Theorem~\ref{thm:ellipsoid}}
shows that for an open set of metrics, many of the mean
frequencies of the short closed geodesics on an ellipsoid are distinct.

\section{Critical levels, duality, and coefficients}

\label{sec:critical} In this section let $X$ \ be a Hilbert manifold
and let $F:X\rightarrow \mathbb{R}$ be a smooth function satisfying
condition C. For $a\in \mathbb{R}$ let $X^{a}=\{x\in X;F(x)\leq a\}$, and $%
X^{a-}=\{x\in X;F(x)<a\}$. \ The critical levels of a relative homology or
cohomology class$\ X\in H_{m}(X^{c},X^{a};G)$ or $x\in H^{m}\left(
X^{c},X^{a};G\right) $ are defined by%
\begin{eqnarray*}
\cri(X) &=&\cri_{G}(X)=\inf \{b\in
\lbrack a,c]:x\text{ is in the image of }H_{m}\left( X^{b},X^{a};G\right) \};
\\
\cri(x) &=&\cri_{G}(x)=\sup \{b\in
\lbrack a,c]:x\text{ is in the image of }H^{m}\left( X^{c},X^{b-};G\right) \}
\\
&=&\sup \{b\in \lbrack a,c]:x\rightarrow 0\text{ in }H^{m}\left(
X^{b-},X^{a};G\right) \}.
\end{eqnarray*}%
The critical level of a homology or cohomology class is a critical value of $%
F$.


\begin{lemma}
\label{lem:duality} \textrm{\textsc{(Duality Lemma) }} Let $G$ be a field.
Suppose $H_{m}\left( X^{c},X^{a};G\right) =G,$ and that $\ X\in
H_{m}(X^{c},X^{a};G)$ and $x\in H^{m}\left( X^{c},X^{a};G\right) $ have
nontrivial$\,$Kronecker product $x(X)$. \ Then $\cri(X)=\cri(x)\,.$
\end{lemma}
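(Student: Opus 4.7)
The plan is to establish the two inequalities $\cri(X) \geq \cri(x)$ and $\cri(x) \geq \cri(X)$ separately; both follow from naturality of the Kronecker pairing combined with the long exact sequence of a suitable triple. No regularity hypothesis on any intermediate level is needed.

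For $\cri(X) \geq \cri(x)$, I argue by contradiction. Suppose $\cri(X) < \cri(x)$ and pick $b$ strictly between them. Choose an auxiliary $b' \in (\cri(X), b)$. Since $b' > \cri(X)$ the class $X$ lies in the image of $H_m(X^{b'}, X^a) \to H_m(X^c, X^a)$, and since $b' < b$ we have $X^{b'} \subset X^{b-}$, so $X$ is in the image of $H_m(X^{b-}, X^a) \to H_m(X^c, X^a)$. By the long exact sequence of the triple $(X^c, X^{b-}, X^a)$ in homology this is equivalent to $i_\ast X = 0$ in $H_m(X^c, X^{b-})$, where $i\colon (X^c, X^a) \hookrightarrow (X^c, X^{b-})$ is the pair inclusion. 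On the other hand, $b < \cri(x)$ means $x = i^\ast x''$ for some $x'' \in H^m(X^c, X^{b-})$. Naturality of the Kronecker pairing gives
\begin{equation*}
x(X) = \langle i^\ast x'', X \rangle = \langle x'', i_\ast X \rangle = 0,
\end{equation*}
contradicting the hypothesis $x(X) \ne 0$.

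For $\cri(x) \geq \cri(X)$, I show directly that every $b \in [a, \cri(X))$ lies in the set $\{b : x \in \operatorname{im}(H^m(X^c, X^{b-}) \to H^m(X^c, X^a))\}$; the inequality follows on taking $b \to \cri(X)^-$. Since $b < \cri(X)$, the class $X$ is not supported on $X^b$, so by the long exact sequence of the triple $(X^c, X^b, X^a)$ the image $\pi(X) \in H_m(X^c, X^b)$ is nonzero, where $\pi\colon (X^c, X^a) \to (X^c, X^b)$ is the pair inclusion. Working over a field, the Kronecker pairing detects nonzero homology classes, so there exists $y \in H^m(X^c, X^b)$ with $\langle y, \pi(X) \rangle \ne 0$, hence by naturality $\langle \pi^\ast y, X \rangle \ne 0$. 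Because $H_m(X^c, X^a; G) = G$ and $G$ is a field, universal coefficients give $H^m(X^c, X^a; G) = G$; since $x$ pairs nontrivially with $X$, $x$ generates this group, so $\pi^\ast y$ must be a nonzero scalar multiple of $x$. Finally, the pair inclusion $\pi$ factors as $(X^c, X^a) \hookrightarrow (X^c, X^{b-}) \hookrightarrow (X^c, X^b)$ because $X^a \subset X^{b-} \subset X^b$, so $\pi^\ast$ factors through $H^m(X^c, X^{b-}) \to H^m(X^c, X^a)$, exhibiting $x$ as an element of the image of the latter.

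The main obstacle is really bookkeeping: tracking which class sits in which relative pair, and crucially exploiting the strict inclusion $X^{b-} \subset X^b$, which underlies the asymmetry between the definitions of $\cri$ on homology and cohomology. Beyond this, both halves of the proof are diagram chases with the Kronecker pairing and a long exact sequence.
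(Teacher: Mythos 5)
Your proof is correct. For the nontrivial inequality $\cri(x)\geq\cri(X)$ you carry out a step formally dual to the paper's: the paper restricts $x$ to the sub-pair $(X^{b},X^{a})$ and finds a homology class there pairing nontrivially with it (which must then push forward to a multiple of $X$), whereas you push $X$ forward to the over-pair $(X^{c},X^{b})$ and find a cohomology class $y$ there pairing nontrivially (which must then pull back to a multiple of $x$); since both steps rest on the same ingredients --- nondegeneracy of the Kronecker pairing over a field, naturality, and the one-dimensionality of $H_{m}(X^{c},X^{a};G)$ --- this is essentially the paper's argument, and you additionally write out in full the ``clear'' direction $\cri(x)\leq\cri(X)$ that the paper states without proof.
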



\begin{proof}
\ Clearly 
\begin{equation}
\cri(x)\leq \cri(X).  \label{eq:YYY}
\end{equation}%
Suppose $a<b<c$, and let $j:(X^{b},X^{a})\rightarrow (X^{c},X^{a})$. \ If 
$\cri(x)<b$, then $j^{\ast }(x)\in H^{m}(X^{b},X^{a};G)$ is
nontrivial. \ It follows that there is a class $Y\in H_{m}(X^{b},X^{a};G)$
with $j^{\ast }(x)(Y)\neq 0$. But then $x(j_{\ast }(Y))=j^{\ast }(x)(Y)\neq
0,$ from which it follows that $j_{\ast }(Y)$ is a multiple of $X$, and thus 
$\cri(X)\leq b.$
\end{proof}


\bigskip

\begin{remark} \rm
Things could be more complicated if the groups have rank $>1.$
Assume that $H_{m}(X^{c},X^{a};G)=G^{k}.$ For simplicity assume that $a$ and 
$c$ are not critical values. There will be critical values $b_{1},\ldots,b_{j}$
with $a<b_{1}<...<b_{j}<c$ and subspaces 
\begin{equation*}
A_{1}\subset A_{2}\subset ...\subset A_{j}=H_{m}(X;G)
\end{equation*}%
where $A_{i}$ is the image of $H_{m}(X^{b_{i}};G)$ in $H_{m}(X;G).$, and
where $A_{i}/A_{i-1}$ has dimension $k_{i}$, with $\Sigma _{i=1}^{j}k_{i}=k$%
, and so that if $Y\in (A_{i}-A_{i-1})$, then $\cri (Y)=b_{i}$.

Similarly there are subspaces 
\begin{equation*}
B_{j}\subset B_{j-1}\subset...\subset B_{1}=H^{m}(X;G)
\end{equation*}
where $B_{i}$ is the kernel of the map $H^{m}(X;G)%
\rightarrow H^{m}(X^{<b_{i}};G),$ so that $B_{i}/B_{i+1}$ has dimension $%
k_{i}$, and $B_{i}-B_{i+1}$ is the set of cohomology classes with critical
value $b_{i}$. 

Note also that with respect to the Kronecker product, 
\begin{equation*}
B_{i}(A_{i-1})=0,
\end{equation*}
and this equation could be used to define $B_{i}$ once the $A_{i}$ are
defined. For a generic basis for $H_{m}(X^{c},X^{a};G)$, the critical value
of each generator would be high ($=b_{k}$), and for the dual basis (or a
generic basis) for $H^{m}(X^{c},X^{a};G)$, the critical value of each
generator would be low ($=b_{1}$). 
\end{remark}


\begin{lemma}
\label{lem:effect} \textrm{\textsc{(Lemma on the effect of
Coefficients)}} Suppose $H^{m}(X^{c},X^{a})=:H^{m}(X^{c},X^{a};\mathbb{Z})=%
\mathbb{Z}$ and $H_{m}(X^{c},X^{a})=:$ $H_{m}(X^{c},X^{a};\mathbb{Z})=%
\mathbb{Z}\,.$ Let $z,Z$ be generators of these groups, and let $w\in
H^{m}(X^{c},X^{a};G)$ and $W\in H_{m}(X^{c},X^{a};G)$ be nontrivial, with 
$G=\mathbb{Z}$ or a field. Then 
\begin{align*}
\cri_{\mathbb{Z}}(z)& \leq \cri%
_{G}(w)\leq \cri_{\mathbb{Z}}(Z) \\
\cri_{\mathbb{Z}}(z)& \leq \cri%
_{G}(W)\leq \cri_{\mathbb{Z}}(Z)
\end{align*}
\end{lemma}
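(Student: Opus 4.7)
My plan is to compare integer and $G$-coefficient critical levels via a single integer-homology filtration, transferring between coefficients by the Universal Coefficient Theorem. First, from $H^m(X^c, X^a; \mathbb{Z}) = \mathbb{Z} = H_m(X^c, X^a; \mathbb{Z})$, UCT forces $\mathrm{Ext}(H_{m-1}(X^c, X^a; \mathbb{Z}), \mathbb{Z}) = 0$, and this in turn forces $H_{m-1}(X^c, X^a; \mathbb{Z})$ to be torsion-free. Consequently $\mathrm{Tor}(H_{m-1}(X^c, X^a; \mathbb{Z}), G) = 0$ for every abelian group $G$, and UCT yields natural identifications $H_m(X^c, X^a; G) \cong G$ and $H^m(X^c, X^a; G) \cong G$, compatible with coefficient change.

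For each $b \in [a,c]$ let $n(b) \in \mathbb{Z}_{\geq 0}$ be the nonnegative generator of the image of the integer-coefficient map $(j_b)_\ast \colon H_m(X^b, X^a; \mathbb{Z}) \to H_m(X^c, X^a; \mathbb{Z}) = \mathbb{Z}$. Immediately $\cri_\mathbb{Z}(Z) = \inf\{b : n(b) = 1\}$. Moreover UCT-naturality identifies the $\mathrm{Hom}$-quotient of $j_{b-}^\ast(z) \in H^m(X^{b-}, X^a; \mathbb{Z})$ with the homomorphism $(j_{b-})_\ast \colon H_m(X^{b-}, X^a; \mathbb{Z}) \to \mathbb{Z}$; whenever $n(b-) > 0$ this $\mathrm{Hom}$-quotient is nonzero and hence $j_{b-}^\ast(z) \neq 0$, so $\cri_\mathbb{Z}(z) \leq \inf\{b : n(b) > 0\}$. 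By a second application of UCT-naturality, the image of $(j_b)_\ast \colon H_m(X^b, X^a; G) \to H_m(X^c, X^a; G) = G$ equals $n(b) \cdot G$: the $\mathrm{Tor}$-summand of the source maps into the vanishing $\mathrm{Tor}$-summand of the target, and the tensor summand contributes $n(b)\mathbb{Z} \otimes G = n(b)G$.

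The $W$-bounds now fall out. For any nonzero $W \in G$: when $b > \cri_\mathbb{Z}(Z)$ we have $n(b) = 1$, the image is all of $G$, and $\cri_G(W) \leq \cri_\mathbb{Z}(Z)$; when $b < \cri_\mathbb{Z}(z)$ we have $n(b) = 0$ (since $\cri_\mathbb{Z}(z) \leq \inf\{b' : n(b') > 0\}$), the image is zero, and $\cri_G(W) \geq \cri_\mathbb{Z}(z)$. For the cohomology class $w$, the upper bound $\cri_G(w) \leq \cri_\mathbb{Z}(Z)$ follows from the Duality Lemma~\ref{lem:duality} applied to $(w,W)$: under our UCT identifications the Kronecker pairing $w(W)$ is a product of two nonzero elements of the integral domain $G$, hence nonzero, and the argument of Lemma~\ref{lem:duality} only uses nontriviality of this pairing together with $H_m(X^c,X^a;G) = G$ to give $\cri_G(w) \leq \cri_G(W) \leq \cri_\mathbb{Z}(Z)$. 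For the lower bound, factor $w = g_0 \cdot z_G$ where $z_G := \phi_\ast(z) \in H^m(X^c, X^a; G)$ is the coefficient-change image of $z$ via $\phi \colon \mathbb{Z} \to G$ and $g_0 \in G$ is nonzero; then $j_{b-}^\ast(z) = 0$ implies $j_{b-}^\ast(z_G) = \phi_\ast(j_{b-}^\ast(z)) = 0$ and hence $j_{b-}^\ast(w) = g_0 \cdot j_{b-}^\ast(z_G) = 0$, giving $\cri_G(w) \geq \cri_\mathbb{Z}(z)$. The chief technical step, and the main potential obstacle, is the UCT-naturality identification of the $G$-coefficient image with $n(b)G$; this depends on the vanishing of $\mathrm{Tor}$ in the target, secured at the outset.
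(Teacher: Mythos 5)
Your UCT preliminaries, both $w$-inequalities, and the upper bound $\cri_{G}(W)\leq\cri_{\mathbb{Z}}(Z)$ are all sound; for the last of these you only need that the image of $(j_b)_\ast$ with $G$-coefficients \emph{contains} $n(b)G$, which does follow from naturality of the tensor subgroup $H_m(X^b,X^a)\otimes G\hookrightarrow H_m(X^b,X^a;G)$. The gap is in the lower bound $\cri_{G}(W)\geq\cri_{\mathbb{Z}}(z)$: the assertion that the image of $(j_b)_\ast\colon H_m(X^b,X^a;G)\to G$ \emph{equals} $n(b)G$ is false in general. The universal coefficient sequence exhibits $H_m(X^b,X^a)\otimes G$ as a natural \emph{subgroup} and $\mathrm{Tor}(H_{m-1}(X^b,X^a),G)$ as a natural \emph{quotient}, but the splitting is not natural, so there is no ``Tor-summand of the source'' that maps into the target; the vanishing of the target's Tor term imposes no constraint. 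Concretely, take $C_m=\mathbb{Z}\langle a,b\rangle$, $C_{m-1}=\mathbb{Z}\langle c\rangle$ with $\partial a=0$, $\partial b=2c$ (so $H_m(C)=\mathbb{Z}\langle a\rangle$, $H_{m-1}(C)=\mathbb{Z}_2$), take $D_m=\mathbb{Z}\langle a'\rangle$, $D_{m-1}=0$, and let $f(a)=0$, $f(b)=a'$. Then $f_\ast=0$ on $H_m(-;\mathbb{Z})$, so $n=0$, yet with $G=\mathbb{Z}_2$ one has $f_\ast(\bar b)=\bar a'\neq 0$, so the $G$-image is all of $G$ rather than $nG=0$. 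In such a configuration your argument would incorrectly conclude $\cri_G(W)>b$.

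The paper never tries to bound $\cri_{G}(W)$ from below by this kind of direct image count in homology. After establishing $\cri_{\mathbb{Z}}(z)\leq\cri_{G}(w)$ by coefficient change on cohomology (exactly as in your last paragraph), it transfers the bound to $W$: when $G$ is a field it invokes the Duality Lemma~\ref{lem:duality} to get $\cri_{G}(w)=\cri_{G}(W)$, and when $G=\mathbb{Z}$ it observes that $z(W)=g\,z(Z)\neq 0$ and applies the easy direction of duality (Equation~\ref{eq:YYY}) to get $\cri_{\mathbb{Z}}(z)\leq\cri_{\mathbb{Z}}(W)$. You already use that easy direction to deduce $\cri_G(w)\leq\cri_G(W)$; the fix is to apply it once more with $z$ and $W$ in place of $w$ and $W$ (using the mixed Kronecker pairing $H^m(\cdot;\mathbb{Z})\otimes H_m(\cdot;G)\to G$, where $z(W)=g\neq 0$), rather than to attempt to control the $G$-coefficient image by $n(b)$.
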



\begin{proof}
First $H^{m}(X^{c},X^{a})=\mathbb{Z}$ 
implies~\cite[Cor.7.3]{Br} that $H_{m-1}(X^{c},X^{a})$ is torsion
free. From this it follows~\cite[p.278]{Br} that 
$H_{m-1}(X^{c},X^{a})\ast G=0$, and thus~\cite
[7.5]{Br} that 
\begin{equation*}
H_{m}(X^{c},X^{a})\otimes G\overset{\approx }{\rightarrow }%
H_{m}(X^{c},X^{a};G)=G.
\end{equation*}
So $W=Z\otimes g$ for some (nontrivial) $g\in G$. \ It follows that $\cri_{G}(W)
\leq \cri_{\mathbb{Z}}(Z)$ 
(since for every representative $R$ for the homology class $Z$, we
have the representative $R\otimes g$ for $W$ , and $\rm{Supp}(R\otimes g)\subset
{\rm Supp}(R)$.) \ \ Moreover~\cite[p.278]{Br} 
${\rm Ext} \left(H_{m-1}\left(X^{c},X^{a}\right),G\right)=0$, so 
(~\cite[7.2]{Br}). 
\begin{equation*}
H^{m}(X^{c},X^{a};G)\overset{\approx }{\rightarrow }%
{\rm Hom}(H_{m}(X^{c},X^{a}),G)=G,
\end{equation*}
and when $G=\mathbb{Z}$ 
the latter isomorphism $\overset{\approx }{\rightarrow }$ is induced by
the Kronecker product. \ Thus $z(Z)=1$. \ \ This isomorphism is also natural
in the coefficients, so the map $\mathbb{Z}\rightarrow G$ 
taking $1$ to the identity of $G$ induces the map 
$H^{m}(X^{c},X^{a})\rightarrow H^{m}(X^{c},X^{a};G)$ 
taking $1$ to the
identity of $G$. \ Thus $w=hz$ (as maps $H_{m}(X^{c},X^{a})\rightarrow G$)
for some (nontrivial) $h\in G$. It follows that 
$\cri$ $_{\mathbb{Z}}(z)\leq \cri_{G}(w)$. 
If $G=\mathbb{Z}$, 
we use the fact that the Kronecker products $w(Z)=hz(Z)$ and 
$z(W)=gz(Z) $ are nontrivial to conclude that 
$\cri_{\mathbb{Z}}(w)
\leq \cri_{\mathbb{Z}}(Z)$ and 
$\cri_{\mathbb{Z}}(z)
\leq \cri_{\mathbb{Z}}(W)$. 
When $G$ is a field we have from {\em Duality Lemma}~\ref{lem:duality} that 
$\cri_{G}(w)=\cri_{G}(W)$, 
and the Lemma follows.
\end{proof}


\begin{remark}
\rm
It is not difficult to imagine a scenario in which the critical
level of a homology class depends on the coefficients; for example it could
be that a homology class $X$ \ has critical level $a$, but that some
multiple of $X$ is homologous to a class at a lower level. We do not know an
example of a loop space where this happens. \ The simplest case is an
interesting question of basic geometry: Let $X\in H_{n-1}(\Lambda S^{n})$ be
a generator. Is there a metric $g$ on $S^{n}$ and an integer $k$ so that 
\begin{equation*}
\cri_{g,\mathbb{Z}}(kX)<\cri_{g,\mathbb{Z}}(X)\,?
\end{equation*}
\end{remark}

\section{Mean critical levels for homology and cohomology}

\label{sec:mean}

Fix a Riemannian or Finsler metric $g$ on a compact Riemannian manifold $M$
of dimension $n$. \ Fix a coefficient ring $G$, either a field or $\mathbb{Z}$. 
Let $\Lambda $ be the free loopspace and $\Lambda ^{0}\subset \Lambda $
the constant loops. \ The loop products $\bullet $ and $\circledast $ of
Chas-Sullivan~\cite{CS} and Sullivan~\cite{Su} 
\begin{alignat}{5}
\bullet & :&H_{j}(\Lambda )\otimes H_{k}(\Lambda )&\longrightarrow&
H_{j+k-n}&(\Lambda )\nonumber \\
\circledast & : &\enspace H^{j}(\Lambda ,\Lambda ^{0})
\otimes H^{k}(\Lambda ,\Lambda
^{0})&\longrightarrow&\enspace H^{j+k+n-1}&(\Lambda ,\Lambda ^{0})
\label{eq:products}
\end{alignat}
are commutative up to sign. 
It is shown in~\cite[Prop.5.3, Cor. 10.1]{GH} that they satisfy the
following basic inequalities:%
\begin{eqnarray}
\cri(X\bullet Y) &\leq &\cri(X)+
\cri(Y)\text{ for all }X,Y\in H_{\ast }(\Lambda )  \notag
\label{eq:XY} \\
\cri(x\circledast y) &\geq &\cri(x)+
\cri (y)\,\,\,\text{ for all }x,y\in H^{\ast }(\Lambda
,\Lambda ^{0}),
\end{eqnarray}%
The proofs given there are also valid in the Finsler case. The sequences 
$\cri(\tau ^{\circledast m})$ 
and $\cri%
(x\circledast \tau ^{\circledast m})$ for \textit{cohomology} classes $\tau
,x$ are \textit{always} increasing (if finite) by Equation~\ref{eq:XY}. \ As
far as we know the sequences $\cri(U^{\bullet m})$ \ and 
$\cri(X\bullet U^{\bullet m})$\ for $U,X\in H_{\ast
}(\Lambda )$ are not necessarily increasing.

We consider the limit as $m\rightarrow \infty $ of 
\begin{equation*}
\frac{\cri(\eta ^{\circledast m})}{m}\text{ and }
\frac{\cri\left(z\circledast \eta ^{\circledast m}\right)}{m}
\end{equation*}
where $\eta $ \ and $z$ are both homology or cohomology classes with $\eta $
nonnilpotent, and where $\ast $ is the appropriate loop product. Because $%
\deg (\eta ^{\circledast m})$ and
$\deg (z\circledast \eta ^{\circledast m})$ are approximately
linear, the above limits exist if and only if the limits 
\begin{equation*}
\frac{\cri(\eta ^{\circledast m})}{\deg (\eta ^{\circledast m})}\text{
and }
\frac{\cri\left(z\circledast \eta ^{\circledast m}\right)}
{\deg (z\circledast \eta^{\circledast m})}
\end{equation*}%
exist. (If $G$ is a field, these limits are determined by the Resonance
Theorem.)

%

\ Let $U,Z\in H_{\ast }(\Lambda )$ with $\deg U>n\,.$ We define the \emph{%
mean levels } 
\begin{align*}
\overline{\mu _{U}}(Z)& =\overline{\mu _{g,G,U}}(Z)=\limsup_{m\rightarrow
\infty }\,\cri_{g,G}(U^{\bullet m}\bullet Z)/m \\
\underline{\mu _{U}}(Z)& =\underline{\mu _{g,G,U}}(Z)=\liminf_{m\rightarrow
\infty }\,\cri_{g,G}(U^{\bullet m}\bullet Z)/m
\end{align*}%
%
%
%
%

When the two limits coincide we denote the common limit $\mu _{U}(Z)$.


\begin{lemma}
\label{lem:meanlevel} 
If $j\geq 0,$ and $U,X,Y\in H_{\ast }(\Lambda ),$ then 
\begin{eqnarray}
\overline{\mu _{U}}(X) &=&\overline{\mu _{U}}(U^{\bullet j}\bullet X)
\label{eq:AAA} \\
\overline{\mu _{U}}(X\bullet Y) &\leq &\min \{\overline{\mu _{U}}(X),%
\overline{\mu _{U}}(Y)\} \label{eq:CD}\\
\overline{\mu _{U}}(X) &\leq &\overline{\mu _{U}}(U) \label{eq:AB}
\end{eqnarray}%
Similar inequalities also hold for $\underline{\mu _{U}}$ and $\mu _{U}.$
\end{lemma}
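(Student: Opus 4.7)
The plan is to derive all three inequalities from the single Chas--Sullivan bound $\cri(A\bullet B)\leq \cri(A)+\cri(B)$ combined with standard $\limsup$ manipulations, and in the order (1), (2), (3). The $\underline{\mu_U}$ and $\mu_U$ versions then follow from the same arguments with $\limsup$ replaced by $\liminf$ (respectively by $\lim$, when both sides agree), using the elementary inequality $\liminf(a_m+b_m)\leq \liminf a_m + \limsup b_m$ in place of subadditivity of $\limsup$.

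For (1), I would simply reindex: by definition,
$$\overline{\mu_U}(U^{\bullet j}\bullet X)=\limsup_{m\to\infty}\frac{\cri(U^{\bullet(m+j)}\bullet X)}{m}=\limsup_{k\to\infty}\frac{k}{k-j}\cdot\frac{\cri(U^{\bullet k}\bullet X)}{k},$$
and since $\cri\geq 0$ and $k/(k-j)\to 1$, this last limit superior equals $\overline{\mu_U}(X)$. For (2), I apply the basic inequality with $A=U^{\bullet m}\bullet X$ and $B=Y$ to get
$$\frac{\cri(U^{\bullet m}\bullet X\bullet Y)}{m}\leq\frac{\cri(U^{\bullet m}\bullet X)}{m}+\frac{\cri(Y)}{m}.$$
The last term tends to $0$, so taking $\limsup$ yields $\overline{\mu_U}(X\bullet Y)\leq\overline{\mu_U}(X)$, and swapping the roles of $X$ and $Y$ gives the symmetric bound; minimizing gives (2). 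Finally, (3) follows by combining (1) and (2): applying (2) with the pair $(X,U)$ gives $\overline{\mu_U}(X\bullet U)\leq\overline{\mu_U}(U)$, and (1) with $j=1$, together with commutativity of $\bullet$ up to sign, identifies $\overline{\mu_U}(X\bullet U)$ with $\overline{\mu_U}(X)$.

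The only mildly delicate point, and the one I would be most careful about, is the reindexing step in (1): one must know that $\cri(U^{\bullet k}\bullet X)/(k-j)$ and $\cri(U^{\bullet k}\bullet X)/k$ have the same $\limsup$. This follows from the fact that $\cri\geq 0$ and grows at most linearly in $k$, which is guaranteed by Gromov's bound $\cri(Z)<\deg(Z)/C_{1}$ recalled in Section~\ref{sec:discussion}. Otherwise the lemma is a routine formal consequence of the critical level inequality for the loop product, and no serious obstacle arises.
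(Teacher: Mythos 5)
Your argument is correct and follows the same route as the paper: Equation~\ref{eq:AAA} by reindexing the defining $\limsup$, Equation~\ref{eq:CD} directly from the subadditivity estimate~\ref{eq:XY} together with $\cri(Y)/m\to 0$, and Equation~\ref{eq:AB} by combining the first two (the paper states the first and third "follow from the definitions" and from the other two, while you spell out the bookkeeping). One small simplification: the identification of $\limsup_k \cri(U^{\bullet k}\bullet X)/(k-j)$ with $\limsup_k\cri(U^{\bullet k}\bullet X)/k$ needs only nonnegativity of $\cri$ and $k/(k-j)\to 1$, so the appeal to Gromov's linear bound, while valid, is not actually required.
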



\begin{proof}
Equation~\ref{eq:AAA} follows easily from the definitions. 
Equation~\ref{eq:AB} follows from the first two equations. 
We obtain Equation~\ref{eq:CD}
as follows: 
\begin{align*}
\overline{\mu _{U}}(X\bullet Y)& =
\limsup_{m\to \infty } \cri\left(U^{\bullet m}\bullet X\bullet Y\right)/m \\
& \leq \limsup_{m\rightarrow \infty }\left\{ \cri
(U^{\bullet m}\bullet X)/m+\cri(Y)/m\right\} \\
& =\overline{\mu _{U}}(X).
\end{align*}
\end{proof}

We also have mean levels for cohomology; 
\begin{equation*}
\overline{\mu _{\tau }}(z)=\limsup_{m\rightarrow \infty }\,
\cri\left(\tau ^{\circledast m}\circledast z\right)/m
\enspace;\enspace\underline{\mu
_{\tau }}(z)=\liminf_{m\rightarrow \infty }\cri(\tau
^{\circledast m}\circledast z)/m
\end{equation*}%
If the limits coincide, we denote the common limit $\mu _{\tau }(z)$. \ If $%
j\geq 0$, and $\tau ,x,y\in H^{\ast }(\Lambda ,\Lambda ^{0})$, then
similarly, 
\begin{align}
\overline{\mu _{\tau }}(x)& =\overline{\mu _{\tau }}(\tau ^{\circledast
j}\circledast x)  \label{eq:EF} \\
\overline{\mu _{\tau }}(x\circledast y)& \geq \min \{\overline{\mu _{\tau }}%
(x),\overline{\mu _{\tau }}(y)\}  \notag \\
\overline{\mu _{\tau }}(x)& \geq \overline{\mu _{\tau }}(\tau )\,.  \notag
\end{align}%
Similar inequalities also hold for $\underline{\mu _{\tau }}$ and $\mu
_{\tau }$. 

\begin{lemma}
\label{lem:powers} \textrm{\textsc{(Powers Lemma) }} \, If $U\in
H_{\ast}(\Lambda)\,,$ then $\overline{\mu_{U}}(U)=\underline{\mu_{U}}(U)$
and $\mu_{U}(U)$ exists. \ Moreover, 
\begin{equation*}
\mu_{U}(U)\leq \cri(U^{\bullet m})/m
\end{equation*}
for all $m\geq 1\,.$

If $\tau\in H^{\ast}(\Lambda,\Lambda^{0})\,,$ then $\overline{\mu_{\tau}}%
(\omega)=\underline{\mu_{\tau}}(\tau)$ and $\mu_{\omega}(\omega)$ exists. \
Moreover, 
\begin{equation*}
\mu_{\tau}(\tau)\geq \cri(\tau^{\circledast m})/m
\end{equation*}
for all $m\geq 1\,.$
\end{lemma}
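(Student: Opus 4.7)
My plan is to apply Fekete's subadditive lemma to the sequence $a_m := \cri(U^{\bullet m})$. First I would verify subadditivity using the basic inequality~(\ref{eq:XY}):
\begin{equation*}
a_{m+k} = \cri(U^{\bullet m} \bullet U^{\bullet k}) \leq \cri(U^{\bullet m}) + \cri(U^{\bullet k}) = a_m + a_k.
\end{equation*}
Critical levels on $H_*(\Lambda)$ are nonnegative, so $(a_m)$ is a subadditive sequence in $[0, \infty]$, and Fekete's lemma yields that $\lim_{m \to \infty} a_m/m$ exists and equals $\inf_{m \geq 1} a_m/m$ (finite as soon as $a_1 < \infty$).

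Next I would identify this limit with $\mu_U(U)$. Since $\cri(U^{\bullet m} \bullet U)/m = a_{m+1}/m$ and $a_{m+1}/m = \bigl(a_{m+1}/(m+1)\bigr) \cdot \bigl((m+1)/m\bigr)$, both the $\limsup$ and the $\liminf$ defining $\overline{\mu_U}(U)$ and $\underline{\mu_U}(U)$ agree with $\lim a_m/m$. Hence $\mu_U(U)$ exists, and its identification with $\inf a_m/m$ immediately gives the desired bound
\begin{equation*}
\mu_U(U) = \inf_{m \geq 1} \frac{\cri(U^{\bullet m})}{m} \leq \frac{\cri(U^{\bullet m})}{m} \qquad \text{for all } m \geq 1.
\end{equation*}

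For the cohomology half, the argument is formally dual. I would set $b_m := \cri(\tau^{\circledast m})$ and observe that the cohomology half of~(\ref{eq:XY}) gives superadditivity, $b_{m+k} \geq b_m + b_k$. Applying Fekete's lemma to $-b_m$ (equivalently, using its superadditive form) shows that $\mu_\tau(\tau) = \lim_{m \to \infty} b_m/m = \sup_{m \geq 1} b_m/m$ exists in $(0, \infty]$ and dominates each ratio $b_m/m$, yielding $\mu_\tau(\tau) \geq \cri(\tau^{\circledast m})/m$ for all $m \geq 1$.

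I do not expect a serious obstacle: the content of the lemma is essentially that~(\ref{eq:XY}) fits precisely into Fekete's framework. The only technical care needed is for the edge case where $U^{\bullet m_0}$ or $\tau^{\circledast m_0}$ vanishes for some $m_0$; in that situation the convention $\cri(0) = \infty$ on the cohomology side (and the analogous convention on the homology side) makes the remaining stated inequalities vacuously true, so the conclusion still holds.
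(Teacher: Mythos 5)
Your proof is correct and is essentially identical to the paper's, which simply cites Fekete's Lemma together with the subadditivity of $\cri(U^{\bullet m})$ and the superadditivity of $\cri(\tau^{\circledast m})$; you have just written out the details (including the harmless shift $a_{m+1}/m \to a_m/m$) that the paper leaves to the reader.
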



\begin{proof}
The lemma follows from Fekete's Lemma~\cite{PS} 
and the fact that the sequences and 
$\cri(U^{\bullet m})$ and $
\cri(\tau
^{\circledast m})$ are subadditive and superadditive. 
\end{proof}

\textsc{Index interval}

Fix a metric on $M$ and fix $G$, either a field or $\mathbb{Z}$. 
The same arguments used in~\cite[Thm.6.2]{Rad95} show the following: 

\begin{proposition}
\label{pro:muX} Let $U,X\in H_{\ast}(\Lambda)$, with $\deg U>n$. \ If $%
t\in\lbrack\underline{\mu_{U}}(X),\overline{\mu_{U}}(X)]$ is nonzero (which
implies that $U$ is nonnilpotent), then there is a sequence $\gamma_{m}$ of
closed geodesics on $M$ whose mean frequencies $\overline {\alpha}_{m}$
converge to $(\deg U-n)/t$.

Let $\tau ,x\in H^{\ast }(\Lambda ,\Lambda ^{0}).$ \ If $t\in \lbrack 
\underline{\mu _{\tau }}(x),\overline{\mu _{\tau }}(x)]$ is finite (which
implies that $\tau $ is nonnilpotent), then there is a sequence $\gamma _{i}$
of closed geodsics on $M$ whose mean frequencies $\overline{\alpha }_{i}$
converge to $(\deg \tau +1-n)/t$.
\end{proposition}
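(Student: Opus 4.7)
The plan is to attach to each index $m$ a closed geodesic realizing the critical level $\cri(U^{\bullet m}\bullet X)$, invoke Lemma~\ref{lem:resonant_iterates} (resonant iterates) to obtain a geodesic-level resonance inequality, and then extract via a crossings argument a subsequence of indices whose $\gamma_m$ have mean frequencies tending to $(\deg U - n)/t$ for any prescribed $t$ in $[\underline{\mu_U}(X),\overline{\mu_U}(X)]$.

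Concretely, set $L_m := \cri(U^{\bullet m}\bullet X)$, $d_m := \deg(U^{\bullet m}\bullet X) = m(\deg U - n) + \deg X$, and $a_m := L_m/m$. Since $L_m$ is a critical value of $F$, there is a closed geodesic $\gamma_m$ of length $L_m$; by the definition of $\cri$ as infimal support-level, $U^{\bullet m}\bullet X$ has a nonzero image in the local homology~\eqref{eq:local_homology} at some such $\gamma_m$. Lemma~\ref{lem:resonant_iterates} then yields $|d_m - \overline{\alpha}_{\gamma_m}L_m|\le n$, so along any subsequence on which $a_m\to c\neq 0$ one has $L_m\to\infty$ and $\overline{\alpha}_{\gamma_m}\to (\deg U - n)/c$.

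The heart of the argument is to show that every $t\in[\underline{\mu_U}(X),\overline{\mu_U}(X)]$ with $t\neq 0$ is a subsequential limit of $a_m$. The endpoints are immediate from the definitions of $\liminf$ and $\limsup$. For $t$ strictly between them, I exploit the subadditivity $L_{m+1}\le L_m + \cri(U)$ coming from~\eqref{eq:XY}, which rearranges to
\[
a_{m+1}-a_m \;\le\; \frac{\cri(U)-a_m}{m+1} \;=\; O(1/m),
\]
since $a_m$ is bounded above by $\cri(U)+\cri(X)/m$. By the choice of $t$, both $\{m:a_m<t\}$ and $\{m:a_m\ge t\}$ are infinite; starting from an arbitrarily large $m_0$ in the former and walking forward to the first subsequent index $m'$ in the latter furnishes a ``crossing from below'' $a_{m'-1}<t\le a_{m'}$, hence $0\le a_{m'}-t\le a_{m'}-a_{m'-1}=O(1/m')$. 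This produces infinitely many indices $m'\to\infty$ with $a_{m'}\to t$, and the corresponding geodesics $\gamma_{m'}$ satisfy $\overline{\alpha}_{\gamma_{m'}}\to (\deg U - n)/t$.

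The cohomology case is the formal dual: the superadditivity side of~\eqref{eq:XY} gives $a_m - a_{m+1}=O(1/m)$, so ``crossings from above'' take the role played above by crossings from below, and the cohomological version of Lemma~\ref{lem:resonant_iterates} supplies the matching resonance inequality. The principal obstacle throughout is the one-sided nature of the jump estimate: $a_m$ can drop abruptly because no lower bound on $L_{m+1}-L_m$ is available, so the argument must be organized around upward crossings only, with the infinitude of sub- and superlevel sets of $t$ guaranteeing that such crossings accumulate at the desired value.
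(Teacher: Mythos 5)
Your proposal is correct and follows the same route as the paper: establish the one-sided jump estimate $a_{m+1}-a_m\le\varepsilon_m\to 0$ from the subadditivity in~\eqref{eq:XY}, conclude that the set of limit points of $\{a_m\}$ is an interval containing every such $t$, and then translate via the Resonant Iterates Lemma~\ref{lem:resonant_iterates} to mean frequencies. The paper simply asserts the interval claim and normalizes by $\deg(U^{\bullet m}\bullet X)$ rather than by $m$; your explicit ``upward crossings'' argument is a welcome (and standard) way of filling in that assertion, but it is the same proof.
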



\begin{corollary}
\label{cor:muX} If $[\underline{\mu _{U}}(X),\overline{\mu _{U}}(X)]$ or $[%
\underline{\mu _{\tau }}(x),\overline{\mu _{\tau }}(x)]$ is not a point,
then $M$ has infinitely many closed geodesics. \ 
\end{corollary}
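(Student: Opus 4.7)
The plan is to deduce the corollary directly from Proposition~\ref{pro:muX} by a cardinality argument on mean frequencies. The key observation is that the mean frequency $\overline{\alpha}_\gamma$ depends only on the prime underlying $\gamma$ (since $\overline{\alpha}_\gamma = \overline{\alpha}_{\gamma^m}$ for all $m \geq 1$, as noted in Section~\ref{sec:discussion}). Hence if $M$ carries only finitely many geometrically distinct closed geodesics, then the set of all mean frequencies $\{\overline{\alpha}_\gamma\}$ is a finite subset of $\mathbb{R}$.

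First I would treat the homology case. Assume $[\underline{\mu_U}(X), \overline{\mu_U}(X)]$ is not a single point. Since $\deg U > n$, we have $\overline{\mu_U}(X) \geq \underline{\mu_U}(X) \geq 0$ (critical levels are nonnegative), so the interval contains infinitely many distinct positive real numbers $t_1 < t_2 < \cdots$ in its interior (or at the upper endpoint). For each such $t_i$, Proposition~\ref{pro:muX} produces a sequence of closed geodesics whose mean frequencies converge to
\begin{equation*}
a_i := (\deg U - n)/t_i.
\end{equation*}
The map $t \mapsto (\deg U - n)/t$ is strictly monotone on $(0,\infty)$, so the values $a_1, a_2, \ldots$ are pairwise distinct. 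But each $a_i$ is a limit point of the set $\{\overline{\alpha}_\gamma : \gamma \text{ closed geodesic on } M\}$. A finite set in $\mathbb{R}$ has no limit points other than its own elements, and cannot contain infinitely many distinct such limit points. Therefore $M$ must admit infinitely many geometrically distinct closed geodesics.

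The cohomology case is essentially identical: if $[\underline{\mu_\tau}(x), \overline{\mu_\tau}(x)]$ is a nondegenerate interval of finite values, we again pick infinitely many distinct $t_i$ in it, apply the cohomological part of Proposition~\ref{pro:muX} to obtain sequences of closed geodesics with mean frequencies converging to $(\deg \tau + 1 - n)/t_i$, and conclude by the same monotonicity-and-finiteness argument.

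There is no real obstacle here; the only point requiring minor care is checking that the $t_i$ can be chosen so that $(\deg U - n)/t_i$ (resp.\ $(\deg \tau + 1 - n)/t_i$) yields well-defined, distinct mean-frequency targets. This is immediate from strict monotonicity of $t \mapsto c/t$ on the positive reals once one observes that a nondegenerate interval $[\underline{\mu}, \overline{\mu}]$ with $\overline{\mu} > \underline{\mu} \geq 0$ meets $(0,\infty)$ in an infinite set.
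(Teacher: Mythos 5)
Your argument is correct and is the natural deduction the paper intends: the paper gives no separate proof of Corollary~\ref{cor:muX}, presenting it as an immediate consequence of Proposition~\ref{pro:muX}, and your cardinality argument spells out exactly why. One small terminological remark: ``a finite set in $\mathbb{R}$ has no limit points other than its own elements'' is slightly misleading --- a finite set has \emph{no} accumulation points at all --- but what you actually use, and what is correct, is that any convergent sequence taking values in a finite set is eventually constant, so its limit lies in that set; hence the infinitely many distinct targets $a_i=(\deg U-n)/t_i$ would all have to be mean frequencies of actual geodesics, contradicting finiteness. Also, if $\overline{\mu_U}(X)=+\infty$ one should pick the $t_i$ strictly inside the interval rather than at the upper endpoint, since Proposition~\ref{pro:muX} requires $t$ nonzero (homology) or finite (cohomology), but your phrase ``in its interior'' already covers this.
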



\begin{proof}
(\emph{of the Proposition:}) \newline
We have 
\begin{equation*}
\deg (X\bullet U^{\bullet m})=\deg X+m(\deg U-n).
\end{equation*}%
\ Thus 
\begin{align*}
\limsup_{m\rightarrow \infty }\cri(X\bullet U^{\bullet
m})/\deg (X\bullet U^{\bullet m})& =\overline{\mu _{U}}(X)/(\deg U-n) \\
\liminf_{m\rightarrow \infty }\cri(X\bullet U^{\bullet
m})/\deg (X\bullet U^{\bullet m})& =\underline{\mu _{U}}(X)/(\deg U-n)
\end{align*}%
The basic inequalities~\ref{eq:XY} imply that 
\begin{align*}
\frac{\cri(X\bullet U^{\bullet (m+1)})}{\deg (X\bullet
U^{\bullet (m+1)})}-\frac{\cri(X\bullet U^{\bullet m})}{\deg
(X\bullet U^{\bullet m})}& \leq \frac{\cri(U)}{\deg
(X\bullet U^{\bullet m})} \\
& \leq \frac{1}{m}\frac{\cri(U)}{(\deg U-n)}.
\end{align*}%
The sequence 
\begin{equation*}
a_{m}=\frac{\cri(X\bullet U^{\bullet m})}{\deg (X\bullet
U^{\bullet m})}
\end{equation*}%
has the property that $a_{m+1}-a_{m}\leq \varepsilon _{m}$, with $%
\varepsilon _{m}\rightarrow 0$. \ It follows that the set of limit points of
this sequence is an interval; thus every point in 
\begin{equation*}
\left[ \frac{\underline{\mu _{U}}(X)}{\deg U-n}\,,\,\frac{\overline{\mu _{U}}(X)%
}{\deg U-n}\right]
\end{equation*}%
is a limit point of the sequence.  For each $m$ there is a closed geodesic 
$\gamma _{m}$ of length $\cri(X\bullet U^{\bullet m})$, whose index $\lambda
_{m}$ satisfies%
\begin{equation*}
\deg (X\bullet U^{\bullet m})-2n+1\leq \lambda _{m}\leq \deg (X\bullet
U^{\bullet m}).
\end{equation*}%
Thus (inverting, and using the fact that $l(\gamma _{m})$ and $\deg
(X\bullet U^{\bullet m})$ $\rightarrow \infty ),$ the sequence $\{\lambda
_{m}/\ell(\gamma _{m})\}$ has the same limit points as $\{\deg (X\bullet
U^{\bullet m})/cr(X\bullet U^{\bullet m})\},$ i.e. the interval 
\begin{equation*}
\left[ \frac{\deg U-n}{\overline{\mu _{U}}(X)}\,,
\,\frac{\deg U-n}{\underline{%
\mu _{U}}(X)}\right] \,.
\end{equation*}%
Using the \emph{Resonant Iterates Lemma}~\ref{lem:resonant_iterates} we find 
\begin{equation*}
\left\vert \frac{\mathrm{ind}\gamma ^{m}}{l(\gamma _{m})}-\overline{\alpha }%
_{m}\right\vert \leq \frac{n}{l(\gamma _{m})}
\end{equation*}%
and thus the sequence $\{\overline{\alpha }_{m}\}$ has the same interval of
limit points, and the lemma is proved for homology. \ 

The argument for cohomology is similar; in particular $\deg (x\circledast
\tau ^{\circledast m})=\deg x+m(\deg \tau +n-1)$ and, using the \emph{Powers
Lemma~\ref{lem:powers}}: 
\begin{align*}
\frac{\cri(x\circledast \tau ^{\circledast (m+1)})}{\deg
(x\circledast \tau ^{\circledast (m+1)})}-\frac{\cri%
(x\circledast \tau ^{\circledast m})}{\deg (x\circledast \tau ^{\circledast
m})}& \geq -\frac{\cri(x\circledast \tau ^{\circledast m})%
}{m^{2}(\deg \tau +n-1)} \\
& \geq -\frac{\overline{\xi _{\tau }}(x)}{m(\deg \tau +n-1)}
\end{align*}%
so that if $\overline{\xi _{\tau }}(x)<\infty $, the sequence $a_{m}=$ $
\cri (x\circledast \tau ^{\circledast m})/\deg (x\circledast
\tau ^{\circledast m})$ has the property that $a_{m}-a_{m+1}\leq \varepsilon
_{m}$, with $\varepsilon _{m}\rightarrow 0$.
\end{proof}


\section{Proof of the Interval Theorem and Resonance Theorem}

\label{sec:proof}

Our first task in this section is to prove:
\begin{theorem}
\label{thm:interval} \textrm{\textsc{(Interval Theorem) }} Fix a metric on $%
S^{n},n>2,$ and let $G\,$\ be a field or $\mathbb{Z}.$ Let 
\begin{equation*}
\mu ^{-}=:\lim_{m\rightarrow \infty }\frac{\cri_{\mathbb{Z}%
}(\omega ^{\circledast m})}{m}\enspace;\enspace\mu ^{+}=\lim_{m\rightarrow
\infty }\frac{\cri_{\mathbb{Z}}(\Theta ^{\bullet m})}{m}
\end{equation*}%
where $\Theta $ and $\omega $ are the fundamental nonnilpotent elements in
loop homology and cohomology, cf. Lemma~\ref{lem:B} below. Any sequence of
homology or cohomology classes $\{X_{k}\}$ with $X_{k}\in H_{\ast }\left(
\Lambda ,\Lambda ^{0};G\right) $ or $X_{k}\in H^{\ast }\left( \Lambda
,\Lambda ^{0};G\right) ,$ with $\lim_{k\rightarrow \infty }\deg X_{k}=\infty
,$ has 
\begin{equation*}
\left[ \liminf_{k\rightarrow \infty }
\frac{\cri_{G}(X_{k})}{\deg X_{k}}\,,\,
\limsup_{k\rightarrow \infty }
\frac{\cri_{G}(X_{k})}{\deg X_{k}}\right] \,
\subseteq \,\left[ \frac{\mu ^{-}}{2(n-1)}\,,\,\frac{\mu ^{+}}{2(n-1)}\right]
\end{equation*}
\end{theorem}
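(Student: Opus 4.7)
\smallskip

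The plan is to sandwich $\cri_G(X_k)$ from above by $\cri_{\mathbb{Z}}(\Theta^{\bullet m_k})$ and from below by $\cri_{\mathbb{Z}}(\omega^{\circledast m_k})$ for an integer $m_k\asymp \deg(X_k)/2(n-1)$; after dividing by $\deg(X_k)$, the Powers Lemma~\ref{lem:powers} extracts the asserted interval bounds. The factor $2(n-1)$ enters because Lemma~\ref{lem:B} will record the degrees of the fundamental nonnilpotent classes: $\deg\Theta-n = \deg\omega+(n-1) = 2(n-1)$, so iterating under $\bullet$ (respectively $\circledast$) raises degree in steps of $2(n-1)$. The first maneuver is to reduce to $\mathbb{Z}$-coefficients: the Lemma on the Effect of Coefficients~\ref{lem:effect} sandwiches every $\cri_G$ at a given filtration between the critical levels of $\mathbb{Z}$-classes at the same filtration, which changes the sequence $\cri(X_k)/\deg(X_k)$ by an $O(1)$ amount in its numerator and therefore does not affect $\liminf$ or $\limsup$.

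For the upper bound $\limsup \cri(X_k)/\deg(X_k)\le \mu^+/2(n-1)$, I would invoke the structural content of Lemma~\ref{lem:B} (together with Lemma~\ref{lem:A} or~\ref{lem:C}) to write a homology class $X_k$ in the form $X_k = Y_k\bullet \Theta^{\bullet m_k}$, where $Y_k$ ranges over a \emph{finite} family of generators of bounded degree and $m_k$ is forced by $\deg(X_k)=n+2m_k(n-1)+\deg(Y_k)$. The basic inequality~\ref{eq:XY} then yields
\[
\cri(X_k)\ \le\ \cri(Y_k)+\cri(\Theta^{\bullet m_k})\ \le\ C + m_k\bigl(\mu^+ + o(1)\bigr),
\]
with the last step by the Powers Lemma. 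Dividing by $\deg(X_k)=2m_k(n-1)+O(1)$ delivers the bound. For the lower bound I would dualize: pass to a cohomology class $x_k$ with $\cri(x_k)\le \cri(X_k)$ and $\deg(x_k)$ close to $\deg(X_k)$ via the Duality Lemma~\ref{lem:duality} (or the subspace filtration in the remark following it when ranks exceed one), factor $x_k = y_k\circledast \omega^{\circledast m_k}$ using the cohomology part of Lemma~\ref{lem:B}, and apply the superadditive half of~\ref{eq:XY}:
\[
\cri(x_k)\ \ge\ \cri(y_k)+\cri(\omega^{\circledast m_k})\ \ge\ m_k\bigl(\mu^- - o(1)\bigr)-O(1).
\]
For a cohomology sequence the two arguments are simply interchanged.

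The main obstacle is the factorization step that feeds into both the upper and lower bound. One needs $H_*(\Lambda S^n)$, viewed as a module over $\mathbb{Z}[\Theta]$ under $\bullet$, and $H^*(\Lambda S^n,\Lambda^0)$, viewed as a module over $\mathbb{Z}[\omega]$ under $\circledast$, to be generated in a bounded range of degrees, so that the residual classes $Y_k$ and $y_k$ have uniformly bounded $\cri$; without this finiteness the sandwich collapses. The subsidiary difficulty is choosing the filtration level of the dual class $x_k$ in the lower bound so that its Kronecker pairing with $X_k$ is nontrivial without inflating $\deg(x_k)$ beyond what the factorization tolerates. Both obstacles are the province of Lemmas~\ref{lem:A}--\ref{lem:C}, and once those are in hand the rest of the argument is the bookkeeping sketched above.
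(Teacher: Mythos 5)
Your proposal matches the paper's proof in essentially every structural respect: factor $X_k = Y_k\bullet\Theta^{\bullet m_k}$ with $Y_k$ drawn from the finite generating set of Lemma~\ref{lem:B}, apply the subadditivity of~\ref{eq:XY} for the upper bound, dualize via Lemma~\ref{lem:duality} and Equation~\ref{eq:9999} to obtain $\cri(X_k)=\cri(y_k\circledast\omega^{\circledast m_k})$ and then use superadditivity for the lower bound, reduce $G$ to $\mathbb{Z}$ via Lemma~\ref{lem:effect}, and pass to limits with the Powers Lemma~\ref{lem:powers}. The one point worth tightening: over a field, Lemma~\ref{lem:A} guarantees the relevant homology groups have rank at most one, so the Duality Lemma gives you the clean \emph{equality} $\cri_G(X_k)=\cri_G(y_k\circledast\omega^{\circledast m_k})$ rather than an inequality, and the subspace-filtration remark you mention as a fallback is never actually needed here. (Also a small bookkeeping slip: $\deg(Y\bullet\Theta^{\bullet m})=\deg Y + 2m(n-1)$, without the extra $n$; this does not affect the asymptotics.)
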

Then we will use the interval theorem to prove the following theorem already
stated as Theorem~\ref{thm:resonance} in the Introduction:
\begin{theorem}
\label{thm:resonance5} 
\thinspace \textrm{\textsc{(Resonance Theorem)}} \ A
Riemannian or Finsler metric $g$ on $S^{n},n>2$, and a field $G$ determine a 
\emph{global mean frequency } $\overline{\alpha }=:\overline{\alpha }%
_{g,G}>0 $ with the property that 
\begin{equation*}
\deg (X)-\overline{\alpha }\,\cri(X)
\end{equation*}%
is bounded as $X$ ranges over all nontrivial homology and cohomology classes
on $\Lambda $ with coefficients in $G$.\newline
\end{theorem}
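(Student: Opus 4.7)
The plan is to deduce Theorem~\ref{thm:resonance5} from the Interval Theorem~\ref{thm:interval} together with the module structure of the loop (co)homology of $S^n$ supplied by Lemmas~\ref{lem:A}, \ref{lem:B}, \ref{lem:C}. The Interval Theorem already pins the limit points of $\cri(X_k)/\deg X_k$ into $[\mu^-/2(n-1),\mu^+/2(n-1)]$, so the proof splits into two tasks: collapse this interval to a point, and upgrade the resulting asymptotic statement into the $O(1)$ bound demanded by the Resonance Theorem.

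I would first show $\mu^-=\mu^+$ by exploiting duality between the fundamental nonnilpotent classes $\omega$ and $\Theta$ of Lemma~\ref{lem:B}. Lemma~\ref{lem:C} (which describes the Chas-Sullivan and Goresky-Hingston products on $\Lambda S^n$) should guarantee that, after matching the powers so that the degrees coincide, the Kronecker pairing of $\omega^{\circledast m}$ with the corresponding $\Theta^{\bullet k(m)}$ is nontrivial. The Duality Lemma~\ref{lem:duality} then gives $\cri(\omega^{\circledast m})=\cri(\Theta^{\bullet k(m)})$; dividing by the appropriate index and passing to the limit forces $\mu^-=\mu^+=:\mu$. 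Set $\overline{\alpha}=2(n-1)/\mu$, which is positive by the Gromov estimates recalled in Section~\ref{sec:discussion}.

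With $\mu^-=\mu^+=\mu$ in hand, the basic inequalities in Equation~\ref{eq:XY} become tight on the fundamental powers: Fekete's lemma applied to the superadditive sequence $\cri(\omega^{\circledast m})$ and to the subadditive sequence $\cri(\Theta^{\bullet m})$ gives $\cri(\omega^{\circledast m})\leq m\mu$ and $\cri(\Theta^{\bullet k})\geq k\mu$, and the Duality Lemma then forces equality throughout. To pass to a general class I would use Lemma~\ref{lem:C} to realize $H_*(\Lambda;G)$ as a finitely generated module over the Chas-Sullivan ring generated by $\Theta$, and symmetrically $H^*(\Lambda,\Lambda^0;G)$ as a finitely generated module over the ring generated by $\omega$. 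Writing a nontrivial homology class $X$ as $Y_i\bullet\Theta^{\bullet k}$ for one of finitely many base classes $Y_i$, the inequality in Equation~\ref{eq:XY} yields the upper bound $\cri(X)\leq\cri(Y_i)+k\mu$; the matching lower bound comes from picking a Kronecker-dual cohomology representative of the form $y_j\circledast\omega^{\circledast k}$, applying the Duality Lemma to identify critical levels, and invoking superadditivity under $\circledast$. Since $\deg(Y_i\bullet\Theta^{\bullet k})=\deg Y_i+k\cdot 2(n-1)$, combining these sandwich bounds gives $|\deg X-\overline{\alpha}\cri(X)|\leq C$ uniformly, with $C$ depending only on the finite set of base classes.

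The main obstacle is the bootstrap from the $o(\cri X)$ asymptotic estimate coming out of the Interval Theorem to the $O(1)$ bound required here. This rests squarely on the module-finite structure of the loop (co)homology of $S^n$ under the Chas-Sullivan and Goresky-Hingston products, which I expect is precisely the content of Lemma~\ref{lem:C}; without it only the weaker asymptotic statement would follow. A secondary technical point, handled by Lemma~\ref{lem:effect}, is the comparison between critical levels for field coefficients $G$ and integer coefficients (in terms of which $\mu^\pm$ are defined in the Interval Theorem).
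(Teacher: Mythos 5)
There is a genuine gap in your first step, and it stems from a misreading of Lemma~\ref{lem:C}. You characterize Lemma~\ref{lem:C} as ``describing the Chas-Sullivan and Goresky-Hingston products'' and later as supplying ``the module-finite structure of the loop (co)homology of $S^n$''; both of those are in fact the content of Lemma~\ref{lem:B}. Lemma~\ref{lem:C} is about the operator $\Delta$ induced by the $S^1$-action: it says $\Delta(W\bullet\Theta^{\bullet m})=(2m+1)\Theta^m$ when $n$ is odd (resp.\ $(mk+1)\Theta^m$ when $n$ is even). This operator does not appear anywhere in your proposal, and it is the indispensable ingredient.

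Your route to $\mu^-=\mu^+$ does not work. You propose to ``match the powers so that the degrees coincide'' and apply the Duality Lemma~\ref{lem:duality} directly to $\omega^{\circledast m}$ and some $\Theta^{\bullet k(m)}$. But $\deg\omega^{\circledast m}=(2m-1)(n-1)$ while $\deg\Theta^{\bullet k}=(2k+1)(n-1)+1$; equating them forces $(2m-2k-2)(n-1)=1$, which is impossible for $n>2$. So there is never a Kronecker pairing between a pure power of $\omega$ and a pure power of $\Theta$. The available dualities (Equations~\ref{eq:QXR}, \ref{eq:99}) pair $\omega^{\circledast(m+1)}$ with $W\bullet\Theta^{\bullet m}$ and $\theta\circledast\omega^{\circledast(m-1)}$ with $\Theta^{\bullet m}$; combined with subadditivity these only reprove $\mu_G^-\le\mu_G^+$ (already Equation~\ref{eq:33}), not the reverse inequality. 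To get $\mu_G^+\le\mu_G^-$ the paper uses that $\cri(\Delta X)\le\cri(X)$ (Equation~\ref{eq:Delta}): when the coefficient in Lemma~\ref{lem:C} is nonzero in $G$, one obtains $\cri(\Theta^m)\le\cri(W\bullet\Theta^{\bullet m})=\cri(\omega^{\circledast(m+1)})$. One must also handle the case when that coefficient vanishes mod $\operatorname{char}(G)$: the paper observes it cannot vanish for two consecutive $m$, and that the sequences $\cri(\Theta^{\bullet m})$, $\cri(\omega^{\circledast m})$ are monotone, yielding Equation~\ref{eq:PP}: $\cri(\Theta^{m-1})\le\cri(\omega^{\circledast(m+1)})$. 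Without the $\Delta$ operator and this coefficient argument, you have no bridge between the $\Theta$-powers and the $\omega$-powers.

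Secondary points: the ``bootstrap from $o(\cri X)$ to $O(1)$'' is not the main obstacle once $\mu_G^-=\mu_G^+$ is known; the module structure from Lemma~\ref{lem:B} and the sandwich inequalities (your Equation~\ref{eq:QQ} argument, which is correct and does match the paper's Equations~\ref{eq:87}--\ref{eq:88}) give the $O(1)$ bound quite directly. Note also that the paper carefully does not assert $\mu^-_{\mathbb{Z}}=\mu^+_{\mathbb{Z}}$; it is an open question whether they can differ. What is established, and what the Resonance Theorem uses, is $\mu_G^-=\mu_G^+$ for each field $G$.
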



Fix a metric $g$ on $M=S^{n}$, $n>2$. The interval theorem will follow
from basic properties of loop products, the results of Sections 1 and 2, and
two lemmas. The lemmas,   Lemma~\ref{lem:A} and Lemma~\ref{lem:B}, are
proved in~\cite{GH}, and will be discussed in Section~\ref{sec:proof_Lemma_C}.
 For the Resonance Theorem~\ref{thm:resonance5} 
we need a third lemma; Lemma~\ref{lem:C} will
be proved in Section~\ref{sec:proof_Lemma_C}.

\medskip\ \ 

\begin{lemma}
\label{lem:A} For each $j,$ each of $H^{j}(\Lambda,\Lambda^{0};\mathbb{Z})$, 
$H^{j}(\Lambda;\mathbb{Z}),$ and $H_{j}(\Lambda;\mathbb{Z})$ is one of the
following: $0$, $\mathbb{Z}$, or $\mathbb{Z}_{2}=:\mathbb{Z}/2\mathbb{Z}$.
If $G$ is a field, 
\begin{equation*}
\mathrm{rank\,} H^{j}(\Lambda,\Lambda^{0};G)\leq \mathrm{rank\,}
H^{j}(\Lambda;G)=\mathrm{rank\,} H_{j}(\Lambda;G)\leq 1
\end{equation*}
\end{lemma}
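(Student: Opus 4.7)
The main step is to compute $H^*(\Lambda S^n;\mathbb{Z})$ and $H_*(\Lambda S^n;\mathbb{Z})$; the relative statement then follows from the split inclusion of constant loops, and the field statement from universal coefficients. The plan is to use the Serre spectral sequence of the evaluation fibration $\Omega S^n\hookrightarrow \Lambda S^n\stackrel{\mathrm{ev}_0}{\longrightarrow} S^n$. Since $n>2$, $S^n$ is simply connected, so $E_2^{p,q}=H^p(S^n;\mathbb{Z})\otimes H^q(\Omega S^n;\mathbb{Z})$. By James's splitting $\Sigma\Omega S^n\simeq\bigvee_{k\geq 1}S^{k(n-1)+1}$, the integral (co)homology of $\Omega S^n$ consists of a single $\mathbb{Z}$ in each degree $k(n-1)$, $k\geq 0$, and zero elsewhere. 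Hence every $E_2$-entry is either $0$ or $\mathbb{Z}$, and the nonzero entries occur only at bidegrees $(0,k(n-1))$ and $(n,k(n-1))$. Because $n>2$ forces $(n-1)\nmid n$, the total degrees $k(n-1)$ and $\ell(n-1)+n$ never coincide, so each diagonal of $E_2$ carries at most one nontrivial summand.

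Next I would analyze the only potentially nonzero differential, $d_n:E_n^{0,k(n-1)}\to E_n^{n,(k-1)(n-1)}$, by parity. For $n$ odd the rational Sullivan model of $\Lambda S^n$ is $(\Lambda(x,\bar{x}),0)$ with $|x|=n$, $|\bar x|=n-1$, so the spectral sequence collapses at $E_2$ rationally; comparison with the path–loop fibration of $S^n$ together with the section $c:S^n\hookrightarrow \Lambda S^n$ given by constant loops upgrades this to integral collapse, and each surviving piece is a single $\mathbb{Z}$. For $n$ even the Sullivan model $(\Lambda(x,y,\bar x,\bar y),d)$ has $dy=x^2$ and $d\bar y=-2x\bar x$; integrally this forces $d_n$ to act on successive fiber generators as multiplication by a factor of exact $2$-adic valuation one, giving a $\mathbb{Z}$ in the kernel and a $\mathbb{Z}/2\mathbb{Z}$ in the cokernel. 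Combined with the single-summand-per-diagonal observation, $H^j(\Lambda S^n;\mathbb{Z})\in\{0,\mathbb{Z},\mathbb{Z}/2\mathbb{Z}\}$; the identical argument applied to the homology spectral sequence yields the same conclusion for $H_j$.

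For the relative group, $\mathrm{ev}_0\circ c=\mathrm{id}_{S^n}$, so the long exact sequence of the pair $(\Lambda,\Lambda^0)$ breaks into split short exact sequences
\begin{equation*}
0\to H^j(\Lambda,\Lambda^0;\mathbb{Z})\to H^j(\Lambda;\mathbb{Z})\to H^j(S^n;\mathbb{Z})\to 0,
\end{equation*}
exhibiting $H^j(\Lambda,\Lambda^0;\mathbb{Z})$ as a direct summand of $H^j(\Lambda;\mathbb{Z})$ and hence also in $\{0,\mathbb{Z},\mathbb{Z}/2\mathbb{Z}\}$. For a field $G$, the identical spectral sequence argument with $G$-coefficients gives at most one nontrivial $E_2$-entry per diagonal, whence $\mathrm{rank}_G H^j(\Lambda;G)\leq 1$; universal coefficients over a field yields $\mathrm{rank}\,H^j(\Lambda;G)=\mathrm{rank}\,H_j(\Lambda;G)$, and the split injection above bounds the relative rank.

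The main obstacle is pinning down the $d_n$ differential in the even case precisely enough to conclude that the cokernel is exactly $\mathbb{Z}/2\mathbb{Z}$ rather than a larger cyclic group. This requires controlling the multiplicative structure of the spectral sequence (or integrally lifting the rational Sullivan-model calculation), and is the essential computational content supplied by the reference to~\cite{GH}.
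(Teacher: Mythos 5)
Your proposal is correct in outline but takes a genuinely different route from the paper. The paper's proof of Lemma~\ref{lem:A} is, in effect, a citation: the additive structure of $H_{\ast}(\Lambda S^{n})$, $H^{\ast}(\Lambda S^{n})$ and $H^{\ast}(\Lambda,\Lambda^{0})$ is taken from Goresky--Hingston~\cite{GH} (and Cohen--Jones--Yan~\cite{CJY}), where it is obtained from the Morse-theoretic energy filtration on $\Lambda S^{n}$ equipped with the round metric; since the round $F$ is a perfect Morse--Bott function, the $\mathcal{E}^{1}$ page of that filtration (displayed in Section~\ref{sec:proof_Lemma_C}) already gives the homology, and the cyclic-group statement is read off directly. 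You instead use the Serre spectral sequence of the evaluation fibration $\Omega S^{n}\to\Lambda S^{n}\to S^{n}$, feed in $H_{\ast}(\Omega S^{n})$ via the James splitting, and observe that for $n>2$ the two columns $p=0$ and $p=n$ never share a total degree, so every diagonal carries a single cyclic group; the split short exact sequence coming from the constant-loop section then handles the relative group, and universal coefficients handles the field statement. This is a legitimate and more purely homotopy-theoretic derivation, not tied to any choice of metric, whereas the paper's route is geometric and sits naturally alongside the rest of its Morse-theoretic arguments. Two small caveats on your sketch. For $n$ odd, the cleanest way to pass from rational to integral collapse is simply that the $E_{2}$ page is torsion-free (each entry is $0$ or $\mathbb{Z}$), so a differential $\mathbb{Z}\to\mathbb{Z}$ that vanishes after $\otimes\,\mathbb{Q}$ vanishes integrally; the ``comparison with the path--loop fibration'' you invoke isn't really needed. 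For $n$ even, the Sullivan model is a rational object and cannot by itself ``force'' the integral $d_{n}$ to have 2-adic valuation exactly one; you need the divided-power algebra structure of $H^{\ast}(\Omega S^{n};\mathbb{Z})$ (or a Kudo-type transgression argument) to pin the coefficient to $\pm 2$. You flag this honestly as the remaining computational content, and indeed that is exactly the work the paper delegates to~\cite{GH}. So both proofs defer the same essential 2-torsion computation to a reference; yours simply locates it in a different, more classical, spectral sequence.
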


This lemma allows us to use the \emph{Duality Lemma~\ref{lem:duality} }, and
the \emph{Lemma}~\ref{lem:effect} \emph{on the effect of coefficients.} 

\begin{lemma}
\label{lem:B} Fix $G$, a field or $%
\mathbb{Z}
$. There is a homology class $\Theta \in H_{3n-2}(\Lambda ;G)$ and a
cohomology class $\omega \in H^{n-1}(\Lambda ,\Lambda ^{0};G),$ with the
property that $\Theta ^{\bullet m}$ is a generator of $H_{(2m+1)(n-1)+1}(%
\Lambda ;G)=G$, and $\omega ^{\circledast m}$ is a generator of $%
H^{(2m-1)(n-1)}(\Lambda ,\Lambda ^{0};G)=G.$ Multiplication with $\Theta $
and multiplication with $\omega $ are dual: 
\begin{equation}
x\circledast \omega ^{\circledast m}(Y\bullet \Theta ^{\bullet m})=x(Y)
\label{eq:9999}
\end{equation}%
for every $x\in H^{\ast }(\Lambda ;G)$, $Y\in H_{\ast }(\Lambda ,\Lambda
^{0};G)$ and $m\geq 0$. There is a finite set $Q$ of classes of degree $\leq
3n-2$ in $H_{\ast }(\Lambda ;G),$ and a finite set $q$ of classes in $%
H^{\ast }(\Lambda ,\Lambda ^{0};G)$ so that every class $X\in H_{j}(\Lambda
;G)$ with degree $j>n$ can be written uniquely in the form $X=aY\bullet
\Theta ^{\bullet m}$ for some $a\in G$ and $Y\in Q,$ and every class $x\in
H^{j}(\Lambda ,\Lambda ^{0};G)$ can be written uniquely in the form $%
x=ay\circledast \omega ^{\circledast m}$ for some $a\in G$ and $y\in q.$ If $%
G$ is a field, these can be chosen so that for every $Y\in Q$ (resp. $y\in q$%
) there is a unique $y\in q$ (resp. $Y\in Q$ ) with $y(Y)=1\,.$
\end{lemma}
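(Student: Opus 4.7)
The plan is to derive Lemma~\ref{lem:B} from the computation in \cite{GH} of the two ring structures (the Chas--Sullivan product $\bullet$ on $H_*(\Lambda S^n;G)$ and the cohomology loop product $\circledast$ on $H^*(\Lambda S^n,\Lambda^0;G)$) combined with the additive picture of Lemma~\ref{lem:A}. The additive calculation comes from the Serre spectral sequence of $\Omega S^n \to \Lambda S^n \to S^n$, which collapses at $E_2$ for degree reasons; this gives rank-at-most-one groups in every degree, with distinguished additive generators in the degrees of the form $j(n-1)$ and $j(n-1)+n$. In particular degrees $3n-2$ in homology and $n-1$ in cohomology each have rank exactly $1$, and I would define $\Theta$ and $\omega$ to be generators of these groups.

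The next step is to verify that $\Theta^{\bullet m}$ generates $H_{(2m+1)(n-1)+1}(\Lambda;G)=G$ and $\omega^{\circledast m}$ generates $H^{(2m-1)(n-1)}(\Lambda,\Lambda^0;G)=G$. A degree check is automatic: $\bullet$ shifts degree by $-n$ and $\circledast$ by $+n-1$, so the powers land in the asserted target groups, which are the rank-one loci. The content beyond degree bookkeeping---that these powers are actually nonzero---is the nonnilpotence assertion of \cite{GH}, which identifies $\Theta$ (resp.\ $\omega$) with a polynomial generator of the relevant subalgebra; concretely one recognizes $\Theta$ as the product of the fundamental class of $S^n \subset \Lambda S^n$ with a length-two-iteration class, and $\omega$ as the generator of $H^{n-1}(\Lambda,\Lambda^0;G)$ whose restriction to a single fiber $\Omega S^n$ is the canonical class.

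The duality formula~\ref{eq:9999} is the heart of the lemma. I would establish it by first verifying the base case $\omega(\Theta)=1$ after rescaling $\omega$ if necessary (possible because Lemma~\ref{lem:A} makes the relevant Kronecker pairing a nondegenerate pairing of rank-one groups), and then inducting on $m$ using the fundamental compatibility of $\bullet$ and $\circledast$ with the Kronecker pairing established in \cite{GH}: a natural identity of the form $(\alpha \circledast \beta)(\gamma \bullet \delta) = \pm\,\alpha(\gamma)\,\beta(\delta)$ whenever degrees match, which is proven by comparing the geometric definitions via concatenation of chains and intersection/Thom pushforward. Applied to $\beta = \omega^{\circledast(m-1)}$ and $\delta = \Theta^{\bullet(m-1)}$, this propagates the normalization and yields~\ref{eq:9999}. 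The finite sets $Q$ and $q$ are then harvested from the additive generators in degrees $\leq 3n-2$: since $\bullet \Theta^{\bullet m}$ and $\circledast \omega^{\circledast m}$ each send a rank-one group isomorphically to another rank-one group in degree shifted by $2(n-1)$, every homology class of degree $>n$ can be reduced uniquely modulo these multiplications to a representative in $Q$, and symmetrically for cohomology. Over a field the resulting bases $Q$ and $q$ are related by the Kronecker pairing, so duality of $Q$ and $q$ follows from the nondegeneracy in Lemma~\ref{lem:A}.

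The main obstacle is verifying the compatibility identity $(\alpha \circledast \beta)(\gamma \bullet \delta) = \pm\,\alpha(\gamma)\,\beta(\delta)$ with the correct signs and normalizations, because $\bullet$ and $\circledast$ are defined on different spaces (absolute versus relative to constants) using different Thom/intersection constructions, and pinning down the scalar exactly rather than up to a nonzero multiple requires careful bookkeeping; the case $G=\mathbb{Z}_2$ also needs separate attention since some additive groups are $\mathbb{Z}_2$ rather than $\mathbb{Z}$, which constrains which generators one can take. All of these ingredients are essentially present in \cite{GH}, so in practice I would begin by matching notation with the sphere computation there and then extract the four numbered claims in Lemma~\ref{lem:B} as consequences.
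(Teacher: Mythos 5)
The central difficulty of Lemma~\ref{lem:B} is the duality formula~\ref{eq:9999}, and your proposed route to it does not work. You want to induct on $m$ using a compatibility of the form $(\alpha\circledast\beta)(\gamma\bullet\delta)=\pm\,\alpha(\gamma)\,\beta(\delta)$ ``whenever degrees match,'' but the degrees can never match: if $\alpha(\gamma)$ and $\beta(\delta)$ make sense then $\deg\alpha=\deg\gamma$ and $\deg\beta=\deg\delta$, whence $\deg(\alpha\circledast\beta)=\deg\alpha+\deg\beta+n-1$ while $\deg(\gamma\bullet\delta)=\deg\alpha+\deg\beta-n$, differing by $2n-1\neq 0$. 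So the proposed identity is vacuous and carries no information; it cannot propagate anything. This is not a matter of pinning down a sign or scalar, as you anticipate in the last paragraph: there is no such general adjoint relation between $\bullet$ and $\circledast$. The actual content of~\ref{eq:9999} is that the specific operators $\bullet\,\Theta$ (degree shift $+(3n-2)-n=2(n-1)$) and $\circledast\,\omega$ (degree shift $+(n-1)+(n-1)=2(n-1)$) are Kronecker-adjoint to each other. That these particular degree shifts agree is an arithmetic coincidence for spheres, and the adjointness itself is a substantial geometric theorem of Goresky--Hingston, obtained from a level-wise comparison of the two products on the Morse complex for the round metric, not from a formal compatibility.

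Aside from this, the remaining steps are essentially right and match the spirit of the paper: the additive statements and the rank-one dichotomy come from Lemma~\ref{lem:A} (equivalently the collapsing spectral sequence of $\Omega S^n\to\Lambda S^n\to S^n$); $\Theta$ is taken to be $U^{\bullet 2}$ for $n$ odd or mod~$2$, and the polynomial generator of the even-$\mathbb{Z}$ ring otherwise; nonnilpotence of $\Theta$ and $\omega$ is read off from the ring presentations $\wedge(A)\otimes G[U]$ and $\wedge(W)\otimes\mathbb{Z}[A,\Theta]/(A^2,AW,2A\Theta)$; and once~\ref{eq:9999} is granted, the finite sets $Q$ and $q$ drop out of the periodicity in degree $2(n-1)$ exactly as you describe, with the field-case pairing statement following from Lemma~\ref{lem:A} and the Duality Lemma~\ref{lem:duality}. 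But the paper does not reprove~\ref{eq:9999}: Lemmas~\ref{lem:A} and~\ref{lem:B} are cited to~\cite{GH}, and the spectral-sequence diagrams in Section~\ref{sec:proof_Lemma_C} serve only to make the statements plausible. If you intend to give a self-contained proof of~\ref{eq:9999}, you will need to reproduce the duality argument of~\cite{GH}; the route you sketch does not reach it.
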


Note that $\bullet \Theta $ and $\circledast \omega $ both increase degree
by $2(n-1)$. Let \ $W$ be the generator of $H_{n-1}(\Lambda ;G)$, so that $%
\omega (W)=1$ then $W\bullet \Theta ^{\bullet m}$ is a generator of $%
H_{(2m+1)(n-1)}(\Lambda ;G)=G$, and 
\begin{equation}
\omega ^{m+1}(W\bullet \Theta ^{\bullet m})=1\,.  \label{eq:QXR}
\end{equation}%
Similarly, if $\theta $ is the generator of $H^{3n-2}(\Lambda ,\Lambda ^{0})$%
, so that $\theta (\Theta )=1$ then 
\begin{equation}
\theta \circledast \omega ^{m-1}(\Theta ^{\bullet m})=1\text{\thinspace }.
\label{eq:99}
\end{equation}

Fix an integral domain $G$. \ Let 
\begin{equation*}
\mu _{G}^{+}=\lim_{m\rightarrow \infty }\frac{\cri%
_{G}(\Theta ^{\bullet m})}{m}\,\,;\,\,\mu _{G}^{-}=\lim_{m\rightarrow \infty
}\frac{\cri_{G}(\omega ^{\circledast m})}{m}
\end{equation*}%
(These limits exist by the \emph{Power Lemma~\ref{lem:powers}}.) We will
first establish that 
\begin{equation}
\mu ^{-}=:\mu _{\mathbb{Z}}^{-}\leq \mu _{G}^{-}\leq \mu _{G}^{+}\leq \mu _{%
\mathbb{Z}}^{+}\text{ }=:\mu ^{+}\,.  \label{eq:33}
\end{equation}
The first and third inequalities follow from the \emph{Lemma~\ref{lem:effect}
on effect of Coefficients}, and Lemma~\ref{lem:B}. From now on in this
section 
\begin{equation*}
\cri=:\cri_{G}\text{.}
\end{equation*}%
We have \ 
\begin{align*}
\lim_{m\rightarrow \infty }\frac{\cri(\Theta ^{\bullet m})%
}{m}& =:\mu _{\Theta }(\Theta )\text{ \ \ \ \ \ \ (by definition, Equation~%
\ref{eq:AB})} \\
& \geq \overline{\mu _{\Theta }}(W)\text{ \ \ \ \ \ \ \ ( Equation~\ref%
{eq:AAA})} \\
& =\limsup_{m\rightarrow \infty }\frac{\cri(W\bullet
\Theta ^{\bullet m})}{m}\text{ \ \ \ \ (by definition)} \\
& \geq \limsup_{m\rightarrow \infty }\frac{\cri(\omega
^{\circledast m+1})}{m}\text{ \ \ \ \ \ \ \ (Equation~\ref{eq:YYY} and 
Equation~\ref{eq:QXR})} \\
& =\limsup_{m\rightarrow \infty }\frac{\cri(\omega
^{\circledast m})}{m}\text{ \ \ \ \ \ \ \ \ \ \ \ (Equation~\ref{eq:EF})} \\
& =\lim_{m\rightarrow \infty }\frac{\cri(\omega
^{\circledast m})}{m}\text{ \ \ \ \ \ (Power Lemma~\ref{lem:powers})}
\end{align*}%
which proves Equation~\ref{eq:33}. Let $Q,q$ be as in Lemma \ref{lem:B}. \
For every $Y\in Q$ and $y\in q$, by the basic inequalities~\ref{eq:XY}, 
\begin{align}
\cri(\omega ^{\circledast m})& 
\leq \cri
(y\circledast \omega ^{\circledast m})  \label{eq:87} \\
\cri(Y\bullet \Theta ^{\bullet m})& 
\leq \cri(Y)+\cri(\Theta ^{\bullet m})  \label{eq:88}
\end{align}%
We will show that the Interval Theorem~\ref{thm:interval} follows,
considering seperately the cases when $G$ is a field and $G=\mathbb{Z}$.  

If $G$ is a field, given $\ X\in H_{j}(\Lambda ;G)$ nontrivial with $j>n$,
write $X=Y\bullet \Theta ^{\bullet m}$\ for $Y\in Q$; suppose $y(Y)=1$. By
the Duality Lemma~\ref{lem:duality}, and Equation~\ref{eq:9999}, 
\begin{equation*}
\cri_{G}(y\circledast \omega ^{\circledast m})
=\cri (X)=\cri_{G}(Y\bullet \Theta ^{\bullet m})
\end{equation*}%
and thus with Equation~\ref{eq:87} and Equation~\ref{eq:88}, we have 
\begin{equation}
\cri(\omega ^{\circledast m})\leq \cri%
(X)\leq \cri(Y)+\cri(\Theta ^{\bullet
m})\,.  \label{eq:QQ}
\end{equation}%
Since 
\begin{equation*}
\lim_{m\rightarrow \infty }\frac{\deg (y\circledast \omega ^{\circledast m})%
}{m}=\lim_{m\rightarrow \infty }\frac{\deg (Y\bullet \Theta ^{\bullet m})}{m}%
=2(n-1)
\end{equation*}%
for all $y$, $Y$, the Interval Theorem~\ref{thm:interval} is a consequence.

If $G=\mathbb{Z},$ the Interval Theorem for homology/cohomology in
dimensions in which the homology and cohomology are $\mathbb{Z}$ follows
from Lemma~\ref{lem:A}, Lemma~\ref{lem:B}, Equation~\ref{eq:87}, \ and
Equation~\ref{eq:88} by the Lemma~\ref{lem:effect} on effect of
Coefficients. Note the theorem applies since $\Theta ^{\bullet m}$ and $%
\omega ^{\circledast m}$ are generators in their dimensions. If $X$ $\in
H_{k}(\Lambda ;\mathbb{Z})=\mathbb{Z}_{2},$ note that the reduction $%
\overline{X}\in H_{k}(\Lambda ;\mathbb{Z}_{2})$ of $X$ mod $2$ satisfies 
$\cri_{\mathbb{Z}_{2}}(\overline{X})\leq \cri_{\mathbb{Z}} X$. 
We leave the remaining details of the case $G=\mathbb{Z}
$ to the reader. 

For the proof of the resonance theorem we will need also the following: \
Let 
\begin{equation*}
\Delta :H_{j}(\Lambda ;G)\rightarrow H_{j+1}(\Lambda ;G)
\end{equation*}
be the map induced by the $S^{1}-$action and the K\"{u}nneth map (\cite[p.16]%
{CS}, \cite[p.58]{GH}). \ If a homology class $X$ is supported on a closed
set $A\subset \Lambda $, then $\Delta X$ is supported on $S^{1}\cdot A$;
thus 
\begin{equation}
\mathrm{cr}(\Delta X)\leq \mathrm{cr}(X) \label{eq:Delta}
\end{equation}

\begin{lemma}
\label{lem:C} If $n$ is odd, 
\begin{equation*}
\Delta(W\bullet\Theta^{\bullet m})=(2m+1)\,\Theta^{m}\,.
\end{equation*}
If $n$ is even, then there is an integer $k=k(n)$ so that 
\begin{equation*}
\Delta(W\bullet\Theta^{\bullet m})=(mk+1)\,\Theta^{m}\,.
\end{equation*}
\end{lemma}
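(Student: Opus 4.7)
The plan is to exploit the BV algebra structure on $(H_*(\Lambda S^n; G), \bullet, \Delta)$ provided by Chas--Sullivan. The BV identity
\[
\Delta(X \bullet Y) = (\Delta X)\bullet Y + (-1)^{|X|} X \bullet (\Delta Y) + (-1)^{|X|}\{X,Y\},
\]
where the loop bracket $\{-,-\}$ is a graded biderivation of $\bullet$, reduces the computation of $\Delta(W \bullet \Theta^{\bullet m})$ to knowing $\Delta W$, $\Delta \Theta$, $\{W,\Theta\}$, and $\{\Theta,\Theta\}$, via the graded Leibniz rule and Jacobi identity.

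First I would pin down $\Delta W$ and $\Delta \Theta$, each of which lies in a one-dimensional group by Lemma~\ref{lem:A} and Lemma~\ref{lem:B}. The class $\Delta W \in H_n(\Lambda;G)$ turns out to be the unit $[S^n]$ for $\bullet$, as can be seen either by a direct cycle computation (representing $W$ as the $S^1$-sweep of a point through an $(n-1)$-cell of the base $S^n$) or by pairing against the dual generator in cohomology. The class $\Delta \Theta$ is a multiple of $W \bullet \Theta$; for $n$ odd one verifies the multiple is $2$ using the description of $\Theta$ in \cite{GH} as a cycle supported on a neighborhood of the first critical orbit of $F$. Similarly $\{W,\Theta\}$ is an integer multiple of $\Theta$, and $\{\Theta,\Theta\}$ is forced by grading parity (for $n$ odd) to land in a trivial group.

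With these inputs the inductive computation proceeds. The key auxiliary vanishing is $W \bullet W = 0$, which holds because $\deg(W \bullet W) = n-2$ lies in a trivial homology group for $n > 2$; this collapses the quadratic and higher contributions in $m$ coming from iterated brackets and from $W \bullet \Delta\Theta^{\bullet m}$. What survives is an affine-in-$m$ coefficient of $\Theta^{\bullet m}$: a constant term $+1$ from $\Delta W \bullet \Theta^{\bullet m} = [S^n] \bullet \Theta^{\bullet m}$, plus a slope contributed by $\{W,\Theta\}$ and by $\Delta\Theta$ (after pushing $W$ across). For $n$ odd the sign $(-1)^{|W|}=+1$ combines the two slope contributions and yields $c(m) = 2m+1$; for $n$ even the opposite sign $(-1)^{|W|}=-1$ and the a priori nonzero $\{\Theta,\Theta\}$ term yield $c(m) = mk+1$ with $k = k(n)$ a fixed integer.

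The main obstacle is pinning down the universal constants in the base cases (the ``$2$'' in $\Delta\Theta = 2\,W\bullet\Theta$ for $n$ odd, and the analogous integer appearing in $\{W,\Theta\}$). My preferred route is to use the Morse-theoretic model of $\Lambda S^n$ from \cite{GH} together with an explicit cycle-level description of the $S^1$-sweep on a neighborhood of a short geodesic orbit, which makes the doubling phenomenon underlying ``$2m+1$'' geometrically transparent. An alternative check uses the duality $\omega^{m+1}(W \bullet \Theta^{\bullet m}) = 1$ from Equation~\ref{eq:QXR} to convert each coefficient into a cohomological Kronecker pairing, computable directly from the known structure of $H^*(\Lambda S^n)$.
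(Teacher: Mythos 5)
Your overall strategy matches the paper's: both use the BV identity
\[
\Delta(X\bullet Y)=\Delta X\bullet Y+(-1)^{|X|}X\bullet\Delta Y+(-1)^{|X|}\{X,Y\}
\]
together with the Leibniz rule for $\{-,-\}$, dimension-counting to kill unwanted terms, and induction on $m$. However, one of your base-case identifications is incorrect and it is exactly the one you use to produce the slope: you assert $\Delta\Theta=2\,W\bullet\Theta$ for $n$ odd, but the degrees do not even match ($\deg\Delta\Theta=3n-1$ whereas $\deg(W\bullet\Theta)=3n-3$). The correct statement is that $\Delta(\Theta^{\bullet m})=0$ for all $m\ge 1$ (and $\Delta(U^{\bullet m})=0$), which for $n>3$ is a dimension argument and for $n=3$ follows from an auxiliary critical-level argument in the round metric, as the paper spells out. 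With $\Delta\Theta=0$, the term $W\bullet\Delta\Theta^{\bullet m}$ contributes nothing, so your account of where the linear-in-$m$ coefficient comes from collapses.

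The paper gets the factor $2$ for $n$ odd without any cycle-level computation, by changing the generators: for $n$ odd one has $W=A\bullet U$ and $\Theta=U^{\bullet 2}$, so that $W\bullet\Theta^{\bullet m}=A\bullet U^{\bullet(2m+1)}$. From $\Delta A=\Delta U=0$ and $\Delta(A\bullet U)=E$ one reads off $\{A,U\}=(-1)^{n}E=-E$, hence by the Leibniz rule $\{A,U^{\bullet j}\}=-jU^{\bullet(j-1)}$, and $\Delta(A\bullet U^{\bullet j})=jU^{\bullet(j-1)}$. Taking $j=2m+1$ gives $(2m+1)\Theta^{\bullet m}$; the ``$2$'' is just the exponent jump from $U$ to $\Theta=U^{\bullet 2}$, not a nontrivial coefficient to be computed geometrically. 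For $n$ even there is no such factorization, so the paper simply uses the one-dimensionality of the relevant group to write $\{W,\Theta\}=-k\Theta$ for some integer $k=k(n)$, and propagates this by induction. If you rewrite your argument to replace the erroneous $\Delta\Theta$ claim by $\Delta\Theta=0$ and instead identify $\{W,\Theta\}=-2\Theta$ for $n$ odd (which one can indeed verify via $W=A\bullet U$ and the Leibniz rule), the proof closes; the Morse-theoretic cycle model you sketch as a backup would also work but is heavier than necessary.

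A smaller caution: in the paper's convention $|X|=\deg X-n$, so $|W|=-1$ for every $n$ and $(-1)^{|W|}=-1$ always; your statement that $(-1)^{|W|}$ flips sign with the parity of $n$ uses a different grading convention, which will introduce sign errors if applied directly to the displayed BV identity.
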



\begin{proof}
(of the \emph{Resonance Theorem}~\ref{thm:resonance}): Now let $G$ be a
field. Let 
\begin{equation*}
\overline{\mu }=:\mu _{G}=\lim_{m\to\infty}
 \frac{\cri_{G}(\omega
^{\circledast m})}{m}
\end{equation*}%
By Equation~\ref{eq:XY}, $\cri(\omega ^{\circledast
m})$ is increasing. By Equation~\ref{eq:XY}, Equation~\ref{eq:99} and 
the duality Lemma~\ref{lem:duality}
it follows that $\cri(\Theta
^{\bullet m})$ is also increasing. Suppose $G$ has characteristic $p$. The
coefficient $2m-1$ or $\ (m-1)k+1$ of $\Theta ^{m-1}$ in Lemma~\ref{lem:C}
cannot be congruent to $0$ mod $p$ for two consecutive values of $m$. \
Using Equation~\ref{eq:Delta}, for each $m$ at least one of the
following is true: 
\begin{equation*}
\cri\left(\Theta ^{m-1}\right)\leq \cri(\omega
^{\circledast m})\,\,;\,\,\cri\left(\Theta ^{m}\right)
\leq\cri \left(\omega ^{\circledast (m+1)}\right),
\end{equation*}
and thus for each $m$ 
\begin{equation}
\cri\left(\Theta ^{m-1}\right)\leq \cri\left(\omega
^{\circledast (m+1)}\right).  \label{eq:PP}
\end{equation}%
Let $\ X\in H_{j}(\Lambda ;G)$ be nontrivial with $j>n$; say $%
X=Y\bullet \Theta ^{\bullet m}$, with $\deg Y\leq 3n-2$ and $y(Y)\neq 0$.
By the Duality Lemma~\ref{lem:duality}, 
\begin{equation*}
\cri(X)=\cri\left(y\circledast \omega
^{\circledast m}\right).
\end{equation*}%
Using Equation~\ref{eq:PP} and Equation~\ref{eq:QQ} together with the Powers
Lemma~\ref{lem:powers} we have 
\begin{equation}
(m-2)\,\overline{\mu }\leq \cri(X)
\leq (m+2)\,\overline{\mu }+\cri(Y)\,.  \label{eq:RR}
\end{equation}
Moreover 
\begin{equation*}
\deg X=2m(n-1)+\deg Y
\end{equation*}
so 
\begin{equation}
2m(n-1)\leq \deg X\leq 2m(n-1)+3n-2 \,. \label{eq:SS}
\end{equation}%
Putting together Equation~\ref{eq:RR} and Equation~\ref{eq:SS} we have 
\begin{align*}
\frac{\overline{\mu }}{2(n-1)}\deg X\,-\,& \frac{\overline{\mu }\,(7n-6)}{2(n-1)}%
\leq\cri (X) \\
& \leq \frac{\overline{\mu }}{2(n-1)}\deg X+(\cri (Y)+2\,
\overline{\mu })+\frac{(3n-2)[\cri(Y)+2\,\overline{\mu }]}{
\deg X+2-3n}.
\end{align*}%
This is of the form 
\begin{equation*}
\frac{1}{\overline{\alpha }}\deg X-A\leq \cri(X)\leq \frac{%
1}{\overline{\alpha }}\deg X+B+\frac{C}{\deg X-D}
\end{equation*}%
with positive constants $A,B,C,D$, and 
\begin{equation*}
\overline{\alpha }=\frac{2(n-1)}{\overline{\mu }}.
\end{equation*}%
The resonance theorem follows.
\end{proof}
The following is a consequence of Theorem~\ref{thm:interval} (interval
theorem):
\begin{corollary}
\label{cor:muplus} Fix a metric $g$ on $S^{n}$ with $n>2.$ For any field $G,$
the global mean frequency $\overline{\alpha}_{g,G}$ satisfies: 
\begin{equation*}
\frac{1}{\overline{\alpha}_{g,G}} \in\left[\frac{\mu^{-}}{2(n-1)},\frac{%
\mu^{+}}{2(n-1)}\right].
\end{equation*}
\end{corollary}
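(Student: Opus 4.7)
The plan is to deduce the corollary by combining the Resonance Theorem~\ref{thm:resonance5} with the Interval Theorem~\ref{thm:interval}. The Resonance Theorem forces the relative error $|\cri(X)/\deg X - 1/\overline{\alpha}_{g,G}|$ to vanish along any sequence of nontrivial classes with $\deg X_k \to \infty$, so the resulting interval of limit points collapses to a point, which the Interval Theorem confines to $[\mu^-/(2(n-1)), \mu^+/(2(n-1))]$.

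First I would fix a field $G$ and exhibit a concrete sequence of nontrivial homology classes in $H_*(\Lambda;G)$ whose degrees tend to infinity. The natural choice is $X_k = \Theta^{\bullet k}$, which by Lemma~\ref{lem:B} generates $H_{(2k+1)(n-1)+1}(\Lambda;G) \cong G$ and therefore is nontrivial; moreover $\deg X_k = (2k+1)(n-1)+1 \to \infty$. (Alternatively one could use $\omega^{\circledast k} \in H^{(2k-1)(n-1)}(\Lambda,\Lambda^0;G)$, since the Interval Theorem covers both homology and cohomology.)

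Next I would invoke the Resonance Theorem~\ref{thm:resonance5}: there is a constant $h = h(g,G)$ such that $|\deg(X) - \overline{\alpha}_{g,G}\,\cri_G(X)| \leq h$ for every nontrivial class $X$. Applied to the sequence $\{X_k\}$, and dividing through by $\overline{\alpha}_{g,G}\deg X_k > 0$, this yields
\begin{equation*}
\left|\frac{\cri_G(X_k)}{\deg X_k} - \frac{1}{\overline{\alpha}_{g,G}}\right| \leq \frac{h}{\overline{\alpha}_{g,G}\,\deg X_k} \longrightarrow 0.
\end{equation*}
Consequently
\begin{equation*}
\liminf_{k\to\infty}\frac{\cri_G(X_k)}{\deg X_k} = \limsup_{k\to\infty}\frac{\cri_G(X_k)}{\deg X_k} = \frac{1}{\overline{\alpha}_{g,G}},
\end{equation*}
so the interval of limit points degenerates to the single point $\{1/\overline{\alpha}_{g,G}\}$.

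Finally, the Interval Theorem~\ref{thm:interval}, applied to this very sequence $\{X_k\}$ with coefficients in $G$, asserts that this interval is contained in $[\mu^-/(2(n-1)), \mu^+/(2(n-1))]$, which is exactly the claim. There is no substantive obstacle: the argument is a direct compatibility check between the two main theorems, with the only content being the observation that the Resonance Theorem's uniform bound squeezes the limit of $\cri_G(X_k)/\deg X_k$ to $1/\overline{\alpha}_{g,G}$, while the Interval Theorem places this same limit inside the claimed window.
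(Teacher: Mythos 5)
Your proof is correct and matches the paper's intent: the paper offers no separate argument for Corollary~\ref{cor:muplus} beyond the remark that it is ``a consequence of Theorem~\ref{thm:interval}'', and what you have written is precisely the squeeze one needs to make that remark explicit, namely that the Resonance Theorem~\ref{thm:resonance5} forces $\cri_G(X_k)/\deg X_k \to 1/\overline{\alpha}_{g,G}$ for any sequence with $\deg X_k \to \infty$, and the Interval Theorem then confines that limit to $[\mu^-/(2(n-1)),\,\mu^+/(2(n-1))]$. The only cosmetic point worth noting is that Theorem~\ref{thm:interval} is phrased for classes in $H_*(\Lambda,\Lambda^0;G)$ or $H^*(\Lambda,\Lambda^0;G)$, while you take $X_k=\Theta^{\bullet k}\in H_*(\Lambda;G)$; since $\deg\Theta^{\bullet k}>n$ for all $k\ge 1$ and $H_j(\Lambda^0;G)=0$ for $j>n$, the long exact sequence of the pair identifies $H_j(\Lambda;G)\cong H_j(\Lambda,\Lambda^0;G)$ in this range, so the sequence does fall within the theorem's scope. (An alternative one-line route, implicit in the proof of the Resonance Theorem itself, is to observe that $1/\overline{\alpha}_{g,G}=\mu_G^-/(2(n-1))$ by definition of $\overline{\alpha}$ there, and then quote Equation~\ref{eq:33}, $\mu^-\le\mu_G^-\le\mu^+$; your argument and this one are two phrasings of the same compatibility check.)
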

It is not known whether there is a metric on $S^{n}$ with $\mu ^{-}\neq
\mu ^{+}$.\ \ 

\section{Proof of Lemma~\protect\ref{lem:C}; computation of the mod 2
Chas-Sullivan product}

\label{sec:proof_Lemma_C}

In this section we present the proof of Lemma~\ref{lem:C} and sketch the
proofs of Lemma~\ref{lem:A} and Lemma~\ref{lem:B}.

We first sum up some facts about the loop products for spheres. The
Chas-Sullivan product over the integers was computed by Cohen, Jones, and
Yan in~\cite{CJY}. We include an argument below that also works over the
field $\mathbb{Z}_{2}.$

The identity element of the homology ring is a generator $E$ of $%
H_{n}(S^{n})\subset H_{n}(\Lambda S^{n}),$ represented by the space $%
\Lambda^{0}$ of trivial loops.

For $n>1$ odd, 
\begin{equation}\label{eq:AXA1}
\left(H_{\ast}\left(\Lambda S^{n};\mathbb{Z}\right),\bullet\right)=\wedge(A)%
\otimes\mathbb{Z}\lbrack U\rbrack
\end{equation}
where $A\in H_{0}(\Lambda S^{n})$, and $U\in H_{2n-1}(\Lambda S^{n})$.

The case $n>2$ even with $\mathbb{Z}_{2}$ coefficients is formally the same: 
\begin{equation}  \label{eq:AXA}
\left(H_{\ast}\left(\Lambda S^{n};\mathbb{Z}_{2}\right),\bullet\right)=
\wedge(A)\otimes\mathbb{Z}_{2}[U]
\end{equation}
where $A\in H_{0}(\Lambda S^{n})$ and $U\in H_{2n-1}(\Lambda S^{n})$.

For the convenience of the reader, here is the $\mathcal{E}^{1}$ page of the
spectral sequence converging to the homology of $\Lambda S^{n}$ for $n$ odd,
with integer coefficients, or (respectively) for $n$ even with, mod 2
coefficients, that come from the filtration determined by the function $F$
when $M=S^{n}$ carries the round metric where the prime closed geodesics
have length $L$. \ An element in a nonempty box is a generator of homology.
\ The number at the left side of each column is the degreee $d$.

The column of a class is determined by the filtration level: For example the
term $A\bullet U^{\bullet6}$ in the right-most column represents a generator
of $H_{6n-6}(\Lambda^{\leq3L},\Lambda^{\leq2L})$ (with $\mathbb{Z}$ or,
resp., $\mathbb{Z}_{2}$ coefficients).

The column containing $A$ and $E$ is $H_{\ast}(\Lambda^{0})$. The homology
is $\mathbb{Z}$ (resp. $\mathbb{Z}_{2})$ in every spot where there is a
generator, and is $0$ otherwise. For the round metric (any $n$, any
coefficents), $F$ is a perfect Morse function, which means the spectral
sequence degenerates at the $\mathcal{E}^{1}$ page. The rank of $%
H_{j}(\Lambda S^{n})$ is thus the number of generators in the $j^{th}$ row,
and the column in which a generator appears tells the critical level of that
homology class in the round metric.

\newpage

\textsc{$H_{\ast}(\Lambda S^{n})$ ; $n$ odd or $\mathbb{Z}_{2}$ coefficients:%
} 
\begin{equation*}
\begin{array}{c|cccc}
\uparrow d &  &  &  &  \\ 
6n-5 &  &  &  & \text{ \ \ }U^{\bullet5} \\ 
&  &  &  &  \\ 
6n-6 &  &  &  & \text{ \ \ }A\bullet U^{\bullet6} \\ 
&  &  &  &  \\ 
5n-4 &  &  & \text{ \ \ }U^{\bullet4} &  \\ 
&  &  &  &  \\ 
5n-5 &  &  &  & \text{ \ \ }A\bullet U^{\bullet5} \\ 
&  &  &  &  \\ 
4n-3 &  &  & \text{ \ \ }U^{\bullet3} &  \\ 
&  &  &  &  \\ 
4n-4 &  &  & \text{ \ \ }A\bullet U^{\bullet4} &  \\ 
&  &  &  &  \\ 
3n-2 &  & \text{ \ \ }U^{\bullet2} &  &  \\ 
&  &  &  &  \\ 
3n-3 &  &  & \text{ \ \ }A\bullet U^{\bullet3} &  \\ 
&  &  &  &  \\ 
2n-1 &  & \text{ \ \ }U &  &  \\ 
&  &  &  &  \\ 
2n-2 &  & \text{ \ \ }A\bullet U^{\bullet2} &  &  \\ 
&  &  &  &  \\ 
n & \text{ \ \ \ \ \ }E\text{ \ \ \ \ \ } &  &  &  \\ 
&  &  &  &  \\ 
n-1 &  & \text{ \ \ }A\bullet U &  &  \\ 
&  &  &  &  \\ 
0 & \text{ \ \ \ \ \ \ }A\text{ \ \ \ \ \ \ } &  &  &  \\ 
&  &  &  &  \\ \hline
&  &  &  &  \\ 
& 0 & L & 2L & \text{ \ \ \ \ \ }3L\text{ }\rightarrow\ell%
\end{array}%
\end{equation*}
The relationship between this diagram and the notation of 
Section~\ref{sec:proof} is (when $n$ is odd, or $\ \textrm{\rm mod }2$)(see~\cite{GH}): 
\begin{equation*}
\Theta =U^{\bullet2}\,\,;\,\, W =A\bullet U.
\end{equation*}
The first equation may be taken for our purposes as a definition. The second
equation follows from the fact that both homology classes are generators in
their degree, and thus unique up to a sign. \ The sign can be computed
geometrically using the definitions; in any case the proof of the theorem is
not sensitive to a change in the sign of $W$.

For $n>2$ even with integer coefficients, 
\begin{equation*}
H_{\ast}(\Lambda S^{n};\bullet)= \wedge(W)\otimes\mathbb{Z}\lbrack A,
\Theta]/A^{2},AW,2A\Theta
\end{equation*}
where $W\in H_{n-1}(\Lambda S^{n}),$ $\Theta\in H_{3n-2}(\Lambda S^{n})$ as
in Section~\ref{sec:proof} and $A\in H_{0}(\Lambda S^{n})\,.$ Here is the
corresponding diagram of the spectral sequence converging to the homology of
the free loop space of an even sphere, with integer coefficients. The
classes marked $(\ast)$ are 2-torsion. \ The homology in dimensions with
generator $A$, $E$ or $W\Theta^{m}$ , or $\Theta^{m}$, $\omega^{m}$ or $%
d\omega^{m}$ is $\mathbb{Z}.$

\medskip

\textsc{$H_{\ast}(\Lambda S^{n})$ ; $n$ even and $\mathbb{Z}$ coefficients:} 
\begin{equation*}
\begin{array}{c|cccc}
\uparrow d &  &  &  &  \\ 
6n-5 &  &  &  & \text{ \ \ }0 \\ 
&  &  &  &  \\ 
6n-6 &  &  &  & \text{ \ \ }A\bullet\Theta^{\bullet3}(\ast) \\ 
&  &  &  &  \\ 
5n-4 &  &  & \text{ \ \ }\Theta^{\bullet2} &  \\ 
&  &  &  &  \\ 
5n-5 &  &  &  & \text{ \ \ }W\bullet\Theta^{\bullet2} \\ 
&  &  &  &  \\ 
4n-3 &  &  & \text{ \ \ }0 &  \\ 
&  &  &  &  \\ 
4n-4 &  &  & \text{ \ \ }A\bullet\Theta^{\bullet2}(\ast) &  \\ 
&  &  &  &  \\ 
3n-2 &  & \text{ \ \ }\Theta &  &  \\ 
&  &  &  &  \\ 
3n-3 &  &  & \text{ \ \ }W\bullet\Theta &  \\ 
&  &  &  &  \\ 
2n-1 &  & 0 &  &  \\ 
&  &  &  &  \\ 
2n-2 &  & \text{ \ \ }A\bullet\Theta(\ast) &  &  \\ 
&  &  &  &  \\ 
n & \text{ \ \ \ \ \ }E\text{ \ \ \ \ \ } &  &  &  \\ 
&  &  &  &  \\ 
n-1 &  & W &  &  \\ 
&  &  &  &  \\ 
0 & \text{ \ \ \ \ \ \ }A\text{ \ \ \ \ \ \ } &  &  &  \\ 
&  &  &  &  \\ \hline
&  &  &  &  \\ 
& 0 & L & 2L & \text{ \ \ \ \ \ }3L\text{ }\rightarrow\ell%
\end{array}%
\end{equation*}
\ 

\begin{remark}
\rm
The truth of Lemma~\ref{lem:A} and Lemma~\ref{lem:B} is evident from
the above, and the (similar) $\mathcal{E}_{1}$ page for the cohomology
spectral sequence; see~\cite{GH}. 
\end{remark}

Lemma~\ref{lem:C} will clearly follow from the following 

\begin{lemma}
\label{lem:Cast} If $n$\textit{\ is odd, \ or with }$\textrm{\rm mod }2$\textit{\
coefficients, \ } 
\begin{equation}  \label{eq:JKL}
\Delta(A\bullet U^{\bullet m})=mU^{\bullet m-1}
\end{equation}
If $n$ is even, there is an integer $k=k(n)$ so that 
\begin{equation}  \label{eq:MNO}
\Delta(W\bullet\Theta^{\bullet m})=(mk+1)\Theta^{\bullet m}
\end{equation}
\end{lemma}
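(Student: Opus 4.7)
The plan is as follows. By Lemma~\ref{lem:A} the target homology groups $H_{m(n-1)+1}(\Lambda S^n;G)$ (odd case or mod $2$) and $H_{2m(n-1)+1}(\Lambda S^n;\mathbb{Z})$ (even case) have rank at most one, and by Lemma~\ref{lem:B} they are generated respectively by $U^{\bullet(m-1)}$ and $\Theta^{\bullet m}$. Consequently $\Delta(A\bullet U^{\bullet m})=c_m\,U^{\bullet(m-1)}$ and $\Delta(W\bullet\Theta^{\bullet m})=d_m\,\Theta^{\bullet m}$ for some scalars, so the task reduces to identifying $c_m=m$ and $d_m=mk+1$.

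To compute these coefficients I would exploit the BV-algebra structure on Chas--Sullivan homology, namely the identity
\[
\Delta(x\bullet y)=\Delta(x)\bullet y+(-1)^{|x|}x\bullet\Delta(y)+(-1)^{|x|}\{x,y\},
\]
where $\{\cdot,\cdot\}$ denotes the induced loop bracket. The crucial input is that $A$ is represented by a constant loop, which is fixed pointwise by the $S^1$-action; hence $\Delta A=0$, and similarly $\Delta E=0$. The base case $\Delta(A\bullet U)=E$ (and its even-$n$ analogue for $m=1$) then follows from the rank-one structure together with a nontriviality check: pair $\Delta(A\bullet U)$ with the cohomology class dual to $E$ via Equation~\ref{eq:9999}, using that $\Delta$ preserves support (Equation~\ref{eq:Delta}) so the outcome cannot be hidden behind a lower critical level. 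For the inductive step I would take $x=A\bullet U^{\bullet(m-1)}$ and $y=U$ (resp. $x=W\bullet\Theta^{\bullet(m-1)}$ and $y=\Theta$) in the BV identity. The terms $\Delta(x)\bullet y$ and $x\bullet\Delta(y)$ are controlled by the inductive hypothesis and by a direct dimension count using the additive structure displayed in the spectral sequence diagrams above, which typically forces $\Delta U=0$ (resp.~$\Delta\Theta=0$) in the relevant degree. Since the Gerstenhaber bracket is a graded biderivation, the term $\{A\bullet U^{\bullet(m-1)},U\}$ reduces by the Leibniz rule to a multiple of $\{A,U\}\bullet U^{\bullet(m-1)}$, and the full identity becomes a first-order linear recurrence for $c_m$ (resp.~$d_m$) whose solution is $c_m=m$ and $d_m=mk+1$.

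The main obstacle is pinning down the bracket terms $\{A,U\}$ and $\{W,\Theta\}$ concretely enough to determine the slope of the recurrence. For the odd case (or mod $2$) this can be done via the explicit Morse description of great circles on the round sphere, where $U$ has a canonical geometric representative, or by a direct duality argument using the Goresky--Hingston cohomology coproduct of~\cite{GH} together with Equation~\ref{eq:9999}. For the even case over $\mathbb{Z}$ the $2$-torsion on the $A$-column (visible in the spectral sequence diagram above) obstructs a fully explicit identification of $\{W,\Theta\}$; however, the existence of a single integer $k=k(n)$ yielding the stated formula is forced by the linearity of the recurrence combined with the rank-one structure of Lemma~\ref{lem:A}, which is exactly what the Resonance Theorem~\ref{thm:resonance5} requires.
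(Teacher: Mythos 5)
Your overall plan matches the paper's: reduce to the scalar coefficient via the rank-one structure from Lemma~\ref{lem:A}, establish $\Delta A=0$ and $\Delta(U^{\bullet m})=0$ (resp.\ $\Delta(\Theta^{\bullet m})=0$), and then grind out the coefficient via the BV identity~\ref{eq:number} and the biderivation property~\ref{eq:numbernumber}. But you identify the bracket computation as an unresolved ``main obstacle'' and reach for heavy machinery (explicit Morse representatives, cohomology coproduct), whereas in fact the key input falls out for free; and your proposed nontriviality argument for the base case does not work.

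On the base case: you want to show $\Delta(A\bullet U)=E$ rather than $0$. You propose pairing with a dual cohomology class and using Equation~\ref{eq:Delta}, but $\cri(\Delta X)\leq\cri(X)$ says nothing about nonvanishing; a priori $\Delta(A\bullet U)$ could simply be zero, and rank one does not exclude that. The paper's route is to recognize $A\bullet U$ (and $W$ in the even case) as $i_*$ of the generator of $H_{n-1}(\Omega S^n)$, for which $\Delta(i_*(\text{gen}))=E$ is a standard fact. You need that geometric input; the formal support estimate cannot replace it.

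On the bracket terms: once the base case $\Delta(A\bullet U)=E$ is granted, there is no remaining obstacle to pinning down $\{A,U\}$, because $\Delta A=0$ and $\Delta U=0$ force, via the BV identity applied to $A\bullet U$, that $\{A,U\}=(-1)^{n}\Delta(A\bullet U)=(-1)^nE$. That is, the bracket is an output of the base case, not an independent input needing a geometric computation. You missed this. Likewise in the even case $\{W,\Theta\}$ sits in $H_{3n-2}(\Lambda;\mathbb{Z})=\mathbb{Z}\cdot\Theta$, so $\{W,\Theta\}=-k\Theta$ for some integer $k$ by a dimension count alone; there is no need to appeal to the Resonance Theorem (which would be circular, since Lemma~\ref{lem:C} is an ingredient of its proof). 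Finally, the paper does the induction on the bracket $\{A,U^{\bullet m}\}=-mU^{\bullet(m-1)}$ first and then applies the BV identity once to $A\bullet U^{\bullet m}$; your splitting $x=A\bullet U^{\bullet(m-1)}$, $y=U$ can be made to work, but it forces you to expand $\{A\bullet U^{\bullet(m-1)},U\}$ with the biderivation rule in the opposite slot and to track more signs, while gaining nothing. One further small slip: the target group in the even case is $H_{(2m+1)(n-1)+1}$, not $H_{2m(n-1)+1}$.
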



\begin{proof}
When $m=1$ and $m=0$, Equation~\ref{eq:JKL} and Equation~\ref{eq:MNO} are
(resp.) the statements 
\begin{equation*}
\Delta(A\bullet U) =E\,\,;\,\, \Delta(W) =E.
\end{equation*}
These are well-known facts about the classes $A\bullet U$ and $W$, which are
the images of the generator of $H_{n-1}(\Omega S^{n})$. \ Also note 
\begin{equation*}
\Delta A=0
\end{equation*}
for dimension reasons, and 
\begin{equation*}
\Delta(U^{\bullet m}) =0\,\,;\,\, \Delta(\Theta^{\bullet m}) =0\,.
\end{equation*}
for all $m\geq1$:\ \ because the homology is trivial in the degree of $%
\Delta(U^{\bullet m})$ \ (or $\Delta(\Theta^{\bullet m})$ if $n$ is even) if 
$n>3$, and, in the case $n=3$ because in the standard metric $\Delta
(U^{\bullet m})$ has a representative in level $\leq\frac{m}{2}$ if $m$ is
even , and level $\leq\frac{m+1}{2}$ if $m$ is odd. But the homology gropups
in the degree of $\Delta(U^{\bullet m})$ at that level are trivial. We will
prove Equation~\ref{eq:JKL} and Equation~\ref{eq:MNO} using induction on $m$
and Chas and Sullivan's equations~\cite[Cor.5.3, p.19]{CS} or \cite[17.1.1,
p.58]{GH}. 
\begin{align}
\{X,Y\bullet Z\} & =\{X,Y\}\bullet Z+(-1)^{|Y|(|X|+1)}Y\bullet\{X,Z\}.
\label{eq:numbernumber} \\
\Delta(X\bullet Y) & =\Delta X\bullet Y+(-1)^{|X|}X\bullet\Delta
Y+(-1)^{|X|}\{X,Y\}  \label{eq:number}
\end{align}
where $|X|=\deg X-n\,.$

The first step is to compute some brackets. Using Equation~\ref{eq:number}
we have 
\begin{equation*}
\{A,U\}=(-1)^{n}\Delta\left(A\bullet U\right)=(-1)^{n}E=-E 
\end{equation*}
if $n$ is odd or $G=\mathbb{Z}_{2}$. When $n$ is even we conclude for
dimensional reasons that 
\begin{equation*}
\{W,\Theta\}=-k\Theta
\end{equation*}
for some $k=k(n)$.

Using induction and Equation~\ref{eq:numbernumber} we have 
\begin{align*}
\{A,U^{\bullet m}\} & =-mU^{\bullet(m-1)} \\
\{W,\Theta^{\bullet m}\} & =-mk\Theta^{\bullet m}
\end{align*}
for all $m\geq1$.

The lemma follows using induction and Equation~\ref{eq:number}.
\end{proof}

\textsc{Chas-Sullivan product on $H_{\ast}(\Lambda(S^{n});\mathbb{Z}_{2})$ $%
n>2$ even}

The inclusion 
\begin{equation*}
i:\Omega M\rightarrow \Lambda M
\end{equation*}%
induces maps 
\begin{align*}
i_{\ast }& :H_{\ast }(\Omega M)\rightarrow H_{\ast }(\Lambda M) \\
i^{!}& :H_{\ast }(\Lambda M)\rightarrow H_{\ast -n}(\Omega M).
\end{align*}%
Let $\times $ denote the Pontrjagin product on $H_{\ast }(\Omega M)\,.$ We
compute the homology loop product for even spheres and $\mathbb{Z}_{2}$
coefficients, Equation~\ref{eq:AXA}, using the Pontrjagin product and the
Gysin formulas~\cite[Prop.3.4]{CS}; \cite[9.3]{GH}. 
\begin{align}
i^{!}(X\bullet Y)& =i^{!}(X)\times i^{!}(Y)  \label{eq:QXQ} \\
(i_{\ast }(Z))\bullet X& =i_{\ast }(Z\times i^{!}(X))  \label{eq:RXR} \\
i_{\ast }i^{!}X& =A\bullet X  \label{eq:PXP}
\end{align}%
where $A$ is a generator of $H_{0}(\Lambda M)\,.$ The Pontrjagin ring $%
(\Omega S^{n},\mathbb{Z}_{2};\times )$ is the polynomial ring $\mathbb{Z}%
_{2}[W]$ on a generator $W$ in dimension $n-1$. It is well known (probably
since Morse, though he was more interested in the quotient 
$\Lambda /\mathbb{O}(2)$
than in $\Lambda $; see~\cite{GH} for a recent treatment using Morse theory)
that $H_{\ast }(\Lambda (S^{n});\mathbb{Z}_{2})$ has the same rank in each
dimension as $\mathbb{Z}_{2}[W]\otimes H_{\ast }(S^{n}).$ It follows that
the spectral sequence for the fibration 
\begin{equation*}
\Omega M\rightarrow \Lambda M\rightarrow M
\end{equation*}%
collapses, and thus that $i_{\ast }$ is injective, and $i^{!}$ is
surjective. In particular there is an element $U\in H_{2n-1}(\Lambda M)$
with $i^{!}(U)=W$. It follows from Equation~\ref{eq:QXQ} that $%
i^{!}U^{\bullet m}=W^{\times m}$ and thus that $U^{\bullet m}\neq 0$ for all 
$m$. By Equation~\ref{eq:PXP}, $i_{\ast }(W)=A\bullet U$. \ It follows from
Equation~\ref{eq:RXR}, and the fact that $i_{\ast }$ is injective that 
\begin{equation*}
i_{\ast }(A\bullet U)\bullet U^{\bullet m}=i_{\ast }(W\times
i^{!}U^{m})=i_{\ast }(W^{\times m})\neq 0.
\end{equation*}%
The Chas-Sullivan product is always trivial on the image of $i_{\ast }$, so $%
A\bullet A=0$. Because the rank of 
$H_{\ast }(\Lambda (S^{n});\mathbb{Z}_{2}) $ 
is at most one in each dimension, Equation~\ref{eq:AXA} follows. 
\section{Continuity Lemma and
Density Theorem
}
\label{sec:continuity}
If $M$ is a compact manifold, the space $\mathcal{G}_{1}$ is defined in
the Open Mapping theorem~\ref{thm:open_mapping_theorem} in the
introduction. \ The length, average index, index, and nullity of closed
geodesics have the following continuity properties: 
\begin{itemize}
\item[(i)] The map $\mathcal{G}_{1}\rightarrow \mathbb{R}^{2}$
 given by $(g,\gamma)\rightarrow(l_{g}(\gamma
),\alpha_{\gamma,g})$ is continuous.
\item[(ii)] The map $\mathcal{G}_{1}\rightarrow\mathbb{R}$
given by $(g,\gamma)\rightarrow \ind (\gamma)$ is \textit{lower
semi- continuous.}
\item[(iii)]
The map \ $\mathcal{G}_{1}\rightarrow\mathbb{R}$
given by $(g,\gamma)\rightarrow \index+\nullity(\gamma)$ is
\textit{upper semi- continuous.}
\end{itemize}

In particular if $(g_{q},\gamma_{q})\rightarrow(g,\gamma)$, then
$\lim\inf(\ind(\gamma_{q}))\geq \ind(\gamma)$ and $\lim\sup
(\ind+\nullity(\gamma_{q}))\leq(\ind+\nullity)(\gamma).$  

The {\em Continuity Lemma}~\ref{lem:continuity} 
is a consequence of the {\em Resonance
Theorem}~\ref{thm:resonance} and the {\em Powers Lemma}~\ref{lem:powers}: 
Fix a field $G$. For a fixed
homology or cohomology class $X$, $\cri X$ depends continuously on the metric.
By the Powers Lemma
\[
\frac{cr(\omega^{\circledast m})}{m}\leq\frac{2(n-1)}{\overline{\alpha}_{g,G}%
}\leq\frac{cr(\Theta^{\bullet m})}{m}%
\]
Since the left and right terms have the same limit as $m\rightarrow\infty$,
the Theorem follows.

\begin{proof} (of the {\em Density Theorem}~\ref{thm:density})

Fix a metric $g$ and let $\varepsilon>0$
be given. Take a nontrivial homology class 
$X\in H_{d}(\Lambda;\mathbb{Q})$ 
and approximate $g$ by a sequence of bumpy metrics $g_{i}$. Let
$\Lambda_{i}^{a}$ be the set of closed curves $\gamma$ with $F_{i}(\gamma)\leq
a,$ where $F_{i}=\sqrt{E_i}$ in the metric $g_{i}.$ Given $\mu>0$, for $i$
sufficiently large, the $d$-dimensional class $X$ will lie in the image of
$H_{d}\left(\Lambda_{i}^{\cri(X)+\mu}\right)$, and will have nontrivial image in
$H_{d}\left(\Lambda_{i}^{\cri(X)+\mu},\Lambda_{i}^{\cri(X)-\mu}\right)$. Moreover by
basic nondegenerate Morse theory, for each sufficiently large $i$, \ the
critical level $\cri_{i}X$ \ of $X$ in the metric $g_{i}$ will be equal to
$F_{i}(\gamma_{i})$ for some (nondegenerate) closed geodesic $\gamma_{i}$ with
$\ind(\gamma_{i})\leq d\leq \ind(\gamma_{i})+1.$  Note also that
\begin{equation*}
\lim_{i\to \infty}\cri_{i}(X)    =\lim_{i\to \infty}F_{i}(\gamma_{i})=\cri(X).
\end{equation*}

From Ascoli-Arzel\`a's theorem and the continuity of the map
$g \mapsto \cri_g (X)$ we conclude that 
some subsequence of $\{\gamma_{i}\}$
will converge to a closed geodesic $\gamma_{X}$ of length $\cri(X)$ with the
property that
\[
\ind(\gamma_{X})\leq \ind(\gamma_{i})\leq d\leq \ind+
\nulli(\gamma_{i})+1\leq \ind+\nulli(\gamma_{X})+1 \,.
\]

By the {\em Resonance Theorem}~\ref{thm:resonance}, if $d$ is sufficiently large, 
$\frac{d}{\cri(X)}$ is arbitrarily close to 
$\overline{\alpha}_{g,\mathbb{Q}}$. 
Using the {\em Resonant Iterates Lemma}~\ref{lem:resonant_iterates},
if $d$ is sufficiently large, we conclude 
$\overline{\alpha}_{\gamma_{X}}
\in \left(\overline{\alpha}_{g,\mathbb{Q}}-\varepsilon,
\overline{\alpha}_{g,\mathbb{Q}}+\varepsilon\right)\,.$ 
Thus if $d$ is sufficiently large, for each nontrivial
homology class $X$ \ there is a closed geodesic 
$\gamma_{X}$ of length $\cri(X)$
with $\overline{\alpha}_{\gamma_{X}}\in
(\overline{\alpha}_{g,\mathbb{Q}}-\varepsilon,\overline{\alpha}_{g,\mathbb{Q}}+\varepsilon)$.

We can assume that $\gamma_{X}$ are isolated geodesics: It follows from
(i) that if (for any $\varepsilon>0$) there were a nonisolated closed geodesic
$\gamma$ with mean frequency 
$\overline{\alpha}_{\gamma,\mathbb{Q}} \in \left(
\overline{\alpha}_{g,\mathbb{Q}}
-\varepsilon,
\overline{\alpha}_{g,\mathbb{Q}}+\varepsilon\right),$ 
then there would be infinitely many closed geodesics
$\gamma_{i}$ with mean frequency in 
$(\overline{\alpha}_{g,\mathbb{Q}}-
\varepsilon,\overline{\alpha}_{g,\mathbb{Q}}+\varepsilon).$ 
Moreover the avearage index $\alpha_{i}$ would be bounded,
and so the sum of the inverted average indices mentioned in the density
theorem would in fact be infinite.

Thus we may (for purposes of proving the Density Theorem), assume
that for sufficiently small $\varepsilon$ all closed geodesics $\gamma$ with
mean frequency 
$\overline{\alpha}_{\gamma}\in(\overline{\alpha}_{g,\mathbb{Q}}-\varepsilon,
\overline{\alpha}_{g,\mathbb{Q}}+\varepsilon)$ 
are isolated.

For each \textit{prime} closed geodesic $\gamma$ with $\overline{\alpha
}_{\gamma}\in
(\overline{\alpha}_{g,\mathbb{Q}}-\varepsilon,
\overline{\alpha}_{g,\mathbb{Q}}+\varepsilon)$,
the inverted index $\frac{1}{\overline{\alpha}_{\gamma}}$
is the number of critical levels per degree contributed by the iterates of
$\gamma$. Let $N(d)$ be the number of distinct critical levels of rational
homology classes of degree $\leq d$. It will be enough to show that
\begin{equation}
\label{eq:LJL}
\underset{d\rightarrow\infty}{\lim}\frac{N(d)}{d}  
\geq\left\{
\begin{array}{ccc}
\frac{1}{n-1}&;& n \text{ odd}\\
\frac{1}{2(n-1)} &;& n \text{  even}
\end{array}
\right.  
 \, .
\end{equation}

We use the description of the homology of $\Lambda$ given in 
Section~\ref{sec:proof_Lemma_C}.
If $n$ is even we use Equation~\ref{eq:QXR}, the 
{\em Duality Lemma}~\ref{lem:duality},
and Equation~\ref{eq:XY} to conclude that the classes $B\bullet
\Theta^{\bullet m}$ have distinct critical values. Since $\bullet\Theta$ has
degree $2(n-1),$ the theorem is proved in this case.

Now consider the case where $n$ is odd. Let $\sigma:H_{k}(\Lambda
)\rightarrow H_{k+1-n}(\Lambda)$ by $\sigma(X)=A\bullet\Delta X$.  Using
Equation~\ref{eq:JKL} we see that there is a nontrivial infinite string of rational
classes
\[
\ldots\underset{\sigma}{\longrightarrow}X_{j}
\text{ \ }\underset{\sigma}{\longrightarrow
}X_{j-1}\underset{\sigma}{\longrightarrow}\ldots\underset{\sigma}{\longrightarrow}
X_{0}=A\,.
\]
Now for all nontrivial $X$ of positive degree, $\cri(\Delta X)\leq \cri(X)$,
$\cri(A\bullet X)\leq \cri(X)$, and in fact $\cri(A\bullet X)<\cri(X)$ unless the
support of the image of $X$ in the level homology maps \textit{onto }$M$ by
the evaluation map. But this would violate the hypothesis that the geodesics
in $\Sigma_{X}$ are isolated. Therefore Equation~\ref{eq:LJL} follows. 
 The theorem
follows for $n$ odd since $\sigma$ has degree $-(n-1)$.

\end{proof}
\begin{proof}(of Corollary~\ref{cor:pinch}) Again we follow~\cite{Rad95}: Under the
given curvature assumption the length of a closed geodesic is bounded from
below by $\pi(1+\lambda^{-1})$. cf.~\cite[Thm.1]{Rad04}, which together with
the lower curvature bound shows that 
the average index satisfies
$\alpha_{\gamma}>n-1$, 
cf.~\cite[Lem.2]{Rad07}.
The statement now follows from the {\em Density
Theorem}~\ref{thm:density}.
\end{proof}
\begin{proof} (of Corollary~\ref{cor:Katok})  \, Let $n,g_{0},
\mathcal{U}, $ and $N\in\mathbb{N}$ be given. 
Pick $D\in\mathbb{N}$ with
\[
D\geq N(2n-2).
\]
Because the curvature is positive in a neighborhood of $g_{0}$, there is a
neighborhood $\mathcal{W}_{0}$ of $g_{0}$ and $L\in\mathbb{R}$
 so that for every $g\in\mathcal{W}$, all closed geodesics on
$\left(S^{n},g\right)$ of length $>L$ have average index satisfying
\[
\alpha_{\gamma}\geq D\,.
\]
There is a neighborhood $\mathcal{W}$ of $g_{0}$ in the space of metrics
so that for every $g\in\mathcal{W}$, all closed geodesics on $(S^{n},g)$
lie in $\mathcal{U}$ \textit{or} have length $>L$. 
\end{proof}
\section{Perturbation result}

\label{sec:perturbation} We can increase the mean frequency $\overline{\alpha%
}=\alpha/\ell$ of a closed geodesic $\gamma$ by increasing the average index 
$\alpha=\alpha_{\gamma}$ or by decreasing the length $\ell=\ell (\gamma).$ \
The Open Mapping Theorem~\ref{thm:open_mapping_theorem} is an immediate
consequence of the following: 

\begin{theorem}
\label{theorem:perturbation} \textrm{\textsc{(Perturbation Theorem)}} 
Given a geodesic $\gamma$ of positive length on a Riemannian or Finsler
manifold $(M,g)$, and an open neighborhood $U$ of a point $p$ on $\gamma$,
at least one of the following is true:

\begin{itemize}
\item[(i)] There is a smooth family of metrics $g^{s}$, $s\ge 0$, with $%
g^{0}=g$, and with $g^{s}=g$ outside $U$, so that $\gamma^{s}=:\gamma$
remains geodesic in the metric $g^{s}$, the average index of $\gamma^{s}$ is
increasing (resp. decreasing) for $s\geq0$ , and with the length of $%
\gamma^{s}$ constant.

\item[(ii)] There is a smooth family of metrics $g^{s,t}$, $0\leq s,t$, with 
$g^{0,0}=g$, with $g^{s,t}=g$ outside $U$, so that $\gamma ^{s,t}=:\gamma $
remains geodesic in the metric $g^{s,t}$, and so that on an open set of the
first quadrant containing the positive $s-$axis, the Poincar\'{e} map
associated to $\gamma ^{s,t}$ has no eigenvalue on the unit circle and
length $(\gamma ^{s,t})=$length$(\gamma )-t$ \ (resp. length$($ $\gamma
^{s,t})=$length$(\gamma )+t$).
\end{itemize}
\end{theorem}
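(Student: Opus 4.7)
The plan is to reduce the problem to a perturbation statement about the linearized Poincaré map $P_\gamma \in \mathrm{Sp}(2n-2)$ of $\gamma$. By Bott's iteration formula, the average index $\alpha_\gamma$ is a continuous function of $P_\gamma$ that depends monotonically on the angles of the \emph{elliptic} eigenvalues of $P_\gamma$ (those on the unit circle but not equal to $\pm 1$) and is locally constant once $P_\gamma$ becomes hyperbolic. The key technical input is a local surjectivity lemma: using Fermi coordinates along $\gamma$ inside $U$, a symmetric $2$-tensor $h$ supported in $U$ and vanishing to appropriate order along $\gamma$ keeps $\gamma$ geodesic under the perturbation $g^s = g + s\,h$, and the linearized map
\[
h \longmapsto \bigl(\dot P_\gamma(h),\; \dot \ell(h)\bigr) \in T_{P_\gamma}\mathrm{Sp}(2n-2) \oplus \mathbb{R}
\]
is surjective. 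This lets one independently prescribe, to first order, the change of the Poincaré map and of the length of $\gamma$, by varying the normal-normal and tangential-tangential components of $h$ respectively.

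\textbf{Case (i):} suppose $P_\gamma$ has an eigenvalue $e^{\pm i\theta}$ with $\theta \notin \{0,\pi\}$. Using the surjectivity, pick $h$ infinitesimally rotating this eigenvalue pair in the desired direction while setting $\dot\ell(h) = 0$. Exponentiate to obtain a family $g^s$, correcting the length to be exactly constant by an implicit-function adjustment of order $s^2$. Bott's formula then yields $\alpha_{\gamma^s}$ strictly monotone in $s$, establishing case~(i).

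\textbf{Case (ii):} suppose $P_\gamma$ has no unit-circle eigenvalue other than possibly $\pm 1$. Lemma~\ref{lem:drei} on $+$-curves provides a real-analytic path $s \mapsto Q^s \in \mathrm{Sp}(2n-2)$ with $Q^0 = P_\gamma$ and $Q^s$ hyperbolic for all $s > 0$; realize this path by metric perturbations $g^{s,0}$ as above, at constant length. Introduce the $t$-parameter by a supplementary conformal perturbation supported in $U$ that scales arclength along $\gamma$ so that $\ell(\gamma^{s,t}) = \ell(\gamma) \mp t$. Hyperbolicity is open in $\mathrm{Sp}(2n-2)$, so absence of unit-circle eigenvalues persists on an open $(s,t)$-set containing the positive $s$-axis. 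The main obstacle in both cases is the local surjectivity of $h \mapsto (\dot P_\gamma(h), \dot\ell(h))$ under the constraint $\mathrm{supp}\,h \subset U$ together with the geodesic condition on $\gamma$; one produces sufficiently many independent variations by taking $h = \phi(\tau)\,\Psi$ with $\tau$ arclength along $\gamma\cap U$ and $\Psi$ a quadratic form in the normal directions, and verifying that the induced variation of the Jacobi endpoint map spans $T_{P_\gamma}\mathrm{Sp}(2n-2)$ as $\Psi$ ranges, while $\dot\ell$ is controlled independently by the tangential part of $h$. The remaining secondary difficulty in case~(ii) --- the possible degeneracy of $P_\gamma$ at $\pm 1$ --- is precisely what Lemma~\ref{lem:drei} is designed to resolve.
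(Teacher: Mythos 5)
Your central technical claim --- that the map $h \mapsto (\dot P_\gamma(h),\dot\ell(h))$ onto $T_{P_\gamma}\mathrm{Sp}(2n-2)\oplus\mathbb{R}$ is surjective under the constraints that $\gamma$ stays geodesic and $\mathrm{supp}\,h\subset U$ --- is neither proved nor obviously true, and in fact the paper's argument is designed to avoid precisely this. A perturbation that keeps $\gamma$ geodesic and preserves parallel transport changes $P_\gamma$ through the curvature variation $\tau(t)$ only, and the resulting $JP_\gamma^{-1}\dot P_\gamma$ is an integral $\int_0^\ell X_0^*(t)\,\mathrm{diag}(\tau(t),0)\,X_0(t)\,dt$ (Lemma~\ref{lem:eins}, Equation~\ref{eq:star}). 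The set of matrices reachable in this way is the span of the cone $\{X_0^*(t)\,\mathrm{diag}(\tau,0)\,X_0(t)\}$ and there is no reason it fills the whole Lie algebra for every $\gamma$; the paper only needs (and only proves, via Lemma~\ref{lem:zwei}) that one can hit the interior of the positive cone. Your case split also does not match the true dichotomy: you branch on whether $P_\gamma$ has elliptic eigenvalues, but even when it does, a generic push of $P_\gamma$ can send the elliptic pair off the unit circle via a Krein collision without ever changing the average index, so your Case~(i) argument (``rotate the eigenvalue, apply Bott monotonicity'') can fail. The correct dichotomy, proved in Lemma~\ref{lem:drei}, is phrased in terms of the behavior of the $+$-curve $s\mapsto P(s)$, not the initial spectrum of $P_\gamma$: along any $+$-curve, either the Bott intersection numbers jump (so the average index strictly increases) or the whole spectrum leaves the unit circle.

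You also misread Lemma~\ref{lem:drei}: it does not \emph{construct} a hyperbolizing path $Q^s$; it is a dichotomy statement about a $+$-curve that you must first produce geometrically. The paper's route is: Lemma~\ref{lem:zwei} builds a concrete curvature-increasing perturbation in Fermi coordinates supported in $U$ that preserves $\gamma$, its length, and parallel transport; Lemma~\ref{lem:eins} (Bott's $+$-cone computation) shows the resulting Poincar\'e maps form a $+$-curve; Lemma~\ref{lem:drei} then yields either case~(i) or hyperbolicity; and in the hyperbolic case Lemma~\ref{lem:drei3} adds a second perturbation changing length while keeping parallel transport, with openness of hyperbolicity supplying the required $(s,t)$-neighborhood. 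Your length-changing step (a ``conformal perturbation supported in $U$'') is compatible in spirit with Lemma~\ref{lem:drei3}, but without the $+$-cone structure and the correct dichotomy the argument as written has a real gap.
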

Note that in the latter case the average index is constant. In either case
the mean frequency is increasing (resp. decreasing).

Let $\mathcal{M}$ be the space of real $2(n-1)$ by $2(n-1)$ matrices, and
let \\$\mathfrak{S}=\mathrm{Sp}\left( 2(n-1),\mathbb{R}\right) =\{X\in 
\mathcal{M}:X^{\ast }JX=J\},$ where 
\begin{equation*}
J=\left( 
\begin{array}{cc}
0 & -I \\ 
I & 0%
\end{array}%
\right) \in \mathfrak{S}\,.
\end{equation*}%
The Lie algebra is $\mathfrak{sp}(2(n-1))=\{A\in \mathcal{M}:A^{\ast
}J+JA=0\}\,.$ The \textit{positive cone} in the Lie algebra is $\{A\in 
\mathfrak{sp}(2(n-1)):JA>0\}$. The positive cone is invariant under
conjugation by an element of $\mathfrak{S}$. A curve $P(s)$ in $\mathfrak{S}$
is a \textit{+-curve} if the left translate of each tangent vector to the
identity lies in the +-cone, i.e. 
\begin{equation*}
JP^{-1}\frac{dP}{ds}>0
\end{equation*}%
for all $s$ . 

\begin{lemma}
\label{lem:eins} Given a metric $g_{0},$ and a closed geodesic $\gamma,$
consider a $1$-parameter perturbation $g_{s}$ of the metric on $M$ that

\begin{enumerate}
\item keeps $\gamma$ geodesic of the same length,

\item does not alter parallel transport along $\gamma,$

\item does not decrease the sectional curvature of any plane containing $%
\gamma^{\prime}$ along $\gamma,$

\item increases the sectional curvature of every plane containing $%
\gamma^{\prime}$ along $\gamma$ to first order in the perturbation parameter 
$s,$ in a neighborhood of some point on $\gamma\,.$
\end{enumerate}

\textit{Then to first order the Poincare map }$P(s)$\textit{\ moves in a
+-direction at }$s=0$\textit{, that is }%
\begin{equation*}
JP^{-1}\left.\frac{dP}{ds}\right|_{s=0}>0\,.
\end{equation*}
\end{lemma}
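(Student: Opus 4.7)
The plan is to realize the linearized Poincar\'e map $P(s)$ as the time-$L$ value of a linear symplectic flow on the normal bundle of $\gamma$ generated by the curvature operator of $g^s$, then differentiate in $s$ via variation of constants. The symplectic identity $J\Phi^{-1}=\Phi^{\ast}J$ will convert $JP^{-1}P'(0)$ into an integral of a congruence transform of $J\dot A(\tau)$, and the stated curvature hypotheses will manifest as pointwise positivity of $J\dot A$. Strict positivity of the whole integral will then follow from a uniqueness argument for Jacobi fields.

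First, using hypothesis (2), I would fix an $s$-independent parallel orthonormal frame along $\gamma$ and use it to trivialize $(\gamma')^{\perp}\oplus(\gamma')^{\perp}\cong\mathbb{R}^{2(n-1)}$ with its canonical symplectic structure $J$. Hypothesis (1) guarantees $\gamma$ is still a geodesic parameterized proportionally to arc length for each $s$, so the linearized geodesic flow in this frame is governed by $\dot\Phi=A(t,s)\Phi$, $\Phi(0,s)=I$, with
\[
A(t,s)=\begin{pmatrix}0 & I\\ -R(t,s) & 0\end{pmatrix}\in\mathfrak{sp}(2(n-1)),
\]
where $R(t,s)$ is the symmetric matrix of $v\mapsto R_{g^s}(v,\gamma')\gamma'$ in the fixed frame. (In the Finsler case $R$ is the flag curvature operator; the symplectic setup is identical.) Then $P(s)=\Phi(L,s)$.

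Second, differentiating $\dot\Phi=A\Phi$ in $s$ at $s=0$ with $\partial_s\Phi(0,s)=0$ yields, by variation of constants,
\[
P^{-1}P'(0)=\int_0^L \Phi(\tau)^{-1}\dot A(\tau)\,\Phi(\tau)\,d\tau,
\]
where $\Phi=\Phi(\cdot,0)$ and $\dot A=\partial_sA|_{s=0}$. Since $\Phi(\tau)\in\mathfrak{S}$ we have $J\Phi^{-1}=\Phi^{\ast}J$, so multiplying through by $J$ gives
\[
JP^{-1}P'(0)=\int_0^L \Phi(\tau)^{\ast}\bigl(J\dot A(\tau)\bigr)\Phi(\tau)\,d\tau.
\]
A direct computation from the block form of $A$ shows $J\dot A(\tau)=\mathrm{diag}\bigl(\dot R(\tau),0\bigr)$. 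Hypothesis (3) gives $\langle\dot R(\tau)v,v\rangle\geq0$ for every parallel vector $v\perp\gamma'$, i.e.\ $\dot R(\tau)\succeq0$ for all $\tau$, and hypothesis (4) gives $\dot R(\tau)\succ0$ for $\tau$ in some open subinterval $I\subset[0,L]$. Hence the integrand is positive semidefinite on $[0,L]$ and positive definite on the ``position block'' for $\tau\in I$, so the integral is positive semidefinite.

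The only delicate point, and the step I expect to require genuine care, is promoting this to strict positive definiteness, since $J\dot A$ is only semidefinite (the ``velocity block'' is identically zero). If $w\in\mathbb{R}^{2(n-1)}$ lies in the kernel of $JP^{-1}P'(0)$, then for every $\tau\in I$ the first $(n-1)$ components of $\Phi(\tau)w$ must vanish. Interpreting $\Phi(\tau)w=(Y(\tau),Y'(\tau))$ as the pair associated to the Jacobi field $Y$ with initial data $w$, this says $Y\equiv0$ on $I$, and uniqueness for the Jacobi equation forces $Y\equiv0$ on $[0,L]$; hence $w=0$. The remaining bookkeeping beyond the Riemannian case is merely the verification in the Finsler setting that the flag curvature operator really furnishes a symmetric $R(t,s)$ satisfying conditions (3) and (4) under the same sectional/flag hypotheses, after which the argument is identical.
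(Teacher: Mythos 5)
Your proof is correct and follows essentially the same route as the paper's, which is Bott's argument: trivialize the normal bundle by a parallel frame, express $JP^{-1}P'(0)$ as the semidefinite integral $\int_0^\ell X_0^\ast\,\mathrm{diag}(\dot R,0)\,X_0\,dt$, and promote to strict positivity via uniqueness for the Jacobi ODE (the paper phrases this as the zeros of $x(t)$ being isolated, which is the same fact). One small imprecision: a parallel frame along a closed geodesic is generally not periodic, so the Poincar\'e map is $P(s)=\widehat Q\,\Phi(L,s)$ for a fixed orthogonal symplectic holonomy block $\widehat Q$, not $\Phi(L,s)$ itself; but since $\widehat Q$ is $s$-independent it cancels in $P^{-1}P'(0)$, and your final formula and conclusion are unaffected.
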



\begin{proof}
This follows from the same argument as Bott's proof of \cite[Prop.3.1,
p.204-205]{Bo} with minor modification. Let $\gamma $ have length $\ell $
and speed $1$. \ Pick an orthonormal basis $\mathbf{U}_{1},...\mathbf{U}%
_{n-1}$ for the subspace of $T_{\gamma (0)}M$ orthogonal to $\gamma ^{\prime
}(0)$, and use parallel transport to get a basis (with $\gamma ^{\prime }(t)$%
) $\mathbf{U}_{1}(t),...\mathbf{U}_{n-1}(t)$ for $T_{\gamma (t)}M$. \ Let $\
X_{s}(t)\in \mathfrak{S}$ be the solution of 
\begin{equation*}
\frac{\partial X_{s}(t)}{\partial t}=A_{s}(t)X_{s}(t)
\end{equation*}%
with $X_{s}(0)=I$ \ for all $s$, \ where $A$ is the matrix 
\begin{equation*}
A_{s}=\left( 
\begin{array}{cc}
0 & I \\ 
-R_{s} & 0%
\end{array}%
\right)
\end{equation*}%
with $R_{s}$ giving the sectional curvatures $\left\langle R(\gamma ^{\prime
},\bullet )\gamma ^{\prime },\bullet \right\rangle $ in terms of the
parallel translated basis. \ Suppose that parallel transport along $\gamma $
to the point $t=\ell $ results in the orthogonal symplectic transformation $%
Q $ \ on our basis, that is \ $\mathbf{U}_{i}(\ell )={\textstyle\sum }%
\mathbf{U}_{j}(0)Q_{ji}$. The Poincar\'{e} map $P(s)$ under our hypotheses
is given by 
\begin{equation*}
P(s)=\widehat{Q}X_{s}(\ell )
\end{equation*}%
where 
\begin{equation*}
\widehat{Q}=\left( 
\begin{array}{cc}
Q & 0 \\ 
0 & Q%
\end{array}%
\right) ,
\end{equation*}%
and $P(s)$ moves in a +-direction at $s=0$ if and only if $X_{s}(\ell )$
does, since 
\begin{equation*}
J(X_{s}(\ell ))^{-1}\left. \frac{\partial X_{s}(\ell )}{\partial s}%
\right\vert _{s=0}=JP^{-1}\left. \frac{dP}{ds}\right\vert _{s=0}.
\end{equation*}

By the same argument as in Bott's paper, we have (see~\cite[3.2]{Bo}): 
\begin{equation}  \label{eq:star}
J\left\{X_{s}^{-1} \left.\frac{\partial X_{s}}{\partial s}%
\right\}\right|_{t=\ell;s=0}=\underset{0}{\overset{\ell}{{\textstyle\int}}}%
X_{0}^{\ast}T(t)X_{0}(t)dt
\end{equation}
where 
\begin{equation*}
T(t)=\left( 
\begin{array}{cc}
\tau(t) & 0 \\ 
0 & 0%
\end{array}
\right)
\end{equation*}
with 
\begin{equation*}
\tau(t)=\left.\frac{\partial R_{s}(t)}{\partial s}\right|_{s=0}\geq0
\end{equation*}
and thus $\tau(t)>0$ by hypothesis for all $t$ in some open set. A nonzero
vector $v=(x,y)\in\mathbb{R}^{2(n-1)}$ defines the initial conditions of a
nontrivial solution of the Jacobi equation 
\begin{align*}
\frac{\partial x(t)}{\partial t} & =y(t) \\
\frac{\partial y(t)}{\partial t} & =-R_{0}(t)x(t)
\end{align*}
and 
\begin{equation*}
\left\langle X_{0}^{\ast}T(t)X_{0}(t)v,v\right\rangle =\left\langle
\tau(t)x(t),x(t)\right\rangle .
\end{equation*}
Since the zeroes of $x(t)$ are isolated, the quadratic form given by
Equation~\ref{eq:star} evaluated at $v$ is positive.
\end{proof}


\begin{remark} 
\label{rem:finsler} 
\rm
In this section we state the results for
Riemannian and Finsler metrics. The proofs will be carried out for
simplicity only in the Riemannian case. Here we point out the changes in the
proofs for the Finsler case: For the following results compare~\cite{Rad04}.
Assume that $f:TM\rightarrow \mathbb{R}^{\ge 0}$ is a Finsler metric on $M.$
For a nowhere vanishing vector field $V$ defined on an open subset of the
manifold we can define a Riemannian metric $g^V$ by 
\begin{equation*}
g^V(X,Y)=\frac{1}{2}\left.\frac{\partial^2 }{\partial s \partial t}%
\right|_{s=t=0} f^2\left(V+sX+tY\right)\,.
\end{equation*}
If $x=(x_1,x_2,\ldots, x_n)$ are coordinates on the manifold and $%
(x,y)=(x_1,\ldots,x_n,$ $y_1,\ldots,y_n)$ are the induced coordinates on the
tangent bundle, then the metric coefficients are given by: 
\begin{equation*}
g^V_{ij}(x,y)=\frac{1}{2}\, \frac{\partial^2}{\partial y_i \partial y_j}
f^2(x,y)\,.
\end{equation*}
A curve $x(t)$ is a geodesic if the Lagrange equations 
\begin{equation}  \label{eq:Lagrange}
\frac{d}{dt} \frac{\partial f^2}{\partial y_j}(x(t),\dot{x}(t))- \frac{%
\partial f^2}{\partial x_j}(x(t),\dot{x}(t))=0
\end{equation}
hold for $j=1,\ldots,n.$ For a geodesic $\gamma$ we take a nowhere vanishing
vector field $V$ defined in a neighborhood of the geodesic extending the
velocity vector field $\gamma^{\prime}$ along $\gamma.$ Then $\gamma$ is
also a geodesic of the Riemannian metric $g^V$ and the Jacobi fields as well
as the parallel transport along $\gamma$ with respect to the Finsler metric
and with respect to this Riemannian metric coincide. Therefore also the
linearized Poincar\'e mappings of $\gamma$ with respect to the Finsler
metric and the Riemannian metric coincide. Let $\sigma$ be a plane in the
tangent space $T_{\gamma(t)}M$ containing $\gamma^{\prime}(t)$. Then the
flag curvature $K\left(\gamma^{\prime};\sigma\right)$ of the flag $%
(\gamma^{\prime};\sigma)$ with respect to the Finsler metric $f$ coincides
with the sectional curvature $K[g^V](\sigma)$ of a two-plane $\sigma$ with
respect to the Riemannian metric $g^V.$ If we extend the following proofs to
the Finsler case we use the Riemannian metric $g^V$ and its sectional
curvature. 
\end{remark}


\begin{lemma}
\label{lem:zwei} Given a metric $g=g^{(0)}$, a closed geodesic $\gamma,$ and
a neighborhood $U$ of the point $p=\gamma(0)$ there is a one-parameter
smooth family $\overline{g}=g^{(s)}, s\in [0,\beta)$ of perturbations of the
metric $g=g^{(0)}$ supported in $U$ such that the following properties hold
for all $s \in [0,\beta):$

\begin{enumerate}
\item $\gamma$ is a closed geodesic of $\overline{g}=g^{(s)}$ of the same
length.

\item Parallel transport along $\gamma$ is unchanged.

\item Sectional curvature of any plane containing $\gamma^{\prime}$ does not
decrease, i.e the sectional curvature $\overline{K}(\sigma)=K[g^{(s)}](%
\sigma)$ with respect to the Riemannian metric $\overline{g}=g^{(s)}, s>0$
of any plane $\sigma \subset T_{\gamma(t)}M$ containing $\gamma^{\prime}(t)$
satisfies: $K [g^{(s)}](\sigma)\ge K[g](\sigma).$

\item There is $\eta>0$ such that the following holds: For all planes $%
\sigma $ containing $\gamma^{\prime}(t)$ the sectional curvature $%
K[g^{(s)}](\sigma) $ of the plane $\sigma$ with respect to the Riemannian
metric $g^{(s)}$ satisfies: 
\begin{equation*}
K[g^{(s)}](\sigma) \ge K[g](\sigma) + s
\end{equation*}
for all $t\in (-\eta,\eta)$ and $s \in[0,\beta).$
\end{enumerate}
\end{lemma}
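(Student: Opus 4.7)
The plan is to construct $g^{(s)}$ as an explicit ``Fermi-coordinate bump'' perturbation of only the $tt$-component of the metric, chosen so that the perturbation and its normal derivatives vanish along $\gamma$.

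First I would introduce Fermi coordinates $(t,y_1,\ldots,y_{n-1})$ along $\gamma$ in a tubular neighborhood $V$ of $p=\gamma(0)$: the $t$-axis parameterizes $\gamma$ by arclength and $(y_1,\ldots,y_{n-1})$ are built from a parallel orthonormal frame of the normal bundle. The defining property of Fermi coordinates is $g_{ab}(t,0)=\delta_{ab}$ and $\partial_c g_{ab}(t,0)=0$, so all Christoffel symbols of $g$ vanish along $\gamma$. Shrinking $U$ if necessary, I may assume $U\subset V$ and $\gamma\cap U=\{y=0,\ |t|<\delta\}$ for some $\delta>0$; in particular $\gamma|_{[-\delta,\delta]}$ is embedded, which ensures that a coordinate perturbation supported in this slab descends unambiguously to a metric perturbation on $M$.

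Next I would pick nonnegative smooth cutoffs $\chi$ supported in $(-\delta,\delta)$ with $\chi(t)\geq 2$ on a smaller interval $(-\eta,\eta)$, and $\rho$ supported in a small ball $B_r(0)\subset\mathbb{R}^{n-1}$ with $\rho(0)=1$, choosing $r$ so that $\{|t|<\delta,\ |y|<r\}\subset U$. Define
\[
g^{(s)}_{tt}(t,y)\;:=\;g^{(0)}_{tt}(t,y)\;-\;s\,\chi(t)\,\rho(y)\,|y|^{2},
\]
and leave all other components unchanged. Since $h:=g^{(s)}-g^{(0)}$ vanishes to second order in $y$ along $\gamma$, all Christoffel symbols of $g^{(s)}$ still vanish along $\gamma$. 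This immediately gives (1) (the geodesic equation on $\gamma$ is preserved, and $g^{(s)}_{tt}(t,0)=1$ so the arclength parameterization and the length of $\gamma$ are unchanged) and (2) (parallel transport along $\gamma$ is determined by the coefficients $\Gamma^c_{tb}|_\gamma$, which remain zero).

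For (3) and (4), I would apply the identity, valid at any point where the Christoffel symbols vanish,
\[
R_{ijkl}\;=\;\tfrac{1}{2}\bigl(\partial_i\partial_k g_{jl}+\partial_j\partial_l g_{ik}-\partial_i\partial_l g_{jk}-\partial_j\partial_k g_{il}\bigr).
\]
Taking $(i,j,k,l)=(t,i,t,j)$ and differentiating in $s$, only $\partial_i\partial_j h_{tt}|_{y=0}=-2s\,\chi(t)\rho(0)\delta_{ij}$ survives, so a short computation using the sign convention $K(\partial_t,v)=-R_{titj}v^iv^j/|v|^2$ yields
\[
\left.\partial_s K[g^{(s)}](\sigma)\right|_{s=0}\;=\;\chi(t)\rho(0),
\]
for every plane $\sigma\subset T_{\gamma(t)}M$ containing $\gamma'(t)$. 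This infinitesimal change is nonnegative everywhere and at least $2$ on $(-\eta,\eta)$.

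The main obstacle is upgrading this first-order statement to the finite inequalities demanded by (3) and (4) for every $s\in[0,\beta)$. Because $h$ has compact support, the inverse metric and Riemann tensor are smooth functions of $s$ with uniform quadratic remainder, giving $K[g^{(s)}](\sigma)-K[g](\sigma)\geq s\,\chi(t)\rho(0)-Cs^{2}$ on the support, with $C$ depending only on $g,\chi,\rho$. Choosing $\beta$ so small that $C\beta\leq 1$ then forces nonnegative change everywhere (condition (3)) and change at least $s$ on $(-\eta,\eta)$ (condition (4), using $\chi(t)\rho(0)\geq 2$ there). The Finsler case follows, as in Remark~\ref{rem:finsler}, by applying the same construction to the associated Riemannian metric $g^V$.
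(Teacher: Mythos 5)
Your construction is the same as the paper's: the paper works in Fermi coordinates and perturbs only the $g_{00}$ component by $-s\,a(t)\,a(r^2)\,r^2$, a term quadratic in the normal coordinates, which keeps the Christoffel symbols along $\gamma$ equal to zero and gives (1) and (2) exactly as you argue. But your final step --- promoting $\partial_s K|_{s=0}$ to a finite inequality via a uniform quadratic remainder $-Cs^2$ --- has a gap in the verification of (3). Your estimate $K[g^{(s)}](\sigma)-K[g](\sigma)\geq s\,\chi(t)\rho(0)-Cs^{2}$ with a uniform constant $C$ cannot give nonnegativity everywhere: at points $t$ near the boundary of $\mathrm{supp}(\chi)$, $\chi(t)$ is positive but arbitrarily small, and no fixed $\beta$ makes $s\,\chi(t)\rho(0)-Cs^{2}\ge 0$ for all such $t$ and all $s\in[0,\beta)$.

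The gap closes, and the proof becomes what the paper actually does, once you observe that the change in sectional curvature along $\gamma$ is \emph{exactly} linear in $s$, with no remainder at all. Since the perturbation vanishes to second order in $y$, the Christoffel symbols of $g^{(s)}$ vanish at every point $(t;0)$ for every $s$; so the Riemann tensor there is given by exactly the linear-in-second-derivatives identity you quoted (the quadratic-in-$\Gamma$ terms are zero, not merely $O(s^2)$), the inverse metric at $(t;0)$ is $\delta^{ij}$ for all $s$, and the only second derivative that changes is $\partial_i\partial_j g_{00}(t;0)$, which changes linearly in $s$. The normalization of $K$ also involves only $g_{ij}(t;0)=\delta_{ij}$, which is $s$-independent. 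Hence $K[g^{(s)}](\sigma)=K[g](\sigma)+s\,\chi(t)\rho(0)$ identically, which yields (3) for all $s\ge 0$ and (4) on $(-\eta,\eta)$ with no smallness restriction on $\beta$.
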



\begin{proof}
We follow the lines of the Proof of~\cite[Prop.3.3.7]{Kl82}: Choose an
orthonormal frame $E_0,E_1,\ldots, E_{n-1}$ in $T_pM=T_{\gamma(0)}M$ with $%
E_0=\gamma^{\prime}(0)$ and extend this frame to an orthonormal and parallel
frame field $E_0(t),E_1(t),E_2(t),\ldots,E_{n-1}(t)$ (parallel with respect
to $g=g^{(0)}$) with $E_0(t)=\gamma^{\prime}(t)$ along $\gamma=\gamma(t).$
Then the mapping 
\begin{equation*}
(x_0;x_1,\ldots,x_{n-1})=(t;x_1,x_2,\ldots,x_{n-1})=(t;x) \mapsto
\exp_{\gamma(t)}\left( \sum_{i=1}^{n-1} x_i E_i(t)\right)
\end{equation*}
defines \emph{Fermi coordinates} along the geodesic $\gamma$ in a
neigbhorhood of $0 \in \mathbb{R}^n$ resp. in a neighborhood $%
U^{\prime}\subset M$ of $p=\gamma(0).$ Here $\exp$ denotes the exponential
mapping of the Riemannian metric $g.$

We denote the coordinate fields by $\partial_0=\partial / \partial
x_0=\partial /\partial t; \partial_1= \partial/\partial x_1,\ldots,
\partial_{n-1}=\partial/\partial x_{n-1}$ hence along $\gamma$ we have: $%
E_i(t)=\partial_i (t;0).$ 
Then the following equations hold for the
coefficients $g_{ij}=g\left(\partial_i,\partial_j\right)$ of the Riemannian
metric $g,$ the Christoffel symbols $\Gamma_{ij}^k$ and the coefficients $%
R_{ijkl}= g\left(R\left(\partial_i,\partial_j\right) \partial_k ,
\partial_l\right):$ 
\begin{eqnarray*}
g_{ik}(t;0)=\delta_{ik}; g_{ik,l}(t;0)=\frac{\partial g_{ik}(t;0)}{\partial
x_l}=0 \\
R_{0ik0}(t;0)= -\frac{1}{2} g_{00,ik}(t;0)= -\frac{1}{2} \frac{\partial^2
g_{00}(t;0)}{\partial x_i\partial x_k}
\end{eqnarray*}
In particular the Christoffel symbols $\Gamma_{ij}^k$ vanish along $%
\gamma=\gamma(t):$ $\Gamma_{ij}^k(t;0)=0$ for all $t$ since the coordinate
fields $\partial_1,\ldots,\partial_{n-1}$ are parallel along $\gamma.$ We
introduce the function $r:=r(x)$ by the equation $r^2=\sum_{1\le i\le n-1}
x^2_i$ and define a small perturbation $\overline{g}=g^{(s)}$ of the
Riemannian metric $g=g^{(0)}$ by: 
\begin{equation}  \label{eq:gs1}
g^{(s)}_{ij}(t;x)= \overline{g}_{ij}(t;x)=g_{ij}(t;x)-s
\delta_{i0}\delta_{j0}a(t) a(r^2) r^2\,.
\end{equation}
For some small $\eta>0$ the non-negative function $a$ satisfies: $a(t)=0$
for $|t|\ge 2 \eta$ and $a(t) =1$ for $|t|\le\eta.$ Then $\overline{g}%
_{ij}(t;0)=\delta_{ij}; \overline{g}_{ij,k}(t;0)=0$ and the Christoffel
symbols $\overline{\Gamma}_{ij}^k(t;0)$ along $\gamma$ satisfy $\overline{%
\Gamma}_{ij}^k(t;0)=\Gamma_{ij}^k(t;0)=0,$ i.e. the coordinate fields $%
\partial_1,\ldots,\partial_{n-1}$ are parallel along $\gamma$ with respect to 
$g^{(s)}.$ Therefore the curve $\gamma=\gamma(t)$ is also a geodesic of the
Riemannian metric $g^{(s)}$ and the length and the parallel transport along $%
\gamma$ is unchanged. This proves the first two claims.

Then we obtain for the Riemannian curvature tensor $\overline{R}(t;0)=%
\overline{R}_{ijkl}(t;0)$ of the Riemannian metric $g^{(s)}$ along the curve 
$\gamma=\gamma(t)$ from Equation~\ref{eq:gs1}: 
\begin{eqnarray}
\overline{R}_{i00i}(t;0)&=& \overline{R}_{i00}^i(t;0) =\overline{\Gamma}%
_{00,i}^i(t;0)- \overline{\Gamma}_{i0,0}^i(t;0) \\
&=&\frac{1}{2}\left( 2\overline{g}_{0i.0i}(t;0)-\overline{g}_{00,ii}(t;0)-%
\overline{g}_{ii,00}(t;0)\right) \\
&=& R_{i00i}(t;0)+s a(t) \,.
\end{eqnarray}
For the computation we use the standard formulas for the Christoffel
symbols: 
\begin{equation*}
\Gamma_{ij}^k= \frac{1}{2}g^{kl}\left(g_{li,j}+g_{jl,i}-g_{ij,l}\right)
\end{equation*}
and for the Riemann curvature tensor $R(\partial_i,\partial_j)%
\partial_k=R_{ijk}^l \partial_l:$ 
\begin{equation*}
R_{ijk}^l= \Gamma_{jk,i}^l-\Gamma_{ik,j}^l+ \Gamma_{jk}^r \Gamma_{ir}^l-
\Gamma_{ik}^r \Gamma_{jr}^l
\end{equation*}
with $R_{ijkl}=R_{ijk}^m g_{ml}.$ Here we use Einstein's sum convention. 
Then the sectional curvature $K[g^{(s)}]\left(
\partial_0,\partial_i\right)\left(t;0\right)= \overline{K}%
\left(\partial_0,\partial_i\right)$ of a plane generated by $%
\partial_0=\gamma^{\prime}(t)$ and $\partial_i$ is given by: $\overline{K}%
(\partial_0,\partial_i)=R_{i00i}(t;0)\,.$ This implies the third and the
fourth statement since $a(t)\ge 0$ for all $t$ and $a(t)=1$ for all $t \in
(-\eta, \eta).$
\end{proof}


\begin{remark}
\label{rem:finslerA} 
\rm
In the Finsler case the small perturbation of
the Finsler metric $f=f^0$ can be defined by as follows: 
\begin{eqnarray*}  \label{eq:finsler-perturbation}
(f^{(s)})^2\left(t,x_1,\ldots,x_{n-1},y_0,y_1,\ldots,y_{n-1}\right)= \\
f^2\left(t,x_1,\ldots,x_{n-1},y_0,y_1,\ldots,y_{n-1}\right)-s y_0^2
a(t)a(r^2)r^2
\end{eqnarray*}
Then $\gamma$ is also a geodesic of the Finsler metric $f^{(s)},$ as one can
check with the Lagrange equation~\ref{eq:Lagrange}. We let $V=\partial_0,$
and use the osculating Riemannian metric $g_s^V$ of the Finsler metric $%
f^{(s)}:$ 
\begin{equation}
(g_s^V)_{ij}= (g_0^V)_{ij}-s a\delta_{i0}\delta_{j0}a(t)a(r^2)r^2\,.
\end{equation}
Hence $g_s^V$ is of the form of the metric $\overline{g}$ given in Equation~%
\ref{eq:gs1} and we can proceed as in the above proof to obtain the result
also for the Finsler case. 
\end{remark}


\begin{lemma}
\label{lem:drei3} Given a metric $g=g^{(0)}$, a closed geodesic $\gamma$ of
length $L=L(\gamma)$ and a neighborhood $U$ of the point $p=\gamma(0)$ there
is a one-parameter smooth family $\overline{g}=g^{(s)}, s\in [0,\beta)$ of
perturbations of the metric $g=g^{(0)}$ supported in $U$ such that the
following properties hold for all $s \in [0,\beta):$

\begin{enumerate}
\item $\gamma$ is up to parametrization a closed geodesic of $\overline{g}%
=g^{(s)}$ of length $L+s.$

\item Parallel transport along $\gamma$ is unchanged.
\end{enumerate}
\end{lemma}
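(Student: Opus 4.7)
The plan is to perturb $g$ tangentially along $\gamma$ by a bump localized in $U$, using Fermi coordinates exactly as in the proof of Lemma~\ref{lem:zwei}. Write $(t,x_1,\ldots,x_{n-1})$ for Fermi coordinates around $p=\gamma(0)$ built from a parallel orthonormal frame $E_0=\gamma',E_1,\ldots,E_{n-1}$ along $\gamma$, and set $r^2=\sum_k x_k^2$. Pick a nonnegative bump $\phi\in C^\infty(\mathbb{R})$ with $\phi(0)>0$ and support in a small interval $(-\varepsilon,\varepsilon)$ of $t$-values contained in $U\cap\gamma$, and a transverse cutoff $a\in C^\infty(\mathbb{R}_{\geq 0})$ with $a(0)=1$, $a'(0)=0$, supported in a transverse disk contained in $U$. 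Define the one-parameter family
\begin{equation*}
g^{(u)}_{ij}(t,x)=g_{ij}(t,x)+u\,\phi(t)\,a(r^2)\,\delta_{i0}\delta_{j0}.
\end{equation*}

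The first step is to check geodesicity and parallel transport. Along $\gamma$ only $g_{00}$ is altered, to $1+u\phi(t)$; the condition $a'(0)=0$ ensures that all transverse first derivatives of $g^{(u)}$ along $\gamma$ continue to vanish, preserving the Fermi normal form to first order. A direct Christoffel computation then gives $\Gamma^k_{ij}(t,0)=0$ for every triple $(i,j,k)$ other than
\begin{equation*}
\Gamma^0_{00}(t,0)=\frac{u\,\phi'(t)}{2\bigl(1+u\phi(t)\bigr)}.
\end{equation*}
Vanishing of $\Gamma^k_{00}(t,0)$ for $k\geq 1$ means $\gamma$ still has zero transverse acceleration, so its image is a geodesic of $g^{(u)}$ (after reparametrizing by $g^{(u)}$-arclength). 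Vanishing of $\Gamma^k_{0j}(t,0)$ for $k,j\geq 1$ means the normal frame $\partial_1,\ldots,\partial_{n-1}$ remains parallel, so parallel transport on the normal bundle of $\gamma$ is completely unchanged; in the tangential direction $\partial_0$ is rescaled by the conformal factor $\sqrt{(1+u\phi(t_1))/(1+u\phi(t_2))}$ when transported from $t_1$ to $t_2$, and this factor equals $1$ as soon as one traverses the whole closed loop (since $\phi$ vanishes near both endpoints of its support), so the holonomy of $\gamma$ is also unchanged.

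The second step is to choose the parameter so that the length is exactly $L+s$. The $g^{(u)}$-length of $\gamma$ is
\begin{equation*}
\mathcal{L}(u)=\int_0^L\sqrt{1+u\phi(t)}\,dt,
\end{equation*}
smooth in $u$ with $\mathcal{L}(0)=L$ and $\mathcal{L}'(0)=\tfrac12\int_0^L\phi(t)\,dt>0$. By the implicit function theorem there is a smooth inverse $u=u(s)$ on some interval $[0,\beta)$ with $\mathcal{L}(u(s))=L+s$, and we set $g^{(s)}:=g^{(u(s))}$. For the Finsler case we apply the analogous perturbation to $f^2$ and work with the osculating Riemannian metric along $\gamma$, exactly as in Remark~\ref{rem:finsler} and Remark~\ref{rem:finslerA}.

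The main subtlety lies in the precise reading of ``parallel transport along $\gamma$ is unchanged'': tangential parallel transport is rescaled inside the support of the bump, but the rescaling cancels on completing the loop, and the normal bundle parallel transport is untouched. This is the information actually required for later use in Lemma~\ref{lem:eins} and the Perturbation Theorem~\ref{theorem:perturbation}, whose arguments see only the linearized Poincar\'e map on the normal bundle. The decisive technical choice is $a'(0)=0$; it preserves both the Fermi normal form and, along $\gamma$, the sectional curvatures of planes containing $\gamma'$, so that no additional curvature bookkeeping needs to be carried out here.
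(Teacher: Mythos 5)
Your proof is correct and follows essentially the same route as the paper: the same tangential bump perturbation of $g_{00}$ by $s\,\phi(t)\,a(r^2)$ in Fermi coordinates along $\gamma$, the same Christoffel computation showing that along $\gamma$ only $\Gamma^0_{00}$ is affected so that $\gamma$ remains a geodesic up to reparametrization with the normal coordinate frame still parallel, and the same lengthening argument. You are somewhat more explicit than the paper at two points---replacing its ``by changing the parameter $s$'' with an implicit-function-theorem reparametrization $s\mapsto u(s)$ giving length exactly $L+s$, and spelling out that ``parallel transport unchanged'' is to be read as unchanged normal-bundle holonomy---but these are refinements of the same argument, not a different one.
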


Let $a:\mathbb{R} \rightarrow \mathbb{R}^{\ge0}$ be a smooth function such
that $a(t)=0$ for $|t|\ge 2\eta,$ and $a(t)=1$ for all $t$ with $|t|\le \eta$
for sufficiently small $\eta>0.$ Then we define a perturbation $\overline{g}%
=g^{(s)}, s\in[0,\beta]:$ 
\begin{equation}  \label{eq:gs}
g^{(s)}_{ij}(t;x)= \overline{g}_{ij}(t;x)=g_{ij}(t;x)+s
\delta_{i0}\delta_{j0}a(t) a(r^2)\,.
\end{equation}

\begin{proof}
Along $\gamma$ we have: 
\begin{equation*}
\overline{g}_{ij}(t;0)=\delta_{ij}+s \delta_{i0}\delta_{j0}a(t)
\end{equation*}
and therefore 
\begin{equation*}
\overline{\Gamma}_{ij}^k (t;0)= \left\{ \overline{g}_{ki,j}(t;0)+ \overline{g%
}_{jk,i}(t;0)- \overline{g}_{ij,k}(t;0)\right\}/ \left(2 \overline{g}%
_{kk}(t;0)\right)=0\,.
\end{equation*}
for $(i,j,k)\not=(0,0,0).$ We conclude that $\gamma$ is up to
parametrization a geodesic for the Riemannian metric $g^{(s)}$ of length 
\begin{equation*}
L+s \int_{-2\eta}^{2\eta} \sqrt{a(t)}\,dt\ge L+ 2\eta s
\end{equation*}
and the coordinate fields $\partial_1,\ldots,\partial_{n-1}$ are parallel
along $\gamma$ with respect to the Riemannian metric $g^{(s)}.$ Hence the
parallel transport is unchanged and by changing the parameter $s$ we obtain
the claim.
\end{proof}


\begin{remark}
\rm
\label{rem:finslerB} In the Finsler case we use the following
perturbation: 
\begin{eqnarray*}
(f^{(s)})^2\left(t,x_1,\ldots,x_{n-1},y_0,y_1,\ldots,y_{n-1}\right)= \\
f^2\left(t,x_1,\ldots,x_{n-1},y_0,y_1,\ldots,y_{n-1}\right)+s y_0^2
a(t)a(r^2)
\end{eqnarray*}
Then $\gamma$ is up to parametrization also a geodesic of the Finsler metric 
$f^{(s)}$ (which follows from Equation~\ref{eq:Lagrange}) of length $\ge
L+2\eta s$ and the parallel transport along the reparametrized closed
geodesic is unchanged. 
\end{remark}


\begin{lemma}
\label{lem:drei} Fix $M$, $\gamma\in\Lambda M$, $\ell\in\mathbb{R}$, and let 
$g^{s},$ $0\leq s\leq\delta,$ be a smooth path in the space of metrics on $M$
with the property that for all $s$, $\gamma_{s}=:\gamma$ is geodesic in the
metric $g^{s}$ with length $\ell$. Assume that parallel translation along $%
\gamma_{s}$ is independent of $s$, and let $P(s)$ be the Poincar\'{e} map
(determined up to conjugation in the symplectic group) of $\gamma_{s\text{ .}%
}$ Suppose that $P(s)$ is a +-curve at $s=0.$ Then for some $\varepsilon>0$
one of the following is true:

\begin{itemize}
\item[(i)] the average index of $\gamma_{s}$ is increasing for $s\in$ $%
[0,\varepsilon)$, or

\item[(ii)] $P(s)$ has no eigenvalue on the unit circle for all $%
s\in(0,\varepsilon)$\,.
\end{itemize}
\end{lemma}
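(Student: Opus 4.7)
My plan uses Bott's iteration formula together with the Krein-signature analysis of eigenvalue motion along +-curves in $\mathfrak{S}$. In the setup of Lemma~\ref{lem:eins}, the linearized Poincar\'e flow gives a smooth family of symplectic paths $X_s:[0,\ell]\to\mathfrak{S}$, and since parallel transport is $s$-independent, $P(s)=\hat{Q}X_s(\ell)$ with $\hat{Q}$ fixed. Bott's formula expresses the average index as
\[
\alpha(\gamma_s)=C(s)+\frac{1}{\pi}\sum_{j}\sigma_j\theta_j(s),
\]
where $e^{\pm i\theta_j(s)}$ (with $\theta_j(s)\in(0,\pi)$) are the eigenvalues of $P(s)$ on the upper open arc of the unit circle, $\sigma_j\in\{\pm 1\}$ is their Krein signature, and $C(s)$ is an integer-valued contribution from hyperbolic eigenvalues and eigenvalues at $\pm 1$ that is locally constant on any interval over which no eigenvalue crosses the unit circle.

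The first step is to verify Krein monotonicity: for a simple eigenvalue $e^{i\theta(s)}\ne\pm 1$ of the +-curve $P(s)$, differentiating $P(s)v(s)=e^{i\theta(s)}v(s)$ and pairing against $\bar v$ via the Krein form gives $\sigma\dot\theta(s)=\langle JP^{-1}\dot P v,\bar v\rangle>0$, so each term $\sigma_j\theta_j(s)$ is strictly increasing in $s$ so long as the eigenvalue remains in the open upper arc. The second step is a case split based on $P(0)$. If $P(0)$ has no eigenvalue on the unit circle, continuity of the spectrum gives an $\varepsilon>0$ with the same property on $[0,\varepsilon)$, yielding (ii). If $P(0)$ has an eigenvalue $e^{i\theta_0}$ with $\theta_0\in(0,\pi)$, that eigenvalue remains on the unit circle for small $s>0$ (it can only leave by colliding at $\pm 1$ or with another eigenvalue), and Krein monotonicity of every such eigenvalue forces $\alpha(\gamma_s)$ to be strictly increasing on some $[0,\varepsilon)$, yielding (i).

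The third and hardest step handles the degenerate case where every eigenvalue of $P(0)$ on the unit circle lies at $\pm 1$. Here I will apply a symplectic normal-form analysis near $\pm 1$ (using, for example, Long's splitting-number classification): write $P(0)$ in its symplectic normal form on the generalized eigenspace at $\pm 1$, use the positive-definiteness $JP^{-1}\dot P|_{s=0}>0$ to compute the leading order of the eigenvalue bifurcation, and show that either all such eigenvalues are pushed off the unit circle for small $s>0$ (combining with any pre-existing hyperbolic eigenvalues to give (ii)) or at least one pair splits into the open upper arc as an elliptic pair, which by step two gives (i).

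The main obstacle is this last step: one must rule out the mixed outcome in which some perturbed eigenvalues remain pinned at $\pm 1$ or oscillate along the circle. This is controlled by showing that positive-definiteness of the Krein form $JP^{-1}\dot P$ on the generalized $\pm 1$-eigenspace is incompatible with any non-transverse bifurcation, so the splitting direction is definite, and the dichotomy survives into the degenerate stratum.
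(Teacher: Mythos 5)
Your proposal takes a genuinely different route from the paper's proof, and while your first two steps are sound as far as they go, your third step is only an outline and it is exactly the crux of the lemma. The paper sidesteps eigenvalue bifurcation entirely by working with Bott's intersection-theoretic description of the average index:
\[
\alpha_{\gamma}=\frac{1}{2\pi}\int \Lambda(e^{i\theta})\,d\theta,
\]
where $\Lambda(z)$ is the intersection number of the $+$-curve $\lambda\mapsto X_{\lambda}(\ell)$ with the cycle $B_{z}=\{X:\det(X-z\widehat{Q}^{\ast})=0\}$. The decisive algebraic fact is that \emph{every} intersection of a $+$-curve with $B_{z}$ is transverse and positively oriented, regardless of eigenvalue multiplicity or position (including at $\pm 1$ and at Krein collisions). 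Since the Bott curve for $g^{s}$ is homotopic rel endpoints to the Bott curve for $g^{0}$ followed by the $+$-curve $\tau\mapsto\widehat{Q}^{\ast}P(\tau)$, this single fact gives that $\Lambda_{s}(z)$ is nondecreasing in $s$ and strictly increases on an open set of $z$ whenever some $P(\tau)$ with $\tau\in(s_{0},s_{1})$ has an eigenvalue on the circle. The dichotomy then drops out of a short topological argument: the set $H$ of $s$ for which $P(s)$ has no eigenvalue on the circle is open, $\alpha_{s}\in\mathbb{Z}$ on $H$, so $H$ is a finite union of intervals; either $(0,\varepsilon)\subset H$ for some $\varepsilon$, giving (ii), or unit-circle eigenvalues occur for $s$ arbitrarily close to $0$, forcing strict increase and giving (i).

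Your approach instead tracks individual eigenvalues via the Krein form. Step one (Krein monotonicity $\sigma\dot{\theta}>0$) requires simplicity of the eigenvalue, and so does step two, so both already falter at a Krein collision at $s=0$, which is part of the degenerate stratum you defer. The real gap is step three: the normal-form analysis near $\pm 1$ is stated as a plan rather than executed. To make it a proof you would need the full classification of basic symplectic normal forms at $\pm 1$, a sign computation in each case against positivity of $JP^{-1}\dot{P}$ on the corresponding root space, and a patching argument showing that the integer term $C(s)$ and the elliptic sum $\frac{1}{\pi}\sum\sigma_{j}\theta_{j}(s)$ have compensating jumps whenever eigenvalues leave the circle (note that $C(s)$ records the homotopy class of the whole Bott path, not merely a count of non-elliptic eigenvalues, so this is not automatic). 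All of this is precisely what Bott's transversality-of-$+$-curves fact packages automatically. As written, the proposal is a plausible program, but it has a genuine hole at its core case; the paper's formulation via intersection numbers renders that stratum invisible.
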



\begin{proof}
This lemma is a consequence of Bott's results. \ Let \ $\gamma $ be a closed
geodesic of length $\ell $ in the metric $g$, and, for $\lambda \in \mathbb{R%
}$ let \ $X_{\lambda }(t)\in \mathfrak{S}$ be the solution of the eigenvalue
equation 
\begin{equation*}
\frac{\partial X_{\lambda }(t)}{\partial t}=A_{\lambda }(t)X_{\lambda }(t)
\end{equation*}%
with $X_{\lambda }(0)=I$ \ for all $\lambda $, \ where $A_{\lambda }$ is the
matrix 
\begin{equation*}
A_{\lambda }=\left( 
\begin{array}{cc}
0 & I \\ 
-(R+\lambda I) & 0%
\end{array}%
\right)
\end{equation*}%
\ with $R$ giving the sectional curvature along $\gamma $ as above. \ Bott
proved that the average index $\alpha _{\gamma }$ of $\gamma $ is given by 
\begin{equation*}
\alpha _{\gamma }=\frac{1}{2\pi }{\textstyle\int }\Lambda (e^{i\theta
})d\theta
\end{equation*}%
where $\Lambda :\{z:|z|=1\}\rightarrow \mathbb{Z}$ is the intersection
number of the +-curve 
\begin{equation*}
\lambda \rightarrow X_{\lambda }(\ell )\text{; }-\infty <\lambda <0
\end{equation*}%
with the cycle 
\begin{equation*}
B_{z}:\det (X-z\widehat{Q}^{\ast })=0\,.
\end{equation*}%
Note that the multiplicity of $z$ as an eigenvalue of 
\begin{equation*}
P=\widehat{Q}X,
\end{equation*}%
is the same as the nullity of $(X-z\widehat{Q}^{\ast })$. It seems to the
authors that Bott ignored the fact that $Q$ may be nontrivial; however his
theorem is general enough to accommodate this case also.

Bott showed that the intersection number of curves with $B_{z}$ is well
defined; in particular curves whose endpoints are not on $B_{z}$, but that
are homotopic with endpoints fixed will have the same intersection number
with $B_{z}.$  Every intersection of $B_{z}$ with a +-curve is
transverse and carries the same orientation; the intersection number of a
+-curve with $B_{z}$ at a point $X$ is given by the nullity of $X-z\widehat{Q%
}^{\ast}$.

Given $(\{g_{s}\},\gamma)$, $0\leq s\leq\delta$ as above with $P^{-1}\frac {%
dP}{ds}|_{s=0}$ in the +-cone, we can assume that $P(s)$ is a +-curve for $%
s\in\lbrack0,\delta]$.\ \ For each $s$, the curve 
\begin{equation*}
\lambda\rightarrow X_{\lambda,s}(\ell)\text{; }-\infty<\lambda<0
\end{equation*}
is easily seen to be homotopic to the union of 
\begin{equation*}
\lambda\rightarrow X_{\lambda,0}(\ell)\text{; }-\infty<\lambda\leq0
\end{equation*}
with the path 
\begin{equation}  \label{eq:prozent}
\tau\rightarrow\widehat{Q}^{\ast}P(\tau),0<\tau<s\,.
\end{equation}
Let $\alpha_{s}$ be the average index of $\gamma_{s}$. \ Since the path
given in Equation~\ref{eq:prozent} is a +-curve, the intersection number $%
\Lambda_{s}(z)$ is nondecreasing in $s$ at each $z$, and $\alpha_{s}$ is
nondecreasing in $s$.

\smallskip Now let $s_{0}$, $s_{1}\in\lbrack0,\delta]$ with $s_{0}<s_{1}$. \
Assume\ the path 
\begin{equation*}
\tau\rightarrow\widehat{Q}^{\ast}P(\tau),s_{0}<\tau<s_{1}
\end{equation*}
intersects $B_{z}$ for some $z$ on the unit circle (in other words that $z$
is an eigenvalue of $P(\tau)$ for some $\tau\in(s_{0},s_{1})$).  It
follows that $\Lambda_{s_{1}}(z)$ is strictly greater than the sum of $%
\Lambda_{s_{0}}(z)$ and the nullity of $X_{0,s_{0}}-z\widehat{Q}^{\ast}$,
and thus that the same is true in some neighborhood of $z$. \ Thus $\Lambda
_{s_{1}}$ is strictly greater than $\Lambda_{s_{0}}$ on some open set
containing $z$, and $\alpha_{s_{1}}>\alpha_{s_{0}}$. \ Note that the set $%
H\subset\lbrack0,\delta]$ of $s$ for which $P(s)$ has no eigenvalue on the
unit circle is open, and that if $s\in H$, then $\alpha_{s}\in\mathbb{Z}$.
It follows that $H$ is a union of a finite number of intervals. If $0$ is
the endpoint of one of these intervals, we have (i); if not, (ii) holds.
\end{proof}

\begin{proof}
(of Theorem~\ref{theorem:perturbation})

Let a metric $g^{0}$, a geodesic $\gamma$ of length $\ell>0$ in the metric $%
g^{0}$ , and a neighborhood $U$ of $\gamma(0)$ be given. By Lemma~\ref%
{lem:eins} and Lemma~\ref{lem:zwei}, we can find $\beta>0$ and a
one-parameter smooth family $g^{s},s\in\lbrack0,\beta)$ of
perturbations of $g^{0}$, supported in $U$, and so that

\begin{enumerate}
\item $\gamma$ is a closed geodesic of $g^{s}$ of length $\ell$.

\item Parallel transport along $\gamma$ is unchanged.

\item The path of Poincar\'{e} maps $\{P(s):s$ $\in \lbrack0,\beta)\}$
is a $+$-curve.
\end{enumerate}

By Lemma~\ref{lem:drei}, (possibly after picking a new but still positive $%
\beta$) either

\begin{itemize}
\item[(i)] the average index is increasing for $s\in\lbrack0,\beta)$, or

\item[(ii)] $P(s)$ has no eigenvalue on the unit circle, for $%
s\in (0,\beta)\,.$
\end{itemize}

In the first case we are done. In the second case we apply Lemma~\ref%
{lem:drei3} to the geodesic $\gamma$ in the metric $g^{s,0}=:g^{s}$ , to get
a family $g^{s,t}$ of perturbations supported in $U$, with $%
\gamma^{s,t}=:\gamma$ geodesic of length $\ell\mp t$ in the $g^{s,t}$ metric
(Note the construction of Lemma~\ref{lem:zwei} is continuous in $s$). The average
index is an integer and thus locally constant in a neighborhood of any point 
$\left(\overline{g},\overline{\gamma }\right)\in \mathcal{G}_{1}$ where the Poincar%
\'{e} map of $\overline{\gamma}$ has no eigenvalue on the unit circle. The
theorem follows.
\end{proof}

\section{The Ellipsoid theorem}
\label{sec:ellipsoid}
Using standard comparison arguments one obtains the following estimates for 
the {\em mean frequency} $\malpha_\cg$ of a closed geodesic:
\begin{lemma} 
\label{lem:estimate}
{\rm (cf.~\cite[Rem.4.3]{Rad95},~\cite[Lem.3]{Rad04},~\cite[Lem.1]{Rad07})}
Let $\cg:S^1 \rightarrow M$ be a closed geodesic 
on an $n$-dimensional manifold $M$ with a Finsler metric
with flag curvature $K=K(\cg').$
\begin{itemize}
\item[(a)] If $K(\cg')\ge \delta^2$ for some $\delta>0$
then $\malpha_{\cg} \ge \frac{ (n-1)\delta}{\pi}.$
\item[(b)] If $K(\cg')\le \Delta^2$ for some $\Delta>0$ 
then $\malpha_{\cg} \le \frac{ (n-1)\Delta}{\pi}.$
\end{itemize}
\end{lemma}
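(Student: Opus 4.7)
The plan is to reduce to the Riemannian case using the osculating metric,
apply Rauch/Sturm comparison perpendicular to $\gamma'$ to count conjugate
points along iterates $\gamma^m$, and then pass to the limit in the definition
of the average index.

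First I would use Remark~\ref{rem:finsler}: choose a nowhere vanishing extension
$V$ of $\gamma'$ to a neighborhood of $\gamma$ and pass to the osculating
Riemannian metric $g^V$. Along $\gamma$ the Jacobi equation, parallel transport,
the index form, and the sectional curvature of any plane $\sigma$ containing
$\gamma'$ coincide with the Finsler data, and so in particular
$\operatorname{ind}(\gamma^m)$ and $\malpha_\gamma$ are the same in either
setting. It therefore suffices to prove the two inequalities under the
Riemannian assumptions $\delta^2 \le K \le \Delta^2$ on sectional curvatures
of planes containing $\gamma'$.

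Next I would apply the Morse index theorem on the based path space: for any
$T>0$ the Morse index of $\gamma|_{[0,T]}$ equals the number of conjugate
points in $(0,T)$ counted with multiplicity. By Rauch's comparison theorem,
applied to the perpendicular space along $\gamma$ (which has dimension $n-1$),
if $K \ge \delta^2$ then in each of the $n-1$ independent perpendicular
directions there is a zero of the corresponding Jacobi field in every interval
of length $\pi/\delta$, giving at least
$(n-1)\lfloor T\delta/\pi\rfloor$ contributions to the index. Applying this
with $T = mL$ along $\gamma^m$ and comparing the based-path index with the
closed-geodesic index on $\Lambda M$ (which differ by at most a constant
depending only on $n$, coming from the boundary/loop correction; this is the
one bit of bookkeeping to do carefully, but it is bounded independently of $m$),
one obtains
\[
\operatorname{ind}(\gamma^m) \;\ge\; (n-1)\,\frac{mL\delta}{\pi} \;-\; C_n.
\]
Dividing by $m$ and letting $m\to\infty$ gives $\alpha_\gamma \ge (n-1)L\delta/\pi$,
and dividing by $L=\ell(\gamma)$ gives (a). The upper bound (b) is entirely
symmetric: the reverse Rauch comparison, using $K\le\Delta^2$, produces an
upper bound of the form $(n-1)\,mL\Delta/\pi + C_n'$ on $\operatorname{ind}(\gamma^m)$,
and the same limiting procedure yields $\malpha_\gamma \le (n-1)\Delta/\pi$.

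The main obstacle I anticipate is not the comparison step itself, which is
classical, but the bookkeeping in passing from the based-path Morse index
(which is what Rauch directly controls) to the free-loop index entering
$\alpha_\gamma = \lim_m \operatorname{ind}(\gamma^m)/m$. Since the discrepancy
between these two indices is bounded by a dimensional constant (essentially
because the tangent space at a closed geodesic decomposes into a fixed-endpoint
part and a finite-dimensional complement), this correction washes out in the
limit and has no effect on the final inequalities.
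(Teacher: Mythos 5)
Your proposal is correct in substance. The paper does not actually prove Lemma~\ref{lem:estimate}; it cites it from \cite{Rad95}, \cite{Rad04}, \cite{Rad07}, so there is no internal proof to compare against — but the route you describe (reduce to the osculating Riemannian metric along $\gamma$ via Remark~\ref{rem:finsler}, count conjugate points with the Morse index theorem and curvature comparison, then divide out the bounded based-path/free-loop discrepancy and take $m\to\infty$) is exactly the standard argument in the cited sources.

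One phrase deserves tightening: you say that if $K\ge\delta^2$ then \emph{in each of the $n-1$ independent perpendicular directions} there is a zero of the corresponding Jacobi field in every interval of length $\pi/\delta$. In a parallel orthonormal frame the Jacobi equation reads $f''+R(t)f=0$ with $f:[0,T]\to\mathbb{R}^{n-1}$ and $R(t)\ge\delta^2 I$; this system does not literally decouple into $n-1$ scalar Sturm equations unless $R(t)$ is diagonal in the parallel frame. The correct mechanism is the index form comparison: since $\langle R V,V\rangle \ge \delta^2|V|^2$ pointwise, the based index form satisfies
\[
I(V,V)\;=\;\int_0^T\bigl(|V'|^2-\langle R V,V\rangle\bigr)\,dt
\;\le\;\int_0^T\bigl(|V'|^2-\delta^2|V|^2\bigr)\,dt,
\]
so $\operatorname{ind}\bigl(\gamma|_{[0,T]}\bigr)$ is at least the index of the constant-curvature-$\delta^2$ model, namely $(n-1)\lfloor T\delta/\pi\rfloor$ up to an $O(1)$ endpoint correction; and symmetrically for the upper bound with $\Delta$. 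This gives the same count you claim but without asserting an unjustified decoupling. Everything else — the boundedness of the based-versus-free-loop index difference (see e.g. Klingenberg or \cite[Thm.~10.1.2]{Lo}) and the limit $\alpha_\gamma/L=\overline\alpha_\gamma$ — is fine, and the Finsler reduction via $g^V$ is the right move. (Incidentally, the paper's own proof of the sharper Lemma~\ref{lem:holonomy} \emph{does} decompose the index form into scalars, but only under an extra hypothesis that the parallel frame closes up after one period; your general argument correctly avoids relying on that.)
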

For surfaces we show in Lemma~\ref{lem:kappa}
that one can improve these inequalities
in case of non-constant flag curvature. 
On a surface we can compute the mean frequency
of the closed geodesic $\gamma:\mathbb{R}\rightarrow M$
using conjugate points;
$$\malpha_{\gamma}= \lim_{m\to \infty}
\frac{N_m(\gamma)}{m}$$
where $N_m(\gamma)$ is the number of points $\gamma(t)$ conjugate
to $\gamma(0)$ along $\gamma\left|[0,m]\right.$. If $t_k$ is the 
parameter of the 
$k$-th conjugate point $\gamma(t_k)$ to $\gamma(0)$
then $$\malpha_{\gamma}=\lim_{m\to \infty}
\frac{k}{t_k}\,.$$
\begin{lemma}
\label{lem:conj}
Let $\gamma_j:\R \rightarrow M_j^2\,,j=1,2$ be  
two geodesics parametrized by arclength
on a surface $M_j=M_j^2$ endowed with a Finsler metric $F_j,j=1,2$
with positive flag curvature $K_j(t)=K_j(\gamma_j'(t)), j=1,2.$
We denote by $t_j, j=1,2$ 
the parameter of the first conjugate point $\cg(t_j)$
of $\gamma_j(0)$ along $\gamma_j.$

If $K_1(t)\le K_2(t)$ for all $t$
then $t_1\ge t_2$ and equality only holds if
$K_1(t)=K_2(t)$ for all 
$t \in [0,t_1].$
\end{lemma}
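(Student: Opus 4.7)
The plan is to reduce the statement to the classical Sturm comparison theorem for scalar second-order linear ODEs. On a surface, the space of normal Jacobi fields along a unit-speed geodesic is one-dimensional, so each normal Jacobi field $J_j$ along $\gamma_j$ can be written $J_j(t) = y_j(t)\,E_j(t)$, where $E_j$ is a unit normal parallel field (in the Finsler setting, parallel with respect to the osculating Riemannian metric $g^{\gamma_j'}$ introduced in Remark~\ref{rem:finsler}, whose Jacobi equation and flag/sectional curvature along $\gamma_j$ agree with those of $F_j$). The Jacobi equation then reduces to the scalar equation
\[
y_j''(t) + K_j(t)\,y_j(t) = 0,
\]
and the first parameter $t_j > 0$ at which $\gamma_j(0)$ is conjugate to $\gamma_j(t_j)$ along $\gamma_j$ is precisely the first positive zero of the solution $y_j$ with initial data $y_j(0)=0$, $y_j'(0)=1$.

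Next, I would compare the two solutions via the Wronskian $W(t) := y_1(t)y_2'(t) - y_1'(t)y_2(t)$. Using both Jacobi equations one computes
\[
W'(t) = y_1(-K_2 y_2) - (-K_1 y_1) y_2 = \bigl(K_1(t) - K_2(t)\bigr)\,y_1(t)y_2(t),
\]
and $W(0)=0$, so $W(t) = \int_0^t (K_1-K_2)\,y_1 y_2\,ds$. Suppose for contradiction that $t_1 < t_2$. Then $y_1>0$ on $(0,t_1)$ and $y_2>0$ on $(0,t_1]$, so the integrand is $\le 0$ on $[0,t_1]$, giving $W(t_1)\le 0$. On the other hand $W(t_1) = -y_1'(t_1)y_2(t_1)$, with $y_1'(t_1)\le 0$ (since $y_1$ is positive just before $t_1$ and vanishes there) and $y_2(t_1)>0$, so $W(t_1)\ge 0$. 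Hence $W(t_1)=0$, which forces $y_1'(t_1)=0$; but then $y_1(t_1)=y_1'(t_1)=0$, contradicting uniqueness for the Jacobi ODE. Therefore $t_1 \ge t_2$.

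For the equality clause, suppose $t_1=t_2$. Then $y_1,y_2>0$ on $(0,t_1)$ and both vanish at $t_1$, so $W(t_1) = y_1(t_1)y_2'(t_1) - y_1'(t_1)y_2(t_1) = 0$. Combined with $W(t_1) = \int_0^{t_1}(K_1-K_2)y_1y_2\,dt$ and the pointwise inequality $(K_1-K_2)y_1y_2 \le 0$ on $[0,t_1]$, this forces $(K_1-K_2)y_1y_2 \equiv 0$ on $[0,t_1]$. Since $y_1 y_2 > 0$ on $(0,t_1)$, we conclude $K_1(t)=K_2(t)$ for all $t\in (0,t_1)$, and hence on $[0,t_1]$ by continuity.

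The main obstacle is essentially cosmetic: justifying the reduction to the scalar Jacobi equation $y''+K\,y=0$ in the Finsler category. I would handle this by invoking the osculating Riemannian metric $g^{\gamma_j'}$ along each geodesic (as in Remark~\ref{rem:finsler}), whose sectional curvature in the plane tangent to $M_j$ along $\gamma_j$ equals the flag curvature $K_j(\gamma_j')$ and whose parallel transport and Jacobi fields along $\gamma_j$ coincide with those of $F_j$; everything then reduces to the Riemannian $2$-dimensional case, where the argument above is purely one-dimensional Sturm theory.
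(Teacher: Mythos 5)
Your proof is correct, and it takes a genuinely different route from the paper's. The paper's argument is variational: it writes down the index form $I[\gamma_j,T](y,z)=\int_0^T\{y'z'-K_j yz\}\,dt$, characterizes $t_j$ as the first parameter at which the index form becomes degenerate, and then tests $I[\gamma_2,t_1]$ on the $F_1$-Jacobi field $y_1$ to get $I[\gamma_2,t_1](y_1,y_1)\le I[\gamma_1,t_1](y_1,y_1)=0$, forcing $t_2\le t_1$; equality forces $K_1\equiv K_2$ on $[0,t_1]$ because $y_1>0$ on $(0,t_1)$. You instead run the classical ODE form of Sturm comparison: reduce to the scalar Jacobi equation $y_j''+K_jy_j=0$, introduce the Wronskian $W=y_1y_2'-y_1'y_2$, compute $W'=(K_1-K_2)y_1y_2$, and derive a contradiction from the sign of $W(t_1)$ evaluated two ways. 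Both are standard proofs of the Sturm/Rauch comparison on a surface. The index-form version is the one that scales to higher dimensions (Morse index theory) and meshes with the mean-frequency calculations elsewhere in this section, which is presumably why the authors chose it; your Wronskian version is more elementary and entirely self-contained in the scalar setting, at the price of depending more visibly on the one-dimensionality of the normal bundle. Your reduction of the Finsler case to the scalar equation via the osculating Riemannian metric $g^{\gamma_j'}$ matches what Remark~\ref{rem:finsler} supplies, so that step is fine.
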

\begin{proof}
Denote by 
\beq
I\left[\gamma_j,T\right](y,z)=\int_0^T \left\{y'(t) z'(t) - K_j(t) y(t) z(t)\right\}\,dt
\eeq
the {\em index form} of the geodesic $\gamma_j:[0,T]\rightarrow M_j.$
We consider the index form on the space of smooth functions
$y,z:[0,T]\rightarrow \R$ with
$y(0)=z(0)=y(T)=z(T)=0.$ Assume that 
$K_1(t)\le K_2(t)$ for all $t.$

Then $t_j$ is the largest positive number $T_j$ such that
the index form $I\left[\gamma_j,T_j\right]$ is positive definite 
for $T<T_j$
and degenerate for $T=T_j.$
Choose the Jacobi field $y_1$ of the Finsler metric $F_1$ with
$y_1(0)=y_1(t_1)=0$ and $y_1'(0)=1.$
Then
\begin{eqnarray}
 I\left[\cg_2,t_1\right](y_1,y_1)&=&
 \int_0^{t_1} \left\{y_1'(t)^2-K_2(t)\, y_1^2(t)\right\}\,dt
 \nonumber
 \\
 &\le & 
 \int_0^{t_1}\left\{y_1'(t)^2-K_1(t) \, y_1^2(t)\right\}\,dt 
 =I\left[\cg_1,t_1\right](y_1,y_1)=0
 \label{eq:index}
\end{eqnarray}
Hence $t_1\ge t_2$ follows. If $t_1=t_2$
we also have equality in Equation~\ref{eq:index} which is only possible if 
$K_1(t)=K_2(t)$ for all $t\in [0,t_1]$
since $y_1(t)>0$ for all $t\in (0,t_1).$
\end{proof}
\begin{lemma}
\label{lem:kappa}
Let $\gamma_j: \R \rightarrow M_j=M_j^2, j=1,2$ be a closed geodesic 
parametrized by arc length on a surface
$M_j=M_j^2$ with Finsler metric $F_j, j=1,2$ and 
positive
flag curvature $K_j(t)=K_j(\gamma_j'(t)), j=1,2.$

Let $\ol{\alpha}_j;j=1,2$ be the {\em mean frequency} of
$\gamma_j$ with respect to the Finsler metric $F_j.$ 

If $K_1 (t)\le K_2(t)$ for all $t\in \R$ 
and if $\{t \in \R;K_1(t)=K_2(t)\}\subset \R$ is discrete then
$\ol{\alpha}_1< \ol{\alpha}_2.$ 
\end{lemma}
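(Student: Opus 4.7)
The plan is to reduce the lemma to a strict monotonicity statement for the rotation number of a scalar Hill-type equation. On a surface, the Jacobi equation along a geodesic parametrized by arc length is the scalar ODE $J_j'' + K_j(t) J_j = 0$, so the solution $J_j$ with $J_j(0)=0$, $J_j'(0)=1$ has its positive zeros precisely at the parameters $t_k^j$ of the conjugate points of $\gamma_j(0)$. Together with the formula $\overline{\alpha}_\gamma = \lim_{k\to\infty} k/t_k$ recalled just before Lemma~\ref{lem:conj}, this identifies $\overline{\alpha}_j$ with the asymptotic density of the zeros of $J_j$.

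My first move would be to pass to the Pr\"ufer angle $\theta_j(t)$, defined by $J_j(t) = r_j(t)\sin\theta_j(t)$, $J_j'(t) = r_j(t)\cos\theta_j(t)$, $\theta_j(0)=0$. A direct calculation gives the first-order scalar ODE
\begin{equation*}
\theta_j'(t) = \cos^2\theta_j(t) + K_j(t)\sin^2\theta_j(t),
\end{equation*}
whose right-hand side is strictly positive since $K_j>0$. The zeros of $J_j$ are exactly the parameters at which $\theta_j$ crosses a multiple of $\pi$, so $\theta_j(t_k^j)=k\pi$, yielding
\begin{equation*}
\overline{\alpha}_j = \frac{1}{\pi}\lim_{T\to\infty}\frac{\theta_j(T)}{T}.
\end{equation*}
The right-hand side of the Pr\"ufer ODE is pointwise non-decreasing in $K$, so standard scalar-ODE comparison gives $\theta_1(t)\le\theta_2(t)$ for all $t\ge 0$, and hence the weak inequality $\overline{\alpha}_1\le\overline{\alpha}_2$.

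The substance of the lemma is upgrading this to a strict inequality. Since each $K_j$ is continuous and periodic, the Pr\"ufer ODE descends to a time-periodic flow on the circle $\mathbb{R}/\pi\mathbb{Z}$, and $\pi\overline{\alpha}_j$ is precisely the rotation number of this flow. Subtracting the two Pr\"ufer equations and integrating produces the identity
\begin{equation*}
\theta_2(T)-\theta_1(T) = \int_0^T (K_2-K_1)\sin^2\theta_1\,dt + \int_0^T (K_2-1)(\sin^2\theta_2-\sin^2\theta_1)\,dt.
\end{equation*}
Under the standing hypothesis, $K_2-K_1$ is continuous, nonnegative, and strictly positive off a discrete set, so its integral over any fundamental period is strictly positive. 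A Birkhoff-type averaging argument for the circle flow, applied over a period of $K_2$, then shows that the first integral grows at least linearly in $T$, while the second integral --- whose integrand is phase-oscillatory with mean zero along the rotational dynamics --- stays bounded in $T$. Dividing by $T$ and passing to the limit yields $\overline{\alpha}_1<\overline{\alpha}_2$.

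The main obstacle is exactly the strict monotonicity of the rotation number: because the Pr\"ufer ODE is nonlinear in $\theta$, an instantaneous gain from $K_2-K_1>0$ at one phase can be offset at other phases, and converting the pointwise strict inequality into a definite linear-in-$T$ gain requires a careful ergodic argument over a full period. An alternative route that avoids circle-flow ergodic theory is to iterate Lemma~\ref{lem:conj} with shifted basepoints $s\in\mathbb{R}$, exploiting the two-dimensional identity $t_k^j - t_{k-1}^j = \mu_j(t_{k-1}^j)$ (where $\mu_j(s)$ is the first-conjugate distance from $\gamma_j(s)$, which is $L_j$-periodic in $s$) together with Birkhoff averaging for the iteration $s\mapsto s+\mu_j(s)$ over a fundamental domain.
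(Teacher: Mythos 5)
Your Pr\"ufer-angle route is genuinely different from the paper's proof. The paper never introduces the angle variable: it works directly with the first-conjugate distance $s\mapsto t_1^{(j)}(s)$, applies Lemma~\ref{lem:conj} at every basepoint $s$ to get the pointwise strict inequality $t_1^{(1)}(s)>t_1^{(2)}(s)$, uses continuity and periodicity of $t_1^{(j)}$ to promote this to a uniform ratio $t_1^{(1)}(s)\geq\rho^{-1}t_1^{(2)}(s)$ for some $\rho<1$, then iterates to $t_k^{(1)}(s)\geq\rho^{-1}t_k^{(2)}(s)$ and reads off $\overline{\alpha}_1\le\rho\,\overline{\alpha}_2<\overline{\alpha}_2$. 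That is exactly the second ``alternative route'' you sketch in your last paragraph, so you had the paper's idea on the table and chose to pursue the other one.

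The Pr\"ufer argument, however, has a genuine gap at the only step that matters. The derivation of $\theta'=\cos^2\theta+K\sin^2\theta$, the weak comparison $\theta_1\le\theta_2$, and the algebraic identity are all correct, but the assertion that
$\int_0^T(K_2-1)(\sin^2\theta_2-\sin^2\theta_1)\,dt$
stays bounded in $T$ is unjustified and is precisely where the difficulty lives. Writing the integrand as $(K_2-1)\sin(\theta_2-\theta_1)\sin(\theta_1+\theta_2)$, there is no sign and no a priori mean-zero structure; Birkhoff averaging over a period shows your \emph{first} integral has positive Ces\`aro average but says nothing about the second. In fact the rotation-number-locking phenomenon you flag at the very end is exactly the obstruction: if the monodromy of $y''+K_1 y=0$ is hyperbolic (parametric resonance, which can occur even with $K_1>0$), then $\overline{\alpha}_1 L_1$ is an integer and is locally constant under small $C^0$-perturbations of $K_1$, so for $K_2=K_1+\varepsilon$ with $\varepsilon>0$ small one has $\theta_2(T)-\theta_1(T)$ bounded; since the first integral in your identity still grows linearly, the second integral must then grow linearly and negatively, contradicting your claim. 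Thus the ``ergodic argument over a full period'' you invoke would have to rule out locking altogether, and you do not supply such an argument. The paper sidesteps this by never touching the rotation number directly and instead iterating the \emph{ratio} bound on first-conjugate distances, which is clean in particular when one of the $K_j$ is constant (as it is in every place the lemma is used); you should develop the iterated-Sturm alternative you already proposed rather than try to repair the phase-averaging step.
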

\begin{proof}
Let $L_j,j=1,2$ be the length of the closed geodesic $\gamma_j,$ 
of the metric $F_j,$
i.e.
$\gamma_j(t+L_j)=\cg(t)$ for all $t.$
For any $s \in [0,L_j]$  denote by $s+t_1^{(j)}(s)$ the parameter of
the first conjugate
point $\gamma_j\left(s+t_1^{(j)}(s)\right)$ of $\gamma_j(s).$
We conclude from Lemma~\ref{lem:conj}:
$t_1^{(1)}(s) > t_1^{(2)}(s)$ for all $s \in [0,L_j].$
Since the function $s \in \R\mapsto t_1^{(j)}(s)\in (0,\infty)$
is continuous and periodic there is an $\rho<1$
such that $t_1^{(1)}(s)\ge \rho^{-1} \, t_1^{(2)}(s) $ for all $s \in \R.$
Let $t_k^{(j)}(s)$ be the $k$-th conjugate point of $\gamma_j(s)$
then we conclude 
from Lemma~\ref{lem:conj}
$t_k^{(1)}(s)\ge \rho^{-1}t_k^{(2)}(s)$ and
$$\ol{\alpha}_1=\lim_{k\to \infty}\frac{k}{t_k^{(1)}(s)}
\le \rho \lim_{k\to \infty}\frac{k}{t_k^{(2)}(s)}
\le \rho \ol{\alpha}_2.$$
Since $\rho <1$ the claim follows. 
\end{proof}
\begin{lemma}
\label{lem:holonomy}
If $\cg: \mathbb{R}\rightarrow M$ is a closed geodesic on a Finsler manifold
$(M,g)$ with $\cg(t+1)=\cg(t)$ 
and $\left(e_1(t),e_2(t),\ldots,e_n(t)\right)$ is a parallel field of orthonormal
basis along $\cg$ with $e_1(t)=\cg'(t)$ and $e_i(t+1)=e_i(t)$ for all $t$
and if the sectional curvatures satisfy
$$ 0< \delta_i^2 \le K\left(\cg'(t),e_i(t)\right) \le \Delta_i^2$$
with equality only at a discrete set of parameters $t,$
then the mean frequency satisfies
$$ \frac{1}{\pi}\sum_{i=2}^n \delta_i <
 \ol{\alpha}_{\cg} <\frac{1}{\pi}\sum_{i=2}^n \Delta_i\,.$$
\end{lemma}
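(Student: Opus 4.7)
My plan is to use the Morse-theoretic formula $\overline{\alpha}_\gamma = \lim_{m\to\infty}\mathrm{ind}(\gamma^m)/(mL)$ with $L = L(\gamma)$, and to reduce to $n-1$ scalar Sturm-Liouville problems by projecting the Jacobi equation along $\gamma$ onto the parallel frame. Writing a normal Jacobi field as $Y(t) = \sum_{i=2}^n y_i(t)\,e_i(t)$, parallelism of the frame turns the Jacobi equation $Y'' + R(\gamma', Y)\gamma' = 0$ into the matrix equation $Y'' + K(t) Y = 0$, where $K(t)$ is the symmetric $(n-1)\times(n-1)$ matrix with entries $K_{ij}(t) = \langle R(\gamma', e_i)\gamma', e_j\rangle$; its diagonal entries are the sectional curvatures $K_i(t) := K(\gamma'(t), e_i(t))$, bounded by hypothesis between $\delta_i^2$ and $\Delta_i^2$. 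Periodicity $e_i(t+1) = e_i(t)$ ensures that $\mathrm{ind}(\gamma^m)$ equals the number of conjugate points of this matrix equation on $(0, mL)$, counted with multiplicity.

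For the strict lower bound I would build a large negative subspace of the index form $I_{mL}$ out of single-direction test fields $Y = y_i e_i$. On such fields the index form collapses to the scalar form $\int_0^{mL}\bigl((y_i')^2 - K_i(t) y_i^2\bigr)\,dt$, because all cross terms $\int K_{ij} y_i y_j\,dt$ vanish identically in a single direction. Sturm-Picone comparison with the constant-coefficient equation $y'' + \delta_i^2 y = 0$, combined with the strict-monotonicity argument underlying Lemma~\ref{lem:kappa} that exploits the discreteness of $\{t : K_i(t) = \delta_i^2\}$, forces the asymptotic density of zeros of $y_i'' + K_i(t) y_i = 0$ to strictly exceed $\delta_i/\pi$. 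Combining these direction-by-direction contributions yields $\liminf_{m\to\infty}\mathrm{ind}(\gamma^m)/(mL) > (1/\pi)\sum_{i=2}^n \delta_i$. The upper bound is proved dually, via matrix Sturm-Picone comparison of $Y'' + K(t) Y = 0$ against the decoupled reference system $Y'' + \mathrm{diag}(\Delta_i^2) Y = 0$, whose mean frequency is exactly $(1/\pi)\sum_{i=2}^n \Delta_i$, with strictness supplied by the discreteness of $\{t : K_i(t) = \Delta_i^2\}$.

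The principal obstacle is the off-diagonal structure of $K(t)$: the eigenvalues of $K(t)$ generally differ from its diagonal entries $K_i(t)$, so a naive componentwise Sturm comparison does not automatically yield $\sum_i \sqrt{K_i(t)}/\pi$ as the local density of conjugate points. The lower bound sidesteps this by confining test fields to a single direction, so that the cross terms integrate to zero; assembling the contributions from all $n-1$ directions into one negative subspace is cleanest in settings where $K_{ij} \equiv 0$ (so the curvature operator diagonalizes in the parallel frame, as happens for the short closed geodesics on an ellipsoid through ambient symmetry). The upper bound requires matrix Sturm-Picone together with monotonicity of the Lagrangian path in $\mathrm{Sp}(2(n-1),\mathbb{R})$, tracking how off-diagonal entries of $K(t)$ shift the spectrum of $K(t)$ while remaining strictly dominated by $\mathrm{diag}(\Delta_i^2)$ on a dense open set.
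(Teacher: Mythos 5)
Your proposal follows essentially the same route as the paper's very short proof: decompose the index form in the parallel frame into $n-1$ scalar Sturm forms and apply the surface comparison (Lemma~\ref{lem:kappa}). The paper simply asserts that ``the index form splits as a sum of $(n-1)$ forms''; you are more honest in flagging that this decoupling requires the curvature matrix $K_{ij}(t)=\langle R(\gamma',e_i)\gamma',e_j\rangle$ to be diagonal in the parallel frame. This is a genuine hidden hypothesis: without $K_{ij}\equiv 0$ for $i\neq j$, the one-dimensional subspaces $\{y_ie_i\}$ are not mutually orthogonal for the index form, and the stated lower bound actually fails. Take $n=3$ and $K(t)=(1+\epsilon\sin 2\pi t)\,K_0$, where $K_0$ is the $2\times 2$ symmetric matrix with diagonal entries $1$ and off-diagonal entry $c\in(0,1)$, and $\epsilon$ small. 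The diagonal entries of $K(t)$ lie in $[1-\epsilon,1+\epsilon]$, so $\delta_i^2=1-\epsilon$; but $K_0$ has eigenvalues $1\pm c$, the system decouples in the eigenbasis of $K_0$, and the mean frequency is close to $(\sqrt{1+c}+\sqrt{1-c})/\pi<2/\pi$, which for small $\epsilon$ drops below $(\delta_2+\delta_3)/\pi=2\sqrt{1-\epsilon}/\pi$. The diagonality does hold for the short closed geodesics on the ellipsoid, via the reflection symmetries and the totally geodesic sections $V_{i,j}\cap E$, which is where the lemma is used.

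Your upper-bound discussion, however, contains an error that the lower-bound discussion avoids. You claim the off-diagonal entries leave $K(t)$ ``strictly dominated by $\mathrm{diag}(\Delta_i^2)$'' on a dense open set, so that matrix Sturm--Picone applies directly. But bounding the diagonal entries $K_{ii}(t)\le\Delta_i^2$ says nothing about the off-diagonal entries, and does not give $K(t)\le\mathrm{diag}(\Delta_i^2)$ in the L\"owner order; the matrix comparison you invoke simply does not follow from the hypotheses. Once you assume $K_{ij}\equiv 0$ (as you must for the lower bound anyway), both bounds reduce at once to the scalar comparison of Lemma~\ref{lem:kappa}, which is exactly what the paper does. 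You should state the diagonality hypothesis up front and use it for both bounds, rather than trying to patch the general case with a matrix comparison that is unavailable.
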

\begin{proof}
With respect to the parallel orthonormal basis field 
the index form splits as a sum of $(n-1)$ forms, which coincide with the
 the index form of a closed geodesic $\gamma_i(t), i=2,\ldots,n$
on a surface
with Gau{\ss} curvature $K(\gamma'(t),e_i(t)),i=2,\ldots,n$ 
along $\gamma_i.$
Therefore the mean frequency of $\gamma$ equals the sum of the
mean frequencies of the closed geodesics $\gamma_i , i=2,\ldots,n.$
Then the statement follows from Lemma~\ref{lem:estimate}.
\end{proof}

On the ellipsoid $$M=M(a_0,a_1,a_2):=\left\{(x_0,x_1,x_2)\in \R^3\,;\,
\frac{x_0^2}{a_0^2}+\frac{x_1^2}{a_1^2}+\frac{x_2^2}{a_2^2}=1\right\}$$
in $\R^3$ with three distinct principal
axis the intersections with the coordinate planes are 
ellipses which are up to parametrization simple
closed geodesics $\gamma_1,\gamma_2,\gamma_3.$ 
If $a_0=a_1<a_2$ then $\gamma_1$ is a circle and the ellipsoid
is invariant under rotation around the $x_2$-axis. Hence there is
a one-dimensional family of simple closed geodesics. In particular
the mean frequencies $\malpha_2$ and $\malpha_3$ coincide.
If $a_0<a_1=a_2$ then $\gamma_3$ is a circle and the ellipsoid is
invariant under rotation around the $x_0$-axis. In this case
there is one-dimensional family of simple closed geodesics,
in particular $\malpha_1$ and $\malpha_2$ coincide.

With the help of the preceding Lemmata we can show:
\begin{proposition}
Let $M=M(a_0,a_1,a_2)\subset \R^3$ 
be the $2$-dimensional ellipsoid 
with $0<a_0 < a_1 < a_2$ endowed with the induced Riemannian
metric.
Denote by $\gamma_{i},i=1,2,3$ the simple closed geodesic resp. the
ellipse which parametrizes the intersection of the ellipsoid with
the $(x_0,x_1), (x_0,x_2),(x_1,x_2)$-coordinate plane.
Then we obtain for the Gau{\ss} curvature
$K\left(\gamma_i(t)\right),i=1,2$ along $\gamma_i(t):$
\begin{equation*}
\frac{a_0^2}{a_1^2 a_2^2}
\le K\left(\gamma_1(t)\right)
\le \frac{a_1^2}{a_0^2 a_2^2}
\le K\left(\gamma_3(t)\right)
\le \frac{a_2^2}{a_0^2 a_1^2}
\end{equation*}
and 
\begin{equation*}
\frac{a_0^2}{a_1^2 a_2^2}
\label{eq:seccurv1}
\le K\left(\gamma_2(t)\right)
\le \frac{a_2^2}{a_0^2 a_1^2}\,.
\end{equation*}
Therefore we obtain for 
the mean frequencies
$\ol{\alpha}_{i}=\malpha_{\gamma_{i}},i=1,2,3\,:$
$$ 
\frac{a_0}{\pi a_1 a_2}
<\ol{\alpha}_1<
\frac{a_1}{\pi a_0 a_2}
<\ol{\alpha}_3<
\frac{a_2}{\pi a_0 a_1}
$$
and
$$
\frac{a_0}{\pi a_1 a_2}
<\ol{\alpha}_2<
\frac{a_2}{\pi a_0 a_1}\,.
$$
If $0<a_0=a_1<a_2$ then $\gamma_1$ is a circle and the
ellipsoid is invariant under rotation around the $x_2$-axis. Hence
there is a one-parameter family of closed geodesics generated
by rotation of $\gamma_2$ resp. $\gamma_3.$ In particular for
all these geodesics the mean frequency coincides and:
\begin{equation}
\label{eq:equalaxis1}
\frac{1}{\pi a_2}
=\ol{\alpha}_1<\malpha_2=\malpha_3<
\frac{a_2}{\pi a_0^2}\,.
\end{equation}
If $0<a_0<a_1=a_2$ then $\gamma_3$ is a circle and the
ellipsoid is invariant under rotation around the $x_0$-axis.
Hence by rotation of $\gamma_0$ we obtain a one-parameter
family of closed geodesics and:
\begin{equation}
\label{eq:equalaxis2}
\frac{a_0}{\pi a_1^2}
<\malpha_1=\malpha_2
<\malpha_3=
\frac{1}{\pi a_0}\,.
\end{equation}
\end{proposition}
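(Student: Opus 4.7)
The plan is twofold: first derive the stated curvature bounds along each of the three coordinate ellipses by explicit calculation, and then transfer them to strict bounds on the mean frequencies through the comparison principle of Lemma~\ref{lem:kappa}.

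For the curvature estimates I would use the classical formula
\[
K(p) \;=\; \frac{h(p)^{4}}{a_{0}^{2}a_{1}^{2}a_{2}^{2}}
\]
for the Gau{\ss} curvature of the ellipsoid at a point $p=(x_{0},x_{1},x_{2}) \in M$, where $h(p)$ is the distance from the origin to the tangent plane at $p$, so $h(p)^{-2}=x_{0}^{2}/a_{0}^{4}+x_{1}^{2}/a_{1}^{4}+x_{2}^{2}/a_{2}^{4}$. Parametrizing $\gamma_{1}$ as $(a_{0}\cos\theta,a_{1}\sin\theta,0)$, the function $h(\theta)^{-2}=\cos^{2}\theta/a_{0}^{2}+\sin^{2}\theta/a_{1}^{2}$ decreases monotonically from $1/a_{0}^{2}$ to $1/a_{1}^{2}$ as $\theta$ runs over a quarter period, so $h(\theta) \in [a_{0},a_{1}]$, which yields $K(\gamma_{1}(t)) \in [a_{0}^{2}/(a_{1}^{2}a_{2}^{2}),\,a_{1}^{2}/(a_{0}^{2}a_{2}^{2})]$. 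The identical computation applied to $\gamma_{3}$ (setting $x_{0}=0$) and $\gamma_{2}$ (setting $x_{1}=0$) produces the remaining curvature inequalities of the proposition. Crucially, each extremum is attained only at the two vertices of the ellipse, so $K(\gamma_{i}(t))$ strictly separates from its bounds on an open dense subset of parameters.

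For the mean frequency bounds I would compare each $\gamma_{i}$ with a great circle on an auxiliary round sphere whose constant Gau{\ss} curvature equals the corresponding lower or upper bound of $K(\gamma_{i})$. A great circle on a sphere of curvature $\delta^{2}$ has mean frequency $\delta/\pi$, which is the equality case of Lemma~\ref{lem:estimate} in dimension~$2$. Because the previous paragraph shows that $K(\gamma_{i}(t))$ agrees with either extremal value only on a discrete set of parameters, Lemma~\ref{lem:kappa} applies in both directions and produces the strict two-sided estimate $\delta/\pi < \overline{\alpha}_{i} < \Delta/\pi$; substituting the explicit values of $\delta$ and $\Delta$ for each $i \in \{1,2,3\}$ recovers the displayed intervals for $\overline{\alpha}_{1},\overline{\alpha}_{2},\overline{\alpha}_{3}$.

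The degenerate configurations $a_{0}=a_{1}<a_{2}$ and $a_{0}<a_{1}=a_{2}$ I would handle by direct computation. In each case the coordinate ellipse lying in the plane of the two equal semi-axes is a round circle along which $h$ is constantly equal to that common semi-axis, so $K$ is constant along it and Lemma~\ref{lem:estimate} pins down the mean frequency exactly to $1/(\pi a_{2})$ or $1/(\pi a_{0})$ respectively. The rotational $SO(2)$-symmetry of such an ellipsoid places the remaining two coordinate ellipses in a single orbit of closed geodesics, so they share a common mean frequency, and Lemma~\ref{lem:kappa} compared against round spheres of the appropriate curvature gives the pair of strict inequalities labelled \eqref{eq:equalaxis1} and \eqref{eq:equalaxis2}. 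The main delicate point throughout is verifying the discreteness hypothesis needed for strict inequality in Lemma~\ref{lem:kappa}, and this reduces to the elementary observation that the trigonometric function $\cos^{2}\theta/a^{2}+\sin^{2}\theta/b^{2}$ is nonconstant as soon as $a\neq b$.
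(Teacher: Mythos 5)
Your proof is correct and follows essentially the same strategy as the paper's: compute the Gau{\ss} curvature along each coordinate ellipse, bound it, and then invoke Lemma~\ref{lem:estimate} and the strict comparison Lemma~\ref{lem:kappa} to pass from curvature bounds to strict mean-frequency bounds, handling the degenerate rotationally symmetric cases separately. The only difference is cosmetic: you derive the curvature from the support-function identity $K(p)=h(p)^{4}/(a_{0}^{2}a_{1}^{2}a_{2}^{2})$ with $h(p)^{-2}=\sum x_{i}^{2}/a_{i}^{4}$, whereas the paper uses the explicit $(u,v)$ parametrization and Gray's formula; the two give the same expression $K(\gamma_{1}(t))=a_{0}^{2}a_{1}^{2}/\bigl(a_{2}^{2}(a_{1}^{2}\cos^{2}t+a_{0}^{2}\sin^{2}t)^{2}\bigr)$, and your observation that the extrema are attained only at the vertices is exactly the discreteness needed for strict inequality in Lemma~\ref{lem:kappa}.
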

\begin{proof}
The ellipsoid $M(a_0,a_1,a_2)$
can be parametrized as follows:
\begin{equation*}
 x_0=a_0\cos u \sin v\,;\,
x_1=a_1\sin u \sin v\,;\,
x_2=a_2 \cos v\,.
\end{equation*}
Then the Gau{\ss} curvature is given by
\begin{equation}
 K(u,v)=\frac{a_0^2 a_1^2 a_2^2}{\left\{a_0^2a_1^2 \cos^2 v+
a_2^2\left(a_1^2\cos^2u + a_0^2\sin^2 u\right) \sin^2v\right\}^2}\,,
\end{equation}
cf.~\cite[\S13]{Gray}.
The Gau{\ss} curvature along the ellipse 
$\gamma_{1}(t)=\left(a_0 \cos t ,a_1 \sin t, 0\right)$
is given by:
\begin{equation}
 K\left(\gamma_1(t)\right)= \frac{1}{a_2^2} \,
\frac{a_0^2 a_1^2}{\left(a_1^2\cos^2t + a_0^2\sin^2 t\right)^2}\,,
\end{equation}
hence
\begin{equation}
\label{eq:kc}
\frac{a_0^2}{a_1^2 a_2^2} \le K\left(\gamma_1(t)\right)
\le \frac{a_1^2}{a_0^2 a_2^2}\,.
\end{equation}
Then the statements follow from Lemma~\ref{lem:estimate}
and Lemma~\ref{lem:kappa}, the 
estimate for the mean frequencies $\malpha_2,\ol{\alpha}_3$
of the ellipses $\gamma_2,\gamma_3$ work
 analogously.
\end{proof}
With the help of this Lemma we can present the
\begin{proof}({\em of Theorem}~\ref{thm:ellipsoid}(a)):
For smooth functions $a_i(s), i=0,1,2, s\in [0,1]$
with $0<a_0(s)\le a_1(s)\le a_2(s),$
and $a_0(0)<a_1(0)=a_2(0)$ and $a_0(1)=a_2(1)<a_2(1)$
we obtain a smooth family of ellipsoids 

$M\left(a_0(s),a_1(s),a_2(s)\right)$
and we denote by $\malpha_i(s),i=1,2,3, s\in[0,1]$
the mean frequency of the simple closed geodesic $\gamma_i.$
Then Equation~\ref{eq:equalaxis1} and
Equation~\ref{eq:equalaxis2}
imply
$\malpha_1(0)=\malpha_2(0)<\malpha_3(0)$
and $\malpha_1(1)<\malpha_2(1)=\malpha_3(1).$
This implies that there is an non-empty open subset
$I\subset (0,1)$ such that 
$a_0(s)<a_1(s)<a_2(s)$ for all $s\in I.$
\end{proof}
Theorem~\ref{thm:ellipsoid}(b)
is a consequence of the following
\begin{proposition}
Let $n=2m,$ resp. $n=2m-1, m\ge 2,$ and choose $\mu>1.$
Let $M=E\left(\mu,\lambda\right)=
M\left(a_0,a_1,\ldots,a_n\right):=\left\{(x_0,x_1,\ldots,x_n)\in \R^n\,;\,
\sum_{i=0}^n \left(x_i/a_i\right)^2=1\right\}$
be the $n$-dimensional ellipsoid 
with 
pairwise distinct principal axis
$a_{2i}=\mu^i, a_{2i+1}=\lambda \mu^i, i\le m,$  
for some $\lambda>1$ endowed with the induced Riemannian
metric.
Denote by $\gamma_{i+1}=\gamma_{(2i,2i+1)},0\le i\le m$ the simple closed geodesic resp. the
ellipse which parametrizes the intersection of the ellipsoid with
the $\left(x_{2i},x_{2i+1}\right)$-coordinate plane.
Then for sufficiently small $\lambda>1$ the mean
frequencies $\malpha_i$ of the ellipses $\gamma_i,i=1,2,\ldots,m$
satisfy:
$$ 
\ol{\alpha}_1
<\ol{\alpha}_2<\cdots
<\ol{\alpha}_m\,.
$$
\end{proposition}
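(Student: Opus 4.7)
The plan is to exploit the degenerate limit $\lambda = 1$, where the axes collapse in pairs as $a_{2i} = a_{2i+1} = \mu^i$ and $E(\mu, 1)$ acquires rotational symmetry in each coordinate $(x_{2i}, x_{2i+1})$ plane. At that limit I expect to be able to write down each $\overline{\alpha}_{i+1}(1)$ explicitly and check a strict ordering; continuity of length and average index under smooth variations of the metric will then propagate the ordering to a right neighborhood of $\lambda = 1$.

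The first step, valid for all $\lambda \ge 1$, is to observe that the constant coordinate vectors $e_j = \partial/\partial x_j$ with $j \notin \{2i, 2i+1\}$ form a parallel orthonormal frame along $\gamma_{i+1}$ normal to $\gamma_{i+1}'$: the outward normal of $E(\mu, \lambda)$ on the $(x_{2i}, x_{2i+1})$ plane has zero $x_j$-component, so each such $e_j$ is tangent to the ellipsoid along $\gamma_{i+1}$, and as a constant vector of $\mathbb{R}^{n+1}$ its covariant derivative along $\gamma_{i+1}$ vanishes. This puts the situation into the hypothesis of Lemma~\ref{lem:holonomy}. To evaluate $K(\gamma_{i+1}', e_j)$ I will use the Gauss equation: at the point $p = (0, \ldots, a_{2i}, 0, \ldots)$ of $\gamma_{i+1}$ the shape operator of $E(\mu, \lambda) \subset \mathbb{R}^{n+1}$ is diagonal in $\{e_k : k \neq 2i\}$ with principal curvatures $\kappa_k = a_{2i}/a_k^2$, so
\begin{equation*}
K\bigl(\gamma_{i+1}'(p), e_j\bigr) \;=\; \kappa_{2i+1}\kappa_j \;=\; \frac{a_{2i}^2}{a_{2i+1}^2\, a_j^2}, \qquad j \notin \{2i, 2i+1\}.
\end{equation*}
Specializing to $\lambda = 1$ this is $1/a_j^2$, and by the rotational symmetry of $E(\mu, 1)$ the value is constant along $\gamma_{i+1}$. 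Since in this case the index form of Lemma~\ref{lem:holonomy} splits as a sum of $(n-1)$ index forms for constant-curvature $2$-dimensional geodesics, the lemma delivers the closed-form expression
\begin{equation*}
\overline{\alpha}_{i+1}(1) \;=\; \frac{1}{\pi}\sum_{j \notin \{2i, 2i+1\}}\frac{1}{a_j}.
\end{equation*}

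Subtracting consecutive values, and using $a_{2i-2} = a_{2i-1} = \mu^{i-1}$ and $a_{2i} = a_{2i+1} = \mu^i$, I obtain for $1 \le i \le m-1$
\begin{equation*}
\overline{\alpha}_{i+1}(1) - \overline{\alpha}_i(1) \;=\; \frac{1}{\pi}\left(\frac{1}{a_{2i-2}} + \frac{1}{a_{2i-1}} - \frac{1}{a_{2i}} - \frac{1}{a_{2i+1}}\right) \;=\; \frac{2(\mu-1)}{\pi\,\mu^{i}} \;>\; 0,
\end{equation*}
since $\mu > 1$. This yields the strict chain $\overline{\alpha}_1(1) < \overline{\alpha}_2(1) < \cdots < \overline{\alpha}_m(1)$ at the symmetric limit.

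Finally, the extension to $\lambda$ slightly greater than $1$ will follow from the continuity property~(i) at the beginning of Section~\ref{sec:continuity}: the map $(g,\gamma) \mapsto (\ell(\gamma), \alpha_\gamma)$ is continuous on $\mathcal{G}_1$. Since the pair $\bigl(g^{(\mu,\lambda)}, \gamma_i^{(\mu,\lambda)}\bigr)$, with $\gamma_i$ defined intrinsically as the intersection of $E(\mu, \lambda)$ with a fixed coordinate $2$-plane, depends smoothly on $\lambda$ through $\lambda = 1$, the function $\overline{\alpha}_i(\lambda)$ is continuous there, and the strict inequalities of the previous step persist on a right neighborhood of $1$. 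The main delicacy I anticipate, beyond the Gauss-equation computation, is that at $\lambda = 1$ the geodesics $\gamma_i$ with $i \ge 2$ are \emph{not} isolated: each sits in a one-parameter orbit of the rotational $SO(2)$-symmetry of $E(\mu, 1)$ in its coordinate plane. Consequently Lemma~\ref{lem:continuity}, which concerns the \emph{global} mean frequency $\overline{\alpha}_{g,G}$, cannot be invoked directly; it is the pointwise continuity of the single pair $(\ell, \alpha)$ on $\mathcal{G}_1$ --- which does not require isolated orbits --- that carries the argument through the symmetric value.
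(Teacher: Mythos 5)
Your proof is correct, but it takes a genuinely different route than the paper does. The paper stays at fixed $\lambda>1$ throughout: it observes that for $j\notin\{2i,2i+1\}$ the three-dimensional coordinate subspace $V_{i,j}$ meets the ellipsoid in a totally geodesic $2$-dimensional sub-ellipsoid (fixed-point set of a reflection isometry), reads off two-sided bounds $\lambda^{-2}a_j^{-2}\le K(\gamma_{i+1}',e_j)\le \lambda^2 a_j^{-2}$ from the explicit Gauss-curvature formula (Equation~\ref{eq:kc}), applies Lemma~\ref{lem:holonomy} to sandwich each $\overline{\alpha}_{i+1}$ in an interval whose endpoints depend on $\lambda$, and then verifies directly that the $m$ intervals are pairwise disjoint for $1<\lambda<1+(\mu-1)^2\mu^{-m-2}$. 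You instead pass to the degenerate value $\lambda=1$, where the rotational $\mathrm{SO}(2)$-symmetry in each $(x_{2i},x_{2i+1})$-plane makes the sectional curvatures $K(\gamma_{i+1}',e_j)=a_j^{-2}$ \emph{constant} along $\gamma_{i+1}$, so that the $(n-1)$ split index forms are constant-curvature and the mean frequency can be computed \emph{exactly} as $\frac{1}{\pi}\sum_{j\notin\{2i,2i+1\}}a_j^{-1}$ rather than merely bounded; the telescoping difference $\frac{2(\mu-1)}{\pi\mu^i}>0$ gives the strict ordering at $\lambda=1$, and the pointwise continuity of the average index on $\mathcal{G}_1$ (item~(i) of Section~\ref{sec:continuity}, correctly noted to require no isolatedness) propagates it to a right neighborhood of $1$. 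What your approach buys is a cleaner, more conceptual argument: you replace the paper's interval-separation bookkeeping with an exact closed-form calculation at a symmetric limit plus a soft perturbation step, and you make explicit the fact (used but unstated in the paper's appeal to Lemma~\ref{lem:holonomy}) that the constant Euclidean vectors $e_j$ give a parallel periodic orthonormal frame. What you give up is the paper's \emph{explicit} admissible range of $\lambda$ --- your continuity argument asserts existence of a suitable $\lambda>1$ without quantifying it. Both approaches share the underlying geometric content: identify the relevant sectional curvatures along $\gamma_{i+1}$ and convert them to mean frequencies via Bott's splitting of the index form.
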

\begin{proof}
We give the proof in the case $n=2m:$
If $e_0,e_1,\ldots,e_n$ is the standard basis of $\mathbb{R}^{n+1},$
let $V_{i,j}$ for $j\not\in\{2i, 2i+1\}$ be the three dimensional subspace
$V_{i,j}=\left\{ x_{2i} e_{2i}+x_{2i+1}e_{2j+1}+x_je_j\,;\, x_{2i}, x_{2i+1}, x_j\in
\mathbb{R}\right\}$ of $\mathbb{R}^{n+1}.$ This space is the fixed point set of
the reflection $R: \mathbb{R}^{n+1}\rightarrow \mathbb{R}^{n+1}$
with $R(x_k)=-x_k, k\in\{2i,2i+1,j\}$ and $R(x_k)=x_k$ otherwise. 
Therefore the two-dimensional ellipsoid
$V_{i,j}\cap E\left(\mu,\lambda\right) $ is a totally geodesic submanifold.
The Gau{\ss} curvature along the ellipse $\gamma_i$ as a geodesic on this
two-dimensional ellipsoid satisfies (cf. Equation~\ref{eq:kc}):
\begin{equation}
\frac{1}{a_j^2} \, \frac{1}{\lambda^2}\le K\left(\gamma_i(t)\right)
\le \frac{1}{a_j^2} \,  \lambda^2\,;\, j \not\in\{2i,2i+1\}
 \end{equation}
with equality only at a discrete set of parameters.

From Lemma~\ref{lem:holonomy} we conclude
for the mean frequency $\ol{\alpha}_i$ of $\gamma_i:$
\begin{equation}
 \frac{1+\lambda}{\lambda}\sum_{0\le k\le m; k\not=i} \mu^{-k} < \malpha_i
<(1+\lambda)\,\lambda\, \sum_{0\le k\le m; k\not=i} \mu^{-k}\,.
\end{equation}
It follows from this estimate that for 
$1 <\lambda< 1+(\mu-1)^2\mu^{-m-2}$ 
the inequality
$$\malpha_{i}<\malpha_{i+1}$$
is satisfied for $i=1,2,\ldots,m.$
\end{proof}

\section{Appendix A}
\label{sec:more}
In this appendix we give proofs for the following estimates for
the difference $\deg(X)-\malpha \,\cri(X)$:
\begin{lemma}
\label{lem:resonant_iterates} 
{\rm \sc (Resonant Iterates)}\,
Let $\group=\mathbb{O}(2)$ or $S\mathbb{O}(2)$.
If $X\in H_{\ast}(\Lambda)$ lies hanging on a closed
geodesic $\gamma$ of length $L=\cri(X)$
in the sense that $X$
is in the image a class in of 
$H_{\ast}(\Lambda^{<L}\cup\group\cdot\gamma)$ whose image
in 
$H_{\ast}(\Lambda^{<L}\cup \group\cdot\gamma\,,\,\Lambda^{<L})$
is nontrivial (cf. Equation~\ref{eq:local_homology}), then 
\begin{equation}
-(n-1)\leq\mathrm{\deg}(X)-\overline{\alpha}(\gamma)\,\cri (X)\leq n\,.
\end{equation}
\end{lemma}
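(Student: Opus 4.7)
The plan is to combine a standard local Morse-Bott computation for a critical orbit with Bott's classical iteration inequalities. Write $L=\cri(X)$, $i=\ind(\gamma)$, $\nu=\nulli(\gamma)$ and $\alpha=\alpha_\gamma$, so that $\overline{\alpha}(\gamma)=\alpha/L$.

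First I would localize the degree of $X$. Since $\gamma$ is a closed geodesic of positive length, the orbit $\group\cdot\gamma$ is a non-degenerate critical submanifold of $F$ of dimension $1$. A standard negative-gradient flow / Morse-Bott neighborhood argument shows that the local homology
\[
H_{\ast}\bigl(\Lambda^{<L}\cup \group\cdot\gamma,\,\Lambda^{<L}\bigr)
\]
is concentrated in degrees
\[
i \;\le\; \ast \;\le\; i+\nu+1,
\]
where the lower bound is the Morse index and the upper bound comes from adding the nullity plus the dimension of the critical orbit. Since $X$ is in the image of a class in $H_{\ast}(\Lambda^{<L}\cup \group\cdot\gamma)$ whose image in the relative group is nontrivial, and since the inclusion-induced map cannot change the degree, we conclude
\[
i \;\le\; \deg(X) \;\le\; i+\nu+1.
\]

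Next I would invoke Bott's iteration inequalities for the closed geodesic $\gamma$ on the $n$-manifold $M$: the index, augmented index and average index satisfy
\[
\alpha-(n-1)\;\le\;i\qquad\text{and}\qquad i+\nu\;\le\;\alpha+(n-1).
\]
These follow from Bott's expression for $\alpha$ as the integral over the unit circle of the integer-valued Bott function $\Lambda$, combined with the facts that $\Lambda(1)=i$ and that the jump of $\Lambda$ at $1$ is bounded by $\nu$; the discrepancies between $\Lambda$ and its integral are each controlled by $n-1$ because the Poincar\'e map acts on a symplectic space of dimension $2(n-1)$.

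Finally I would chain the two sets of inequalities. On the one hand,
\[
\deg(X)\;\ge\;i\;\ge\;\alpha-(n-1)\;=\;\overline{\alpha}(\gamma)\,\cri(X)-(n-1),
\]
giving the lower bound $\deg(X)-\overline{\alpha}(\gamma)\cri(X)\ge -(n-1)$. On the other hand,
\[
\deg(X)\;\le\;i+\nu+1\;\le\;\alpha+(n-1)+1\;=\;\overline{\alpha}(\gamma)\,\cri(X)+n,
\]
giving $\deg(X)-\overline{\alpha}(\gamma)\cri(X)\le n$. The main subtlety is justifying the Morse-Bott range $[i,i+\nu+1]$ for the local homology of the orbit (as opposed to a point), and making sure the asymmetric bound $(n-1)$ versus $n$ is correctly accounted for by the extra $+1$ coming from $\dim(\group\cdot\gamma)=1$; once those are in hand, everything else is bookkeeping.
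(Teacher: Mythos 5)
Your proposal is correct and follows essentially the same route as the paper: the Morse--Bott localization $\ind(\gamma)\le\deg(X)\le\ind(\gamma)+\nulli(\gamma)+1$ together with the Bott iteration inequality $\alpha_\gamma-(n-1)\le\ind(\gamma)\le\alpha_\gamma+(n-1)-\nulli(\gamma)$ (which is exactly your pair $\alpha-(n-1)\le i$, $i+\nu\le\alpha+(n-1)$). The only difference is that the paper cites Long's book, Theorem 10.1.2, for the Bott estimate rather than sketching its derivation as you do.
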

\begin{proof}
By standard Morse theory estimates
\begin{equation}
\mathrm{ind}(\gamma)\leq\mathrm{deg}(X)\leq\mathrm{ind}(\gamma)+\mathrm{null}%
(\gamma)+1
\end{equation}
and $\mathrm{null}(\gamma)\leq2(n-1)$. We obtain the lemma using these and
the estimate

\begin{equation}
\label{eq:Lalpha}
L\overline{\alpha}(\gamma)-(n-1)\leq\mathrm{ind}(\gamma)\leq L\overline
{\alpha}(\gamma)+(n-1)-\mathrm{null}(\gamma)
\end{equation}
(cf.\cite[Theorem 10.1.2]{Lo})
\end{proof}
\begin{lemma}
\label{lem:X}
If the metric $g$ carries only finitely many
closed geodesics, then for $\deg(X)$ sufficiently
large, each homology class $X$  satisfies $\left|\deg(X)-\overline{\alpha
}\,\cri(X)\right|\leq n$.
\end{lemma}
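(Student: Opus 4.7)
The plan is to combine the global bound of the Resonance Theorem~\ref{thm:resonance} with the sharper local bound of the Resonant Iterates Lemma~\ref{lem:resonant_iterates}; finiteness will force the mean frequency appearing in the local bound to actually equal $\overline{\alpha}$ once $\deg(X)$ is large enough. To set up, let $\gamma_1,\ldots,\gamma_N$ be the prime closed geodesics of $g$, with mean frequencies $\overline{\alpha}_i:=\overline{\alpha}_{\gamma_i}$, and set
\[
\delta := \min\bigl\{\,|\overline{\alpha}_i-\overline{\alpha}|\,:\,\overline{\alpha}_i\neq\overline{\alpha}\,\bigr\}\in(0,+\infty],
\]
with $\delta=+\infty$ if every $\overline{\alpha}_i$ already equals $\overline{\alpha}$. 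Let $h$ be the constant furnished by the Resonance Theorem, so that every nontrivial class satisfies $|\deg(X)-\overline{\alpha}\,\cri(X)|\leq h$.

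Given a nontrivial $X$ with $L:=\cri(X)$, the next step is to pin $X$ to a single prime closed geodesic. Under the finiteness hypothesis the critical set of $F$ at level $L$ consists of finitely many isolated orbits $\group\cdot\gamma^{(1)},\ldots,\group\cdot\gamma^{(k)}$, each of the form $\gamma_{i(j)}^{m_j}$. Morse--Bott localization at isolated critical orbits gives
\[
H_*(\Lambda^{\leq L},\Lambda^{<L};G)\;\cong\;\bigoplus_{j=1}^{k} H_*\bigl(\Lambda^{<L}\cup\group\cdot\gamma^{(j)},\,\Lambda^{<L};\,G\bigr),
\]
and the $j$-th summand is concentrated in degrees in $[\,\ind(\gamma^{(j)}),\,\ind(\gamma^{(j)})+\nullity(\gamma^{(j)})+1\,]$. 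Since the image of $X$ in this relative group is nonzero in degree $\deg(X)$, some $j$ satisfies $\ind(\gamma^{(j)})\leq\deg(X)\leq\ind(\gamma^{(j)})+\nullity(\gamma^{(j)})+1$. Combining this with $\nullity(\gamma^{(j)})\leq 2(n-1)$ and Equation~\ref{eq:Lalpha}, exactly as in the proof of Lemma~\ref{lem:resonant_iterates}, yields
\[
|\deg(X)-\overline{\alpha}_{i(j)}\,\cri(X)|\leq n.
\]

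Chaining with the Resonance Theorem via the triangle inequality gives $|\overline{\alpha}_{i(j)}-\overline{\alpha}|\,\cri(X)\leq n+h$, so $\overline{\alpha}_{i(j)}\neq\overline{\alpha}$ forces $\cri(X)\leq(n+h)/\delta$. Since $\cri(X)\geq(\deg(X)-h)/\overline{\alpha}\to\infty$ as $\deg(X)\to\infty$, this is impossible once $\deg(X)$ exceeds an explicit threshold depending only on $n$, $h$, $\delta$, and $\overline{\alpha}$. Beyond that threshold we must have $\overline{\alpha}_{i(j)}=\overline{\alpha}$, and the bound from the second paragraph becomes $|\deg(X)-\overline{\alpha}\,\cri(X)|\leq n$, as claimed. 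The only step requiring real care is the Morse--Bott localization at each isolated critical orbit, which is precisely what the finiteness hypothesis delivers; the rest is an elementary quantitative comparison.
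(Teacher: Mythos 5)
Your proof is correct and takes essentially the same route as the paper: both combine the local bound of the Resonant Iterates Lemma with the global bound of the Resonance Theorem and use the finiteness hypothesis plus a triangle-inequality argument to force the local mean frequency $\overline{\alpha}_{i(j)}$ to equal $\overline{\alpha}$ once $\deg(X)$ is large. You simply make explicit a step the paper leaves implicit, namely that under the finiteness hypothesis Morse--Bott localization at isolated orbits attributes each class $X$ to some closed geodesic so that the Resonant Iterates Lemma applies.
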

\begin{proof}
By the {\em Resonant Iterates
Lemma}~\ref{lem:resonant_iterates}, 
for each $X$ there is a closed geodesic $\gamma$ of
mean frequency $\overline{\alpha}(\gamma)$ so that the point $(\cri(X),\deg X)$
lies at a vertical distance at most $n$ from the line
\[
d=\overline{\alpha}(\gamma)\ell.
\]
On the other hand by the {\em Resonance Theorem}~\ref{thm:resonance}, the point 
$\left(\cri(X),\deg X\right)$ lies at a bounded distance from the line
\[
d=\overline{\alpha}_{g}\ell.
\]
where $\overline{\alpha}_{g}$ is the global mean frequency. These can both be
true for $\deg X$ large only if the slopes are equal: $\overline{\alpha
}(\gamma)=\overline{\alpha}_{g}.$  Lemma~\ref{lem:X} then follows from another
application of the {\em Resonant Iterates Lemma}~\ref{lem:resonant_iterates}
\end{proof}
\section{Appendix B}
\label{sec:appendixB}
In this appendix we show the 
vanishing of the string bracket for spheres and rational Coefficients,
see \cite[p.186-187]{GH}. Let $M$ be compact and orientable of
dimension $n$. As in~\cite{CS}, one may consider the 
$T=S^{1}$-equivariant homology $H_{\ast }^{T}(\Lambda )$ of the free loop
space $\Lambda (M).$ Let $\rm{ET}\rightarrow \rm{BT}$ be the classifying space and
universal bundle for $T=S^{1}$; and let $\pi :\Lambda \times \rm{ET}\rightarrow
\Lambda _{T}=\Lambda \times _{T}\rm{ET}$ be the Borel construction.

There are maps back and forth
\begin{eqnarray*}
\pi _{\ast } &:&H_{k}(\Lambda )\rightarrow H_{k}^{T}(\Lambda ) \\
\pi ^{!} &:&H_{k}^{T}(\Lambda )\rightarrow H_{k+1}(\Lambda ) \\
\pi ^{\ast } &:&H_{T}^{k}(\Lambda ,\Lambda ^{0})\rightarrow H^{k}(\Lambda
,\Lambda ^{0}) \\
\pi _{!} &:&H^{k}(\Lambda ,\Lambda ^{0})\rightarrow H_{T}^{k-1}(\Lambda
,\Lambda ^{0})
\end{eqnarray*}%
We will use rational coefficients.  The Chas-Sullivan {\em
string bracket} product on equivariant homology is defined
using the Chas Sullivan product $\bullet $: \ if 
$X,Y\in H_{\ast}^{T}(\Lambda )$, then 
\beq
\label{eq:bracket}
\lbrack X,Y\rbrack=(-1)^{|X|}\pi _{\ast }\left( \pi ^{!}(X)\bullet \pi
^{!}(Y)\right) .
\eeq
Here $|X|=i-n$ if  $X\in H_{i}^{T}(\Lambda )$. \ The action of $T=S^{1}$
preserves the energy function, so the (homology) string bracket  satisfies
the same energy estimates~\ref{eq:XY} as the Chas Sullivan
product.

Similarly the (cohomology) product $\circledast $ gives rise to a product in
equivariant cohomology: 
\beq
\label{eq:circled}
x\circledcirc y=(-1)^{|x|}\pi _{!}\left( \pi ^{\ast }(x)\circledast \pi
^{\ast }(y)\right) .
\eeq
satisfying energy estimates as Equations~\ref{eq:XY}, with $|x|=i+n-1$ in
degree $i$. 
\begin{proposition}
\label{pro:vanishing}
For an $n$-dimensional sphere $M$ and for rational coefficients the string product
$\lbrack.,.\rbrack$ on equivariant homology, cf. Equation~\ref{eq:bracket}, and the
product $\circledcirc$ on equivariant cohomology, cf. Equation~\ref{eq:circled}
both vanish.
\end{proposition}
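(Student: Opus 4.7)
The plan is to show that $\pi^{!}(X)\bullet \pi^{!}(Y)$ always lies in $\ker(\pi_{*})$, whence $[X,Y]=\pm\pi_{*}(\pi^{!}X\bullet\pi^{!}Y)=0$, and then to run the analogous argument with $\pi^{*},\pi_{!},\circledast$ to kill $x\circledcirc y$.

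First I would invoke the Gysin long exact sequence for the principal $T$-fibration $\Lambda\times \mathrm{ET}\to \Lambda_{T}$. On rational homology this supplies the two identities $\pi_{*}\circ\pi^{!}=0$ and $\pi^{!}\circ\pi_{*}=\Delta$, where $\Delta$ is the BV operator from the $S^{1}$-action on $\Lambda$ used in Equation~\ref{eq:Delta}. Combining them, $\Delta\circ\pi^{!}=\pi^{!}\circ(\pi_{*}\circ\pi^{!})=0$, so $\operatorname{im}(\pi^{!})\subseteq\ker(\Delta)$; dually $\operatorname{im}(\Delta)=\operatorname{im}(\pi^{!}\circ\pi_{*})\subseteq\operatorname{im}(\pi^{!})$. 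Together with Gysin exactness $\operatorname{im}(\pi^{!})=\ker(\pi_{*})$ these yield the sandwich
\begin{equation*}
\operatorname{im}(\Delta)\ \subseteq\ \operatorname{im}(\pi^{!})=\ker(\pi_{*})\ \subseteq\ \ker(\Delta).
\end{equation*}

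Next I would use Lemma~\ref{lem:Cast}, which computes $\Delta$ on the generators of the Chas--Sullivan algebra. For $n$ odd, $H_{*}(\Lambda S^{n};\mathbb{Q})=\wedge(A)\otimes\mathbb{Q}[U]$ with $\Delta(AU^{m})=mU^{m-1}$ and $\Delta(U^{m})=\Delta(A)=0$. For $n$ even (rationally, the torsion relation $2A\Theta=0$ becomes $A\Theta=0$) the algebra reads $\mathbb{Q}[\Theta]\oplus W\cdot\mathbb{Q}[\Theta]\oplus\mathbb{Q}\langle A\rangle$ with $\Delta(W\Theta^{m})=(mk+1)\Theta^{m}$ and $mk+1\neq 0$ in $\mathbb{Q}$. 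In both cases $\ker(\Delta)/\operatorname{im}(\Delta)$ is one-dimensional, generated by $A\in H_{0}$. But $A\notin\operatorname{im}(\pi^{!})$ for degree reasons, since $\pi^{!}$ raises degree by $1$ and $H_{-1}^{T}(\Lambda;\mathbb{Q})=0$. The sandwich therefore collapses to $\operatorname{im}(\pi^{!})=\operatorname{im}(\Delta)$, which equals $\mathbb{Q}[U]$ for $n$ odd and $\mathbb{Q}[\Theta]$ for $n$ even. Each is a subalgebra of $(H_{*}(\Lambda;\mathbb{Q}),\bullet)$, so $\pi^{!}(X)\bullet\pi^{!}(Y)\in\operatorname{im}(\pi^{!})=\ker(\pi_{*})$, and $[X,Y]=0$.

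For $\circledcirc$ the dual argument uses the cohomology Gysin sequence, giving $\pi_{!}\circ\pi^{*}=0$ and $\pi^{*}\circ\pi_{!}=\Delta^{*}$, with the analogous sandwich on $H^{*}(\Lambda,\Lambda^{0};\mathbb{Q})$. The Kronecker pairing of Lemma~\ref{lem:B} (Equation~\ref{eq:9999}) identifies $\Delta^{*}$ with the adjoint of $\Delta$, so $\operatorname{im}(\pi^{*})$ again coincides with $\operatorname{im}(\Delta^{*})$, which is the polynomial subalgebra generated by $\omega$ and is closed under $\circledast$. Hence $\pi^{*}(x)\circledast\pi^{*}(y)\in\ker(\pi_{!})$ and $x\circledcirc y=0$. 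The main obstacle is the identification $\operatorname{im}(\pi^{!})=\operatorname{im}(\Delta)$: one must combine the Gysin identities with the explicit formulas of Lemma~\ref{lem:Cast} to rule out that the extra class $A\in\ker(\Delta)\setminus\operatorname{im}(\Delta)$ is captured by $\pi^{!}$; once this is in place, closure of the polynomial subalgebra under the loop product makes the vanishing transparent.
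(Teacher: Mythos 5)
Your approach is genuinely different from the paper's, and the comparison is instructive. The paper proves vanishing by pure parity counting: it cites (from Hingston's earlier work) that $H_T^k(\Lambda,\Lambda^0;\mathbb{Q})$ vanishes for $k$ even when $n$ is even and for $k$ odd when $n$ is odd, and similarly for $H_k^T(\Lambda;\mathbb{Q})$ modulo the basepoint contribution; since the products $\circledcirc$ and $[\,,\,]$ shift degree by $n-1$ and $-n$ respectively, they always land in the trivial parity. This is short and does not touch the $\Delta$ operator at all. Your route instead establishes the Gysin sandwich $\operatorname{im}(\Delta)\subseteq\operatorname{im}(\pi^!)=\ker(\pi_*)\subseteq\ker(\Delta)$, computes $\Delta$ from Lemma~\ref{lem:Cast}, and pins down $\operatorname{im}(\pi^!)$ exactly. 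If it worked it would yield the sharper structural fact that $\operatorname{im}(\pi^!)$ is a $\bullet$-subalgebra, which the paper's parity argument does not give.

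There is, however, a genuine gap in the $n$ even case. Your argument requires that $\ker(\Delta)/\operatorname{im}(\Delta)$ be concentrated in degree $0$, i.e.\ that $mk+1\ne 0$ for every $m\ge 0$, where $k=k(n)$ is the integer appearing in Lemma~\ref{lem:Cast}. You assert this but do not prove it, and the paper itself never determines the sign (or value) of $k$; indeed the proof of the Resonance Theorem only uses that $mk+1$ and $(m+1)k+1$ cannot both vanish mod $p$, which holds for \emph{any} $k$. If $k(n)=-1$ for some even $n$, then $\Delta(W\bullet\Theta)=0$, and $\operatorname{im}(\Delta)=\operatorname{span}\{\Theta^{\bullet m}:m\ne 1\}$ is not closed under $\bullet$ (since $\Theta^{\bullet 0}\bullet\Theta=\Theta$ falls outside it), so your step ``each is a subalgebra'' fails, and the sandwich no longer forces $\operatorname{im}(\pi^!)=\operatorname{im}(\Delta)$. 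To close the gap you would need either to compute $k(n)$ directly (from Menichi-style BV computations for even spheres, or from the Gysin sequence and equivariant Morse theory of the round metric) and verify $k\ne -1$, or to replace this part of the argument by the parity facts about $H_*^T(\Lambda;\mathbb{Q})$ that the paper uses. A secondary point: in the cohomology half you appeal to Equation~\ref{eq:9999} to identify $\Delta^*$ with the Kronecker adjoint of $\Delta$, but that pairing is between $H^*(\Lambda)$ and $H_*(\Lambda,\Lambda^0)$, whereas $\Delta$ lives on $H_*(\Lambda)$ and $\circledast$ on $H^*(\Lambda,\Lambda^0)$; the dualization needs a more careful statement than the one cited.
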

\begin{proof}
 First cohomology: By~\cite[4.2, p.104]{H} if $n$ is
even, $H_{T}^{k}(\Lambda ,\Lambda ^{0};\mathbb{Q})=0$ 
unless $k$ is odd;  if $n$ is odd, 
$H_{T}^{k}(\Lambda ,\Lambda ^{0};\mathbb{Q})=0$
unless $k$ is even. It follows from this and the definition (keeping
track of the degrees) that the string cohomology bracket is trivial. Next
homology: By if $n$ is odd, ~\cite[4.2, p.105]{H} 
$H_{k}^{T}(\Lambda, \mathbb{Q})=0$ 
unless $k$ is even and an argument similar to the above argument for
cohomology shows that the string homology bracket is trivial. 
According~\cite[p.143]{H2},
if $n$ is even, and $X\in H_{k}^{T}(\Lambda ;\mathbb{Q})$
is nontrivial, then  $k$ is odd \textit{or }$X$ is in the image of $%
H_{k}^{T}(\divideontimes ;\mathbb{Q})$ 
for a basepoint $\divideontimes \in \Lambda ^{0}$. \ But\ $\pi ^{!}$ is
trivial on  the image of $H_{\ast }^{T}(\divideontimes ;\mathbb{Q})$ 
(since $\pi ^{!}$ increases the degree by $1$, and $H_{\ast
}^{T}(\divideontimes ;
\mathbb{Q})\neq 0$ only in even degrees). Thus if $X$ is of even degree,  $[X,Y]=0$
for any $Y$.  The rest of the argument is the same as before. 
\end{proof}

\end{document}